\DeclareMathOperator*{\argmax}{argmax}
\DeclareMathOperator*{\argmin}{argmin}
\newtheorem{theorem}{Theorem}[section]
\newtheorem{lemma}[theorem]{Lemma}
\newtheorem{corollary}[theorem]{Corollary}
\newtheorem{proposition}[theorem]{Proposition}
\theoremstyle{definition}
\newtheorem{definition}[theorem]{Definition}
\newtheorem{assumption}{Assumption}
\theoremstyle{remark}
\newtheorem{remark}[theorem]{Remark}
\begin{document}
\title{Entropic mean-field min-max problems via Best Response flow}

\author{Razvan-Andrei Lascu}
\address{School of Mathematical and Computer Sciences, Heriot-Watt University, Edinburgh, UK, and Maxwell Institute for Mathematical Sciences, Edinburgh, UK}
\email{rl2029@hw.ac.uk}

\author{Mateusz B. Majka}
\address{School of Mathematical and Computer Sciences, Heriot-Watt University, Edinburgh, UK, and Maxwell Institute for Mathematical Sciences, Edinburgh, UK}
\email{m.majka@hw.ac.uk}

\author{\L ukasz Szpruch}
\address{School of Mathematics, University of Edinburgh, UK, and The Alan Turing Institute, UK and Simtopia, UK}
\email{l.szpruch@ed.ac.uk}

\keywords{Mean-field optimization, Mixed Nash equilibria, Entropy regularization, Convergence rates, Best Response, Fictitious Play}

\begin{abstract}
    We investigate the convergence properties of a continuous-time optimization method, the \textit{Mean-Field Best Response} flow, for solving convex-concave min-max games with entropy regularization. We introduce suitable Lyapunov functions to establish exponential convergence to the unique mixed Nash equilibrium. Additionally, we demonstrate the convergence of the fictitious play flow as a by-product of our analysis.
\end{abstract}

\maketitle
\section{Introduction}
Learning equilibria in min-max games has gained tremendous popularity motivated by the latest advances in machine learning (ML) such as Generative Adversarial Networks (GANs) \cite{NIPS2014_5ca3e9b1}, adversarial learning \cite{madry2018towards}, multi-agent reinforcement learning \cite{zhang2021multiagent} and fairness in machine learning \cite{DBLP:journals/corr/EdwardsS15}. More recently, several works, e.g., \cite{pmlr-v97-hsieh19b, domingo-enrich_mean-field_2021, wang2023exponentially, yulong, trillos2023adversarial, kim2024symmetric}, have successfully demonstrated that the problems of training GANs and adversarial robustness can be viewed through the lens of min-max games over the space of probability measures.

In this work, we are concerned with the continuous-time convergence analysis of the \textit{Mean-Field Best Response} (MF-BR) flow to the unique mixed Nash equilibrium of an entropy-regularized min-max game. As illustrated in \cite{Wibisono2016AVP}, studying the convergence of optimization methods from a continuous time perspective is very fruitful in ML as a guide for better understanding numerical approximations of implementable algorithms.
\subsection{Notation and setup}
Let $\mathcal{X}$, $\mathcal{Y}$ be any subsets of $\mathbb R^d$ (in particular, we allow $\mathcal{X} = \mathcal{Y} = \mathbb{R}^d$), and let $U^{\pi}: \mathcal{X} \to \mathbb{R}$, $U^{\rho}: \mathcal{Y} \to \mathbb{R}$ be two measurable functions such that $\int_{\mathcal{X}} e^{-U^{\pi}(x)} \mathrm{d}x = \int_{\mathcal{Y}} e^{-U^{\rho}(y)} \mathrm{d}y = 1$.\footnote[1]{We omit the normalizing constants $Z_{\pi}$ and $Z_{\rho}$ since we adopt the convention that the potential functions $U^{\pi}$ and $U^{\rho}$ are shifted by $\log Z_{\pi}$ and $\log Z_{\rho}$, respectively.} For any $\mathcal{Z} \subseteq \mathbb{R}^d$, by $\mathcal{P}_{\text{ac}}(\mathcal{Z})$ we denote the space of probability measures on $\mathcal{Z}$ which are absolutely continuous with respect to the Lebesgue measure. Following a standard convention, we use the same symbol to denote a probability measure in $\mathcal{P}_{\text{ac}}(\mathcal{Z})$ as well as its density. If $\pi(x) := e^{-U^{\pi}(x)}$ and $\rho(y) := e^{-U^{\rho}(y)}$, 
then the relative entropy $\operatorname{D_{KL}}(\cdot | \pi):\mathcal{P}_{\text{ac}}(\mathcal{X}) \to [0, \infty)$ with respect to $\pi$ is given for any $\nu \in \mathcal{P}_{\text{ac}}(\mathcal{X})$ by
\begin{equation*}
\operatorname{D_{KL}}(\nu|\pi) = \int_{\mathcal{X}} \log\left(\frac{\nu(x)}{\pi(x)}\right) \nu(x)\mathrm{d}x,
\end{equation*}
and we define $\operatorname{D_{KL}}(\mu|\rho)$ analogously for any $\mu \in \mathcal{P}_{\text{ac}}(\mathcal{Y})$. Let $F:\mathcal{P}\left(\mathcal{X}\right) \times \mathcal{P}\left(\mathcal{Y}\right) \to \mathbb{R}$ be a convex-concave (possibly non-linear) function and $\sigma > 0$ be a regularization parameter. The min-max problem we study is given by
    \begin{equation}
    \label{eq:F-nonlinear}
        \min_{\nu \in \mathcal{P}_{\text{ac}}\left(\mathcal{X}\right)}\max_{\mu \in \mathcal{P}_{\text{ac}}\left(\mathcal{Y}\right)} V^{\sigma}(\nu, \mu), \text{ with } V^{\sigma}(\nu, \mu) \coloneqq F(\nu, \mu) + \frac{\sigma^2}{2}\left(\operatorname{D_{KL}}(\nu|\pi)-\operatorname{D_{KL}}(\mu|\rho)\right).
    \end{equation}
In this setting, one is typically interested in searching for \textit{mixed Nash equilibria} (MNEs) \cite{neumann_morgenstern, nash}, which, given $\sigma > 0,$ are defined as pairs of measures $(\nu_{\sigma}^*, \mu_{\sigma}^*) \in \mathcal{P}_{\text{ac}}(\mathcal{X}) \times \mathcal{P}_{\text{ac}}(\mathcal{Y})$ that satisfy
\begin{equation}
\label{eq: nashdefmeasures}
    V^{\sigma}(\nu_{\sigma}^*, \mu) \leq V^{\sigma}(\nu_{\sigma}^*, \mu_{\sigma}^*) \leq V^{\sigma}(\nu, \mu_{\sigma}^*), \quad \text{for all } (\nu, \mu) \in \mathcal{P}(\mathcal{X}) \times \mathcal{P}(\mathcal{Y}).
\end{equation}
Note that when $F$ is bilinear and $\sigma = 0,$ i.e., when $V^0(\nu,\mu) = \int_{\mathcal{Y}} \int_{\mathcal{X}} f(x,y) \nu(\mathrm{d}x)\mu(\mathrm{d}y),$ for some $f:\mathcal{X} \times \mathcal{Y} \to \mathbb R,$ measures characterized by \eqref{eq: nashdefmeasures} are MNEs in the classical sense of two-player zero-sum games. 

We establish the existence of MNEs for \eqref{eq:F-nonlinear} in Theorem \ref{thm: 2.6} in Section \ref{appendix: AppC}. Since $\nu \mapsto F(\nu, \mu)$ and $\mu \mapsto F(\nu, \mu)$ are convex and concave (see Assumption \ref{assumption: assump-F-conv-conc}), respectively, Lemma \ref{lemma:Uniqueness-saddle-point} in Section \ref{appendix: AppC} guarantees uniqueness of the MNE of game \eqref{eq:F-nonlinear}. In Proposition \ref{prop:gamma-convergence}, we show that $V^{\sigma}(\nu_{\sigma}^*, \mu_{\sigma}^*)$ converges to $F(\nu^*, \mu^*)$ as $\sigma \to 0,$ where $(\nu^*, \mu^*) \in \mathcal{P}(\mathcal{X}) \times \mathcal{P}(\mathcal{Y})$ is an MNE for game \eqref{eq:F-nonlinear} without regularization. We would like to stress that Proposition \ref{prop:gamma-convergence} does not imply the $\Gamma$-convergence of $V^{\sigma}$ to $F$ as $\sigma\to 0,$ and hence the claim about the $\Gamma$-convergence made in \cite{lascu_fisher-rao_2024} on page $2$ in reference to the present paper is not true.

In what follows, we will introduce the MF-BR on the space $\left(\mathcal{P}_{\text{ac}}(\mathcal{X}) \times \mathcal{P}_{\text{ac}}(\mathcal{Y}), \operatorname{TV}\right),$ where $\operatorname{TV}$ denotes the Total Variation (see Definition \ref{def:KRwasserstein} in Section \ref{app: AppB}). 

\subsection{Mean-field best response dynamics}
\label{subsec: MF-BR}
Best response (BR) is a learning algorithm initially proposed in \cite{10.2307/2938230, RePEc:eee:jetheo:v:57:y:1992:i:2:p:343-362,hofbauer_stability_1995} for games on $\mathbb R^d$ (i.e., with finite dimensional sets of strategies) with the purpose of evaluating the payoff function of two-player zero-sum games at the Nash equilibrium. In this learning process, at each round of the game, each player plays their best response against the \textit{current} strategies of the other players. The convergence analysis of BR both in the discrete and continuous time setup has been studied in detail for games on $\mathbb R^d$; see e.g. \cite{HARRIS1998238, Hofbauer}. In the present paper, we introduce the Mean-Field Best Response (MF-BR) flow, which is an infinite-dimensional counterpart of the classical BR algorithm.

In order to motivate the introduction of the MF-BR gradient flow, we start by observing that, given $\sigma > 0,$ the MNE $(\nu_{\sigma}^*, \mu_{\sigma}^*)$ of \eqref{eq:F-nonlinear} solves 
\begin{equation}
\label{eq: mne-argmin-argmax}
    \begin{cases}
        \nu_{\sigma}^* = \argmin_{\nu \in \mathcal{P}_{\text{ac}}(\mathcal{X})} \left\{F(\nu, \mu_{\sigma}^*) + \frac{\sigma^2}{2}\operatorname{D_{KL}}(\nu|\pi)\right\},\\
        \mu_{\sigma}^* = \argmax_{\mu \in \mathcal{P}_{\text{ac}}(\mathcal{Y})} \left\{F(\nu_{\sigma}^*, \mu) - \frac{\sigma^2}{2}\operatorname{D_{KL}}(\mu|\rho)\right\}.
    \end{cases}
\end{equation}
According to Proposition \ref{prop: 2.5} in Section \ref{appendix: AppC}, which characterizes $(\nu_{\sigma}^*, \mu_{\sigma}^*)$ via a first-order condition, we have that the MNE $(\nu_{\sigma}^*, \mu_{\sigma}^*)$ satisfying \eqref{eq: mne-argmin-argmax} is given implicitly by the equations
\begin{equation}
\label{eq: nu-implicit}
\nu_{\sigma}^*(x) = \frac{1}{Z(\nu_{\sigma}^*, \mu_{\sigma}^*)} \exp{\left( -\frac{2}{\sigma^2}\frac{\delta F}{\delta \nu} (\nu_{\sigma}^*, \mu_{\sigma}^*, x) - U^{\pi}(x) \right)},
\end{equation}
\begin{equation}
\label{eq: mu-implicit}
    \mu_{\sigma}^*(y) = \frac{1}{Z'(\nu_{\sigma}^*,\mu_{\sigma}^*)} \exp{\left( \frac{2}{\sigma^2}\frac{\delta F}{\delta \mu} (\nu_{\sigma}^*, \mu_{\sigma}^*, y) - U^{\rho}(y) \right)},
\end{equation}
where $Z(\nu_{\sigma}^*, \mu_{\sigma}^*)$ and $Z'(\nu_{\sigma}^*, \mu_{\sigma}^*)$ are normalizing constants, and $\frac{\delta F}{\delta \nu}, \frac{\delta F}{\delta \mu}$ are \emph{flat derivatives} of $F$ (see Definition \ref{def:fderivative} in Section \ref{app: AppB}). The key idea for defining the mean-field BR flow is to show that the MNE, for which the equations \eqref{eq: nu-implicit} and \eqref{eq: mu-implicit} hold, satisfies a fixed-point problem. For $\sigma > 0,$ we define $\Psi_{\sigma}: \mathcal{P}_{\text{ac}}(\mathcal{X}) \times \mathcal{P}_{\text{ac}}(\mathcal{Y}) \to \mathcal{P}_{\text{ac}}(\mathcal{X})$ and $\Phi_{\sigma}: \mathcal{P}_{\text{ac}}(\mathcal{X}) \times \mathcal{P}_{\text{ac}}(\mathcal{Y}) \to \mathcal{P}_{\text{ac}}(\mathcal{Y})$ by
\begin{equation}
\label{eq:argminpsi-density}
    \Psi_{\sigma}(\nu, \mu)(x) = \frac{1}{Z(\nu, \mu)}\exp\left({-\frac{2}{\sigma^2}\frac{\delta F}{\delta \nu}(\nu, \mu, x) - U^{\pi}(x)}\right),
\end{equation}
\begin{equation}
\label{eq:argmaxphi-density}
    \Phi_{\sigma}(\nu, \mu)(y) = \frac{1}{Z'(\nu, \mu)}\exp\left({\frac{2}{\sigma^2}\frac{\delta F}{\delta \mu}(\nu, \mu, y) - U^{\rho}(y)}\right), 
\end{equation}
for all $(x,y) \in \mathcal{X} \times \mathcal{Y}$ Lebesgue almost surely, and where $Z(\nu,\mu)$ and $Z'(\nu,\mu)$ are normalizing constants depending on $\nu$ and $\mu$. Observe that the MNE $(\nu_{\sigma}^*, \mu_{\sigma}^*)$ satisfies \eqref{eq: nu-implicit} and \eqref{eq: mu-implicit} if and only if it is also a solution to the fixed point problem
\begin{equation}
\label{eq:fixed-point-problem2}
\begin{cases}
    \nu(x) = \Psi_{\sigma}(\nu, \mu)(x)\\
    \mu(y) = \Phi_{\sigma}(\nu, \mu)(y).
\end{cases}
\end{equation}
\begin{remark}
    We note that, for given $(\nu, \mu) \in \mathcal{P}(\mathcal{X}) \times \mathcal{P}(\mathcal{Y})$ and $\sigma > 0,$ the maps $\Psi_{\sigma}$ and $\Phi_{\sigma}$ satisfy the following variational representation:
\begin{equation}
\label{eq:psi-linear-F}
    \Psi_{\sigma}(\nu, \mu) = \argmin_{\nu' \in \mathcal{P}_{\text{ac}}(\mathcal{X})} \left\{\int_{\mathcal{X}} \frac{\delta F}{\delta \nu}(\nu, \mu, x)(\nu'-\nu)(\mathrm{d}x) + \frac{\sigma^2}{2}\operatorname{D_{KL}}(\nu'|\pi)\right\},
\end{equation}
\begin{equation}
\label{eq:phi-linear-F}
    \Phi_{\sigma}(\nu, \mu) = \argmax_{\mu' \in \mathcal{P}_{\text{ac}}(\mathcal{Y})} \left\{\int_{\mathcal{Y}} \frac{\delta F}{\delta \mu}(\nu, \mu, y)(\mu'-\mu)(\mathrm{d}y) - \frac{\sigma^2}{2}\operatorname{D_{KL}}(\mu'|\rho)\right\}.
\end{equation}
It is essential to stress that since $\Psi_{\sigma}$ and $\Phi_{\sigma}$ are the minimizer and maximizer (i.e., \textit{best responses}) of entropy-regularized linearizations of $F$ we can define $\Psi_{\sigma}$ and $\Phi_{\sigma}$ explicitly as in \eqref{eq:argminpsi-density} and \eqref{eq:argmaxphi-density}. Otherwise, if we would consider $\Psi_{\sigma}$ and $\Phi_{\sigma}$ to be the minimizer and maximizer of $V^{\sigma},$ then due to the non-linearity of $F,$ expressions \eqref{eq:argminpsi-density} and \eqref{eq:argmaxphi-density} would become implicit. 
\end{remark}

Let $(\nu_t)_{t \in [0, \infty)} \subset \mathcal{P}_{\text{ac}}(\mathcal{X})$ and $(\mu_t)_{t \in [0, \infty)} \subset \mathcal{P}_{\text{ac}}(\mathcal{Y})$ denote the strategies of each player. Then, 
since finding the unique MNE of \eqref{eq:F-nonlinear} is equivalent to finding the unique fixed point which solves \eqref{eq:fixed-point-problem2}, it is natural that the pair of strategies $(\nu_t, \mu_t)_{t \geq 0}$ evolves on $\left(\mathcal{P}_{\text{ac}}(\mathcal{X}) \times \mathcal{P}_{\text{ac}}(\mathcal{Y}), \operatorname{TV}\right)$ along the flow given by 
\begin{equation}
\label{eq: mean-field-br}
\begin{cases}
    \mathrm{d}\nu_t(x) = \alpha\left(\Psi_{\sigma}(\nu_t, \mu_t)(x) - \nu_t(x) \right)\mathrm{d}t,\\
    \mathrm{d}\mu_t(y) = \alpha\left(\Phi_{\sigma}(\nu_t, \mu_t)(y) - \mu_t(y) \right)\mathrm{d}t, \quad t \geq 0,
\end{cases}
\end{equation}
for some initial condition $(\nu_0, \mu_0) \in \mathcal{P}_{\text{ac}}(\mathcal{X}) \times \mathcal{P}_{\text{ac}}(\mathcal{Y})$, and a parameter (learning rate) $\alpha > 0$. Note that a similar algorithm has been studied by \cite{https://doi.org/10.48550/arxiv.2202.05841, nitanda_fictitious} in the context of a different class of optimization problems on $\left(\mathcal{P}_{p}(\mathbb R^d), \mathcal{W}_p\right)$.

Since the only solution to the fixed point problem \eqref{eq:fixed-point-problem2} is automatically the unique MNE of the game, it follows that \eqref{eq:fixed-point-problem2} gives a strong indication for considering the map
\begin{equation*}
    t \mapsto \operatorname{D_{KL}}(\nu_t|\Psi_{\sigma}(\nu_t,\mu_t)) + \operatorname{D_{KL}}(\mu_t|\Phi_{\sigma}(\nu_t,\mu_t))
\end{equation*} 
as a suitable Lyapunov function in the subsequent convergence analysis of the MF-BR dynamics. Indeed, it holds that $\operatorname{D_{KL}}(\nu|\Psi_{\sigma}(\nu,\mu)) + \operatorname{D_{KL}}(\mu|\Phi_{\sigma}(\nu,\mu)) \geq 0,$ for all $(\nu, \mu) \in \mathcal{P}_{\text{ac}}(\mathcal{X}) \times \mathcal{P}_{\text{ac}}(\mathcal{Y})$, with equality if and only if the fixed point problem \eqref{eq:fixed-point-problem2} is satisfied. Hence, if we can show that $t \mapsto \operatorname{D_{KL}}(\nu_t|\Psi_{\sigma}(\nu_t,\mu_t)) + \operatorname{D_{KL}}(\mu_t|\Phi_{\sigma}(\nu_t,\mu_t))$ converges to zero as $t \to \infty$, then we know that the unique MNE of \eqref{eq:F-nonlinear} has been attained.

Another appropriate Lyapunov function, especially in the case of the best response dynamics in games (as demonstrated in the context of games on $\mathbb{R}^d$ in e.g. \cite{HARRIS1998238, Hofbauer}), is the so-called Nikaidò-Isoda (NI) error \cite{Nikaid1955NoteON}, which, for all $(\nu, \mu) \in \mathcal{P}\left(\mathcal{X}\right) \times \mathcal{P}\left(\mathcal{Y}\right)$, can be defined as
\begin{equation*}
    \text{NI}(\nu,\mu) \coloneqq \max_{\mu' \in \mathcal{P}\left(\mathcal{Y}\right)} V^{\sigma}(\nu, \mu') - \min_{\nu' \in \mathcal{P}\left(\mathcal{X}\right)} V^{\sigma}(\nu', \mu).
\end{equation*}
From the saddle point condition \eqref{eq: nashdefmeasures}, it follows that $\text{NI}(\nu,\mu) \geq 0$ and $\text{NI}(\nu,\mu) = 0$ if and only if $(\nu, \mu)$ is a MNE. Therefore, if we prove that $t \mapsto \operatorname{NI}(\nu_t, \mu_t)$ converges to zero as $t \to \infty$, then we have precisely shown convergence of the flow to the unique MNE of \eqref{eq:F-nonlinear}. Consequently, given that $\operatorname{NI}(\nu_t, \mu_t) \to 0$ as $t \to \infty,$ we immediately obtain $\operatorname{D_{KL}}({\nu_t}|\nu_{\sigma}^*) + \operatorname{D_{KL}}({\mu_t}|\mu_{\sigma}^*) \to 0$ as $t \to \infty$ due to Lemma \ref{lemma: NI-equals-KL}, and hence $\operatorname{TV}^2({\nu_t},\nu_{\sigma}^*) + \operatorname{TV}^2({\mu_t},\mu_{\sigma}^*) \to 0$ as $t \to \infty$ due to Pinsker's inequality.

\subsubsection{Sketch of convergence proof for the MF-BR flow}
Our convergence result for the MF-BR flow extends the work \cite{https://doi.org/10.48550/arxiv.2202.05841} from the case of a single-player optimization problem to the class of games \eqref{eq:F-nonlinear}, which requires novel Lyapunov functions. 
We also work with the Total Variation instead of the Wasserstein distance, which allows for less regularity of $F$. For any $m,m' \in \mathcal{P}(\mathcal{M})$, with $\mathcal{M} \subseteq \mathbb R^d$, let $\operatorname{D_J}(m,m') \coloneqq \operatorname{D_{KL}}(m|m') + \operatorname{D_{KL}}(m'|m)$ denote Jeffreys divergence \cite{jeffreys} between $m$ and $m'$. Assuming the existence of the flow $(\nu_t, \mu_t)_{t \geq 0}$ satisfying \eqref{eq: mean-field-br}, and the differentiability of the map \(t \mapsto \operatorname{D_{KL}}({\nu_t}|{\Psi_{\sigma}(\nu_t,\mu_t)}) + \operatorname{D_{KL}}({\mu_t}|{\Phi_{\sigma}(\nu_t,\mu_t)})\) for all \(t > 0\), which will be established in Proposition \ref{prop:Existence-flows-FP} and Theorem \ref{thm:exponential-kl}, we can show that
\begin{equation*}
    \frac{\mathrm{d}}{\mathrm{d}t} \left(\operatorname{D_{KL}}({\nu_t}|{\Psi_{\sigma}(\nu_t,\mu_t)}) + \operatorname{D_{KL}}({\mu_t}|{\Phi_{\sigma}(\nu_t,\mu_t)})\right) \leq -\alpha \big(\operatorname{D_J}(\nu_t|\Psi_{\sigma}(\nu_t, \mu_t)) + \operatorname{D_J}(\mu_t|\Phi_{\sigma}(\nu_t, \mu_t))\big).
\end{equation*}
Applying Gronwall's inequality gives 
\begin{equation*}
        \operatorname{D_{KL}}({\nu_t}|{\Psi_{\sigma}(\nu_t,\mu_t)}) + \operatorname{D_{KL}}({\mu_t}|{\Phi_{\sigma}(\nu_t,\mu_t)}) \leq e^{- \alpha t}\left(\operatorname{D_{KL}}({\nu_0}|{\Psi_{\sigma}(\nu_0,\mu_0)}) + \operatorname{D_{KL}}({\mu_0}|{\Phi_{\sigma}(\nu_0,\mu_0)})\right).
\end{equation*}
Using Lemma \ref{lemma: NI-equals-KL} from Section \ref{appendix: AppC}, that is
\begin{equation}
\label{eq:NI-bounded-above}
    \frac{\sigma^2}{2}\left(\operatorname{D_{KL}}({\nu_t}|\nu_{\sigma}^*) + \operatorname{D_{KL}}({\mu_t}|\mu_{\sigma}^*)\right) \leq \operatorname{NI}(\nu_t, \mu_t) \leq \frac{\sigma^2}{2}\left(\operatorname{D_{KL}}({\nu_t}|{\Psi_{\sigma}(\nu_t,\mu_t)}) + \operatorname{D_{KL}}({\mu_t}|{\Phi_{\sigma}(\nu_t,\mu_t)})\right),
\end{equation} 
we obtain that
\begin{equation*}
         \operatorname{NI}({\nu_t}, {\mu_t}) \leq \frac{\sigma^2}{2}e^{-\alpha t}\left(\operatorname{D_{KL}}({\nu_0}|{\Psi_{\sigma}(\nu_0,\mu_0)}) + \operatorname{D_{KL}}({\mu_0}|{\Phi_{\sigma}(\nu_0,\mu_0)})\right),
\end{equation*}
and consequently
\begin{equation*}
   \operatorname{D_{KL}}({\nu_t}|\nu_{\sigma}^*) + \operatorname{D_{KL}}({\mu_t}|\mu_{\sigma}^*) \leq e^{-\alpha t}\left(\operatorname{D_{KL}}({\nu_0}|{\Psi_{\sigma}(\nu_0,\mu_0)}) + \operatorname{D_{KL}}({\mu_0}|{\Phi_{\sigma}(\nu_0,\mu_0)})\right),
\end{equation*}
\begin{equation*}
    \operatorname{TV}^2({\nu_t},\nu_{\sigma}^*) + \operatorname{TV}^2({\mu_t},\mu_{\sigma}^*) \leq \frac{1}{2}e^{-\alpha t}\left(\operatorname{D_{KL}}({\nu_0}|{\Psi_{\sigma}(\nu_0,\mu_0)}) + \operatorname{D_{KL}}({\mu_0}|{\Phi_{\sigma}(\nu_0,\mu_0)})\right).
\end{equation*}
\subsection{Our contribution}
\label{sec: suitable-lyapunov}
We prove the existence of the MF-BR flow $(\nu_t, \mu_t)_{t \geq 0}$ and, independently of initialization, we prove its convergence with rate $\mathcal{O}\left(e^{-\alpha t}\right)$ to the unique MNE of \eqref{eq:F-nonlinear} via the Lyapunov function $t \mapsto \operatorname{D_{KL}}(\nu_t|\Psi_{\sigma}(\nu_t,\mu_t)) + \operatorname{D_{KL}}(\mu_t|\Phi_{\sigma}(\nu_t,\mu_t)).$ Consequently, using \eqref{eq:NI-bounded-above}, we obtain convergence of the MF-BR flow with rate $\mathcal{O}\left(\frac{\sigma^2}{2}e^{-\alpha t}\right)$ with respect to $t \mapsto \operatorname{NI}(\nu_t, \mu_t),$ and with rate $\mathcal{O}\left(e^{-\alpha t}\right)$ with respect to $t \mapsto \operatorname{D_{KL}}({\nu_t}|\nu_{\sigma}^*) + \operatorname{D_{KL}}({\mu_t}|\mu_{\sigma}^*)$ and $t \mapsto \operatorname{TV}^2({\nu_t},\nu_{\sigma}^*) + \operatorname{TV}^2({\mu_t},\mu_{\sigma}^*).$ We show that for games with $F(\nu, \mu) = \int_{\mathcal{Y}}\int_{\mathcal{X}} f(x,y) \nu(\mathrm{d}x) \mu(\mathrm{d}y)$, where $f:\mathcal{X} \times \mathcal{Y} \to \mathbb{R}$ is bounded, the convergence rate with respect to $t \mapsto \operatorname{NI}(\nu_t, \mu_t)$ becomes $\mathcal{O}\left(e^{-\alpha t}\right)$ (and is hence independent of the regularization parameter $\sigma$).

\subsection{Related works}
\label{subsec:LRintro}
\subsubsection{Best response and fictitious play dynamics}
\label{sec: review-fict-play}
For the class of two-player zero-sum games with payoff function $\mathbb R^{d_1} \times \mathbb R^{d_2} \ni (x,y) \mapsto x^T A y \in \mathbb R,$ where $x,y$ denote the strategies of the players and $A \in \mathbb R^{d_1 \times d_2}$ denotes the payoff matrix, and assuming that the game may have multiple Nash equilibria, \cite[Theorem $9$]{HARRIS1998238} establishes that continuous-time BR converges to the set of Nash equilibria with exponential rate $e^{-t}$ along the Nikaidò-Isoda (NI) error \cite{Nikaid1955NoteON}. Later, assuming that the payoff function of the game is continuous convex-concave, that the strategy spaces are compact and convex and that the game may have multiple Nash equilibria, \cite{Hofbauer} proves that continuous-time BR converges to the set of Nash equilibria with rate $e^{-t}$ along the NI error. 

In contrast to \cite{HARRIS1998238} and \cite{Hofbauer}, we consider an infinite-dimensional two-player zero-sum game on the space of probability measures. In our setting, the strategy spaces can be any subsets of $\mathbb R^d$, not necessarily compact and convex. An argument from \cite{HARRIS1998238}, which was later formalized in \cite{Hofbauer}, showed that continuous-time BR and continuous-time fictitious play (see e.g. \cite{brown:fp1951, HARRIS1998238, Ostrovski} for details on the fictitious play algorithm) are in fact equivalent up to a rescale in time (see Remark \ref{remark: fp-equiv-br}).

More recently, there has been interest in the convergence analysis of continuous-time fictitious play, continuous-time BR and their discrete-time counterparts in the context of zero-sum stochastic games; see e.g. \cite{https://doi.org/10.48550/arxiv.2010.04223, LESLIE2020105095, pmlr-v162-baudin22a}, and mean-field games; see e.g. \cite{CH17, HS19, PPLGEP20, BLP21}. In this context, we would like to point out that, after the first version of the present paper appeared on ArXiv, \cite{kim2024symmetric} extended this work and proposed a particle algorithm, consisting of two nested loops, which implements the MF-BR flow \eqref{eq: mean-field-br} with theoretical convergence guarantees. The inner loop computes the best responses $\Psi_{\sigma}, \Phi_{\sigma}$ via Langevin dynamics, while the outer loop updates \eqref{eq: mean-field-br} via an explicit Euler scheme.

\subsubsection{Wasserstein and Fisher-Rao gradient flows for games}
Recently, there has been intensive research in analyzing the convergence of various types of gradient flows to the set of MNEs in a particular setup of game \eqref{eq:F-nonlinear} in which $F$ is bilinear, that is $F(\nu, \mu) = \int_{\mathcal{Y}}\int_{\mathcal{X}} f(x,y) \nu(\mathrm{d}x) \mu(\mathrm{d}y)$, regularized by the entropy instead of the relative entropy $\operatorname{D_{KL}}$, 
and where $\mathcal{X}$ and $\mathcal{Y}$ are compact smooth manifolds without boundary, 
embedded in the Euclidean space or they are Euclidean tori, and $f$ has sufficient regularity, i.e., it is at least continuously differentiable and $\nabla_x f, \nabla_y f$ satisfy Lipschitz conditions, 
see e.g. \cite{domingo-enrich_mean-field_2021, ma2022provably, yulong, wang2023exponentially}.

In this particular setting, \cite{ma2022provably,yulong} study the convergence of the \textit{Wasserstein gradient flow} and obtain exponential convergence to the MNE in the case where the flows of the players convergence at different speeds. In \cite{ma2022provably} the speeds of convergence of the flows $(\nu_t)_{t \geq 0}$ and $(\mu_t)_{t \geq 0}$ are assumed to be different in the sense that one of the flows has achieved equilibrium while the other one is still governed by the Wasserstein gradient flow equation. \cite[Theorem $5$]{ma2022provably} states that under these separated dynamics, the flow $(\nu_t,\mu_t)_{t \geq 0}$ converges (without explicit rate) to the unique MNE of the game. 

In contrast to \cite{ma2022provably}, \cite{yulong} 
proved exponential convergence of the Wasserstein gradient flow with respect to Lyapunov functions adapted from \cite{doan2021convergence}. The proof of \cite{yulong} relies on defining operators similar to our $\Psi_{\sigma}, \Phi_{\sigma},$ and assuming that these satisfy the log-Sobolev inequality. Furthermore, \cite{yulong} introduced a finite time-scale separation parameter $\eta > 0$ at the level of the Wasserstein gradient flow, so that players' strategies evolve at different speeds along the flow but none of them is at equilibrium. \cite[Theorem $2.1$]{yulong} proved that if $\eta$ depends on the log-Sobolev constant and the regularization parameter, then the timescale separated Wasserstein gradient flow converges exponentially to the MNE of the game. The rate of convergence also depends on the log-Sobolev constant and on the regularization parameter. On the contrary, we do not impose any assumptions on $\Psi_{\sigma}, \Phi_{\sigma}$ and the exponential convergence rate of the MF-BR flow we propose only depends on the learning rate $\alpha> 0,$ which can be chosen arbitrarily.

However, in discrete-time there is a trade-off in the choice of $\alpha$ and the step-size of the discretization scheme. As mentioned in Subsection \ref{sec: review-fict-play}, \cite{kim2024symmetric} considered the time discretization of the MF-BR flow \eqref{eq: mean-field-br} via an explicit Euler scheme with step-size $\tau > 0$ and learning rate $\alpha > 0$ (see Subsection 4.1 in \cite{kim2024symmetric}). In this case, \cite[Theorem $4.1$]{kim2024symmetric} shows that the number of iterations required for the iterates $(\nu_k, \mu_k)_{k \geq 0}$ generated by the explicit Euler scheme to converge to the MNE $(\nu_{\sigma}^*, \mu_{\sigma}^*)$ of \eqref{eq:F-nonlinear} in NI with accuracy $\epsilon > 0$ is $k = \mathcal{O}\left(\frac{1}{\epsilon}\log\frac{1}{\epsilon}\right).$ This can be achieved by taking $\tau = \mathcal{O}\left(\log \frac{1}{\epsilon}\right)$ and $\alpha = \mathcal{O}(\epsilon).$ In other words, convergence with high accuracy (small $\epsilon$) of the iterates $(\nu_k, \mu_k)_{k \geq 0}$ to $(\nu_{\sigma}^*, \mu_{\sigma}^*)$ in NI can be attained with a large $\tau$ at the expense of a small $\alpha.$ 

Under the same setup in which $F$ is bilinear but with $\sigma = 0$, \cite{domingo-enrich_mean-field_2021} studied the convergence of the \textit{Wasserstein-Fisher-Rao gradient flow} without explicit convergence rates. For $t_0 > 0$ (depending on parameters of the individual contributions of the Wasserstein and the Fisher-Rao components in the WFR flow), and when the Fisher-Rao component dominates the Wasserstein component of the WFR flow, \cite[Theorem $2$]{domingo-enrich_mean-field_2021} shows that the pair $\left(\frac{1}{t_0}\int_{0}^{t_0} \nu_s \mathrm{d}s, \frac{1}{t_0}\int_{0}^{t_0} \mu_s \mathrm{d}s\right)$ is an $\epsilon$-approximate MNE of the game, i.e., $\operatorname{NI}\left(\frac{1}{t_0}\int_{0}^{t_0} \nu_s \mathrm{d}s, \frac{1}{t_0}\int_{0}^{t_0} \mu_s \mathrm{d}s\right) \leq \epsilon$ with $\epsilon > 0$ arbitrary. 

In \cite{wang2023exponentially}, the discrete-time convergence of the WFR flow is considered in the case where $F$ is bilinear, $\sigma=0$ and the MNE of the game is unique. Requiring that the flow is initialized sufficiently close to the MNE, \cite[Theorem $2.2$]{wang2023exponentially} shows local exponential convergence with respect to the NI error and the WFR distance to the unique MNE of the game. 

Lately, \cite{lascu_fisher-rao_2024} studied the convergence of a Fisher-Rao (FR) gradient flow to the MNE of \eqref{eq:F-nonlinear}. Both the MF-BR and FR dynamics converge exponentially to the MNE but with rates which differ significantly in terms of $\sigma.$ The rate for MF-BR with respect to the map $t \mapsto \operatorname{D_{KL}}(\nu_t|\Psi(\nu_t,\mu_t)) + \operatorname{D_{KL}}(\mu_t|\Phi(\nu_t,\mu_t))$ is independent of $\sigma$ (and with respect to $t \mapsto \operatorname{NI}(\nu_t, \mu_t)$ the rate degenerates quadratically fast with $\sigma \to 0$), while for FR the rate degenerates exponentially fast with $\sigma \to 0$ (\cite[Theorem $2.3$]{lascu_fisher-rao_2024}). Another important aspect to compare is the assumptions used for both dynamics. While the results for both flows rely on fairly standard assumptions such as convexity-concavity of $F$ (see Assumption \ref{assumption: assump-F-conv-conc}) and boundedness of first and second order flat derivatives of $F$ (see Assumption \ref{assumption: boundedness-first-flat} and \ref{assump:F}), it is worth noting that the FR gradient flow needs an additional assumption (\cite[Assumption $4$]{lascu_fisher-rao_2024}) about the comparability of the initial condition $(\nu_0,\mu_0)$ to the reference measures $\pi$ and $\rho.$ This is a ``warm start'' condition typically needed for birth-death flows (see the discussions in \cite{lu_accelerating_2019, https://doi.org/10.48550/arxiv.2206.02774, lascu_fisher-rao_2024}). On the other hand, the initial condition $(\nu_0,\mu_0)$ in our analysis of the MF-BR flow can be an arbitrary pair of measures in $\mathcal{P}_{\text{ac}}(\mathcal{X}) \times \mathcal{P}_{\text{ac}}(\mathcal{Y}).$

\section{Main results}
\label{sec:Main-Results}

As we explained in the introduction, we study the convergence of the MF-BR and FR dynamics to the unique MNE of the entropy-regularized two-player zero-sum game given by \eqref{eq:F-nonlinear},
where $F:\mathcal{P}\left(\mathcal{X}\right) \times \mathcal{P}\left(\mathcal{Y}\right) \to \mathbb{R}$ is a non-linear function and $\sigma > 0.$ Throughout the paper, we have the following assumptions on $F$.

\begin{assumption}[Convexity-concavity of $F$]
\label{assumption: assump-F-conv-conc}
	Suppose $F$ admits first order flat derivatives with respect to both $\nu$ and $\mu$ as stated in Definition \ref{def:fderivative}.  
    Furthermore, suppose that $F$ is convex in $\nu$ and concave in $\mu$, i.e., for any $\nu, \nu' \in \mathcal{P}\left(\mathcal{X}\right)$ and any $\mu, \mu' \in \mathcal{P}\left(\mathcal{Y}\right)$, we have
		\begin{equation}
        \label{eq:convexF}
		F(\nu', \mu) - F(\nu, \mu) \geq \int_{\mathcal{X}} \frac{\delta F}{\delta \nu}(\nu,\mu,x) (\nu'-\nu)(\mathrm{d}x), 
		\end{equation}
        \begin{equation}
        \label{eq:concaveF}
        F(\nu, \mu') - F(\nu, \mu) \leq \int_{\mathcal{Y}} \frac{\delta F}{\delta \mu}(\nu,\mu,y) (\mu'-\mu)(\mathrm{d}y).
        \end{equation}
\end{assumption}
\begin{assumption}[Boundedness of first order flat derivatives]
\label{assumption: boundedness-first-flat}
    There exist constants $C_{\nu}, C_{\mu} > 0$ such that for all $(\nu, \mu) \in \mathcal{P}(\mathcal{X}) \times \mathcal{P}(\mathcal{Y})$ and for all $(x, y) \in \mathcal{X} \times \mathcal{Y}$, we have
		\begin{equation*}
		\left| \frac{\delta F}{\delta \nu} (\nu, \mu, x) \right| \leq C_{\nu}, \quad  \left| \frac{\delta F}{\delta \mu} (\nu, \mu, y) \right| \leq C_{\mu}.
		\end{equation*}
\end{assumption}
\begin{assumption}[Boundedness of second order flat derivatives]
\label{assump:F}
	Suppose $F$ admits second order flat derivatives and that there exist constants $C_{\nu, \nu}, C_{\mu, \mu}, C_{\nu, \mu}, C_{\mu, \nu} > 0$ such that for all $(\nu, \mu) \in \mathcal{P}(\mathcal{X}) \times \mathcal{P}(\mathcal{Y})$ and for all $(x, y), (x', y') \in \mathcal{X} \times \mathcal{Y}$, we have
		\begin{multline*}
		\left| \frac{\delta^2 F}{\delta \nu^2} (\nu, \mu, x, x') \right| \leq C_{\nu, \nu}, \quad \left| \frac{\delta^2 F}{\delta \mu^2} (\nu, \mu, y, y') \right| \leq C_{\mu, \mu},\\ \left| \frac{\delta^2 F}{\delta \nu \delta \mu} (\nu, \mu, y, x) \right| \leq C_{\nu, \mu}, \quad \left| \frac{\delta^2 F}{\delta \mu \delta \nu} (\nu, \mu, x, y) \right| \leq C_{\mu, \nu}.
        \end{multline*}
\end{assumption}
In Lemma \ref{lemma: symmetry-flat}, we prove that the order of the flat derivatives in $\nu$ and $\mu$ can be interchanged. Using Assumption \ref{assump:F}, it is straightforward to check that there exist constants $C_{\nu}', C_{\mu}' > 0$ such that for all $(\nu,\mu) \in \mathcal{P}_{\text{ac}}(\mathcal{X}) \times \mathcal{P}_{\text{ac}}(\mathcal{Y})$, $(\nu',\mu') \in \mathcal{P}_{\text{ac}}(\mathcal{X}) \times \mathcal{P}_{\text{ac}}(\mathcal{Y})$ and all $(x, y) \in \mathcal{X} \times \mathcal{Y}$, we have that
        \begin{equation}
        \label{eq: 2.5i}
        \left|\frac{\delta F}{\delta \nu}(\nu, \mu, x) - \frac{\delta F}{\delta \nu}(\nu', \mu', x)\right| \leq C_{\nu}'\left(\operatorname{TV}(\nu,\nu') + \operatorname{TV}(\mu,\mu')\right),
        \end{equation}
        \begin{equation}
        \label{eq: 2.5ii}
        \left|\frac{\delta F}{\delta \mu}(\nu, \mu, y) - \frac{\delta F}{\delta \mu}(\nu', \mu', y)\right| \leq C_{\mu}'\left(\operatorname{TV}(\nu,\nu') + \operatorname{TV}(\mu,\mu')\right).
        \end{equation}
\begin{remark}
\label{remark:example-f}
    Observe that an objective function $F$ given by $F(\nu, \mu) = \int_{\mathcal{Y}}\int_{\mathcal{X}} f(x,y) \nu(\mathrm{d}x) \mu(\mathrm{d}y),$ where $f:\mathcal{X} \times \mathcal{Y} \to \mathbb{R}$ is bounded but possibly non-convex-non-concave, satisfies Assumptions \ref{assumption: assump-F-conv-conc}, \ref{assumption: boundedness-first-flat}, \ref{assump:F}. Indeed, Assumption \ref{assumption: assump-F-conv-conc} is trivially satisfied by such $F,$ while Assumptions \ref{assumption: boundedness-first-flat} and \ref{assump:F} hold due to the boundedness of $f.$ Functions $F$ of this type are prototypical in applications such as the training of GANs (see, e.g., \cite{pmlr-v70-arjovsky17a, pmlr-v97-hsieh19b}) and distributionally robust optimization (see, e.g, \cite{madry2018towards, sinha2018certifiable}).
\end{remark}
The following result extends Proposition $2.8$ from \cite{https://doi.org/10.48550/arxiv.2202.05841} by showing the existence and uniqueness of the pair of flows $(\nu_t, \mu_t)_{t \geq 0}$ which solve the MF-BR system \eqref{eq: mean-field-br} on \newline $\left(\mathcal{P}_{\text{ac}}(\mathcal{X}) \times \mathcal{P}_{\text{ac}}(\mathcal{Y}), \operatorname{TV}\right).$

\begin{proposition}[Existence of gradient flow for the MF-BR dynamics]
\label{prop:Existence-flows-FP}
    Let Assumptions \ref{assumption: boundedness-first-flat}, \ref{assump:F} hold and let $(\nu_0, \mu_0) \in \mathcal{P}_{\text{ac}}(\mathcal{X}) \times \mathcal{P}_{\text{ac}}(\mathcal{Y})$. Then there exists a unique pair of flows $(\nu_t,\mu_t)_{t \geq 0}$ in $\left(\mathcal{P}_{\text{ac}}(\mathcal{X}) \times \mathcal{P}_{\text{ac}}(\mathcal{Y}), \operatorname{TV}\right)$ satisfying \eqref{eq: mean-field-br}. Moreover, the solutions depend continuously on the initial conditions and $t \mapsto \nu_t \in C^1\left([0, \infty), \mathcal{P}_{\text{ac}}(\mathcal{X})\right),$ $t \mapsto \mu_t \in C^1\left([0, \infty), \mathcal{P}_{\text{ac}}(\mathcal{Y})\right).$
\end{proposition}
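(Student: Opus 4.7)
The plan is to view \eqref{eq: mean-field-br} as an ODE in the Banach space of finite signed measures on $\mathcal{X}$ (resp.\ $\mathcal{Y}$) equipped with the total-variation norm, rewrite it in its Duhamel (mild) form
\begin{equation*}
\nu_t = e^{-\alpha t}\nu_0 + \alpha\int_0^t e^{-\alpha(t-s)}\Psi_{\sigma}(\nu_s,\mu_s)\,\mathrm{d}s, \qquad \mu_t = e^{-\alpha t}\mu_0 + \alpha\int_0^t e^{-\alpha(t-s)}\Phi_{\sigma}(\nu_s,\mu_s)\,\mathrm{d}s,
\end{equation*}
and carry out a Banach fixed-point argument on $C([0,T^*];\mathcal{P}_{\text{ac}}(\mathcal{X})\times\mathcal{P}_{\text{ac}}(\mathcal{Y}))$. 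Two structural features drive the argument. First, the right-hand side above is an explicit convex combination of probability measures (since $e^{-\alpha t} + \alpha\int_0^t e^{-\alpha(t-s)}\mathrm{d}s = 1$ and $\Psi_\sigma(\nu_s,\mu_s), \Phi_\sigma(\nu_s,\mu_s) \in \mathcal{P}_{\text{ac}}$), so the subspace $\mathcal{P}_{\text{ac}}\times\mathcal{P}_{\text{ac}}$ is automatically preserved. Second, $\Psi_{\sigma}$ and $\Phi_{\sigma}$ will be shown to be globally Lipschitz in TV, which delivers the contraction.

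The crux is the following Lipschitz lemma: under Assumptions \ref{assumption: boundedness-first-flat}--\ref{assump:F} there exists $L = L(\sigma, C_\nu, C_\mu, C_\nu', C_\mu') > 0$ such that
\begin{equation*}
\operatorname{TV}\bigl(\Psi_{\sigma}(\nu,\mu),\Psi_{\sigma}(\nu',\mu')\bigr) + \operatorname{TV}\bigl(\Phi_{\sigma}(\nu,\mu),\Phi_{\sigma}(\nu',\mu')\bigr) \leq L\bigl(\operatorname{TV}(\nu,\nu') + \operatorname{TV}(\mu,\mu')\bigr).
\end{equation*}
To prove this I would first use $|\tfrac{\delta F}{\delta\nu}|\leq C_\nu$ together with $\int_{\mathcal{X}} e^{-U^\pi} = 1$ to obtain two-sided bounds $e^{-2C_\nu/\sigma^2} \leq Z(\nu,\mu) \leq e^{2C_\nu/\sigma^2}$ on the normalizing constants, and similarly for $Z'$. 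Writing $\Psi_{\sigma}(\nu,\mu)(x) = e^{A(\nu,\mu;x)}/\int e^{A(\nu,\mu;z)}\,\mathrm{d}z$ with $A(\nu,\mu;x) = -\tfrac{2}{\sigma^2}\tfrac{\delta F}{\delta \nu}(\nu,\mu,x) - U^\pi(x)$, the pointwise difference $\Psi_{\sigma}(\nu,\mu)(x) - \Psi_{\sigma}(\nu',\mu')(x)$ splits into a numerator and a normalizer term; both can be controlled by the elementary inequality $|e^a-e^b|\leq e^{\max(a,b)}|a-b|$ combined with the TV-Lipschitz estimate \eqref{eq: 2.5i} on the flat derivatives. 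Integrating against the integrable weight $e^{-U^\pi}$ then yields the claimed bound uniformly, without any compactness assumption on $\mathcal{X}$.

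With the Lipschitz lemma in hand, the Duhamel map is a contraction on $C([0,T^*];\mathcal{P}_{\text{ac}}(\mathcal{X})\times\mathcal{P}_{\text{ac}}(\mathcal{Y}))$ for $T^*$ small enough (chosen only in terms of $\alpha$ and $L$), producing a unique local solution. Because TV is globally bounded by $2$ along the flow and $T^*$ depends only on $\alpha, L$, the local solution extends to all of $[0,\infty)$ by iteration. Continuous dependence on the initial condition follows from subtracting the two Duhamel identities for solutions with different initial data, applying the Lipschitz lemma termwise, and closing the estimate with Gronwall on $t \mapsto \operatorname{TV}(\nu_t,\tilde\nu_t) + \operatorname{TV}(\mu_t,\tilde\mu_t)$. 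For $C^1$-regularity, once $t \mapsto (\nu_t,\mu_t)$ is TV-continuous the map $s \mapsto \Psi_{\sigma}(\nu_s,\mu_s)$ is TV-continuous as well, so we may differentiate the Duhamel formula directly to recover \eqref{eq: mean-field-br} with continuous right-hand side, yielding $\nu \in C^1([0,\infty);\mathcal{P}_{\text{ac}}(\mathcal{X}))$ and $\mu \in C^1([0,\infty);\mathcal{P}_{\text{ac}}(\mathcal{Y}))$.

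The main obstacle I anticipate is establishing the Lipschitz lemma cleanly when $\mathcal{X}$ and $\mathcal{Y}$ are unbounded: one cannot appeal to sup-norm bounds on the densities $\Psi_\sigma, \Phi_\sigma$, so the argument must convert the pointwise estimate into a TV bound by integrating against $\pi$ and $\rho$, relying on the fact that the Gibbs structure keeps the $L$ dependence finite even without compactness. Once this estimate is secured, the remainder of the proof is a standard Picard--Lindel\"of argument in a Banach space.
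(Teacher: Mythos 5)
Your proposal is correct and follows essentially the same route as the paper: Duhamel/mild formulation, the pointwise-weighted and TV-Lipschitz estimates for $\Psi_{\sigma},\Phi_{\sigma}$ derived from the bounds on $Z$, the inequality $|e^a-e^b|\leq e^{\max(a,b)}|a-b|$ and \eqref{eq: 2.5i}--\eqref{eq: 2.5ii}, followed by a Picard fixed-point argument, Gronwall for uniqueness and continuous dependence, and differentiation of the Duhamel formula for $C^1$ regularity. The only cosmetic difference is that you contract on $C([0,T^*];\cdot)$ for small $T^*$ and then iterate, whereas the paper obtains a Cauchy sequence on an arbitrary $[0,T]$ in the time-integrated TV metric via the factorial decay of iterated integrals; both are standard variants of the same argument.
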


We are ready to state one of the main results of the paper. 
\begin{theorem}[Convergence of MF-BR with explicit rates in $\operatorname{D_{KL}}$ and NI]
\label{thm:exponential-kl}
     Let Assumptions \ref{assumption: assump-F-conv-conc}, \ref{assumption: boundedness-first-flat}, \ref{assump:F} hold. Then the map $t \mapsto \operatorname{D_{KL}}({\nu_t}|{\Psi_{\sigma}(\nu_t,\mu_t)}) + \operatorname{D_{KL}}({\mu_t}|{\Phi_{\sigma}(\nu_t,\mu_t)})$ is differentiable for all $t > 0$, and we have that
     \begin{equation*}
    \frac{\mathrm{d}}{\mathrm{d}t} \left(\operatorname{D_{KL}}({\nu_t}|{\Psi_{\sigma}(\nu_t,\mu_t)}) + \operatorname{D_{KL}}({\mu_t}|{\Phi_{\sigma}(\nu_t,\mu_t)})\right) \leq -\alpha \big(\operatorname{D_J}(\nu_t|\Psi_{\sigma}(\nu_t, \mu_t)) + \operatorname{D_J}(\mu_t|\Phi_{\sigma}(\nu_t, \mu_t))\big),
     \end{equation*}
     where, for any $m,m' \in \mathcal{P}(\mathcal{M})$, with $\mathcal{M} \subseteq \mathbb R^d$, $\operatorname{D_J}(m,m') \coloneqq \operatorname{D_{KL}}(m|m') + \operatorname{D_{KL}}(m'|m)$ denotes Jeffreys divergence \cite{jeffreys} between $m$ and $m'$. Furthermore, suppose that $(\nu_0, \mu_0) \in \mathcal{P}_{\text{ac}}(\mathcal{X}) \times \mathcal{P}_{\text{ac}}(\mathcal{Y})$ are chosen such that $\operatorname{D_{KL}}({\nu_0}|{\Psi_{\sigma}(\nu_0,\mu_0)}) + \operatorname{D_{KL}}({\mu_0}|{\Phi_{\sigma}(\nu_0,\mu_0)}) < \infty,$ and let $\left(\nu_{\sigma}^*, \mu_{\sigma}^*\right)$ be the MNE of \eqref{eq:F-nonlinear}. Then, 
    \begin{equation*}
        \operatorname{D_{KL}}({\nu_t}|{\Psi_{\sigma}(\nu_t,\mu_t)}) + \operatorname{D_{KL}}({\mu_t}|{\Phi_{\sigma}(\nu_t,\mu_t)}) \leq e^{-\alpha t}\left(\operatorname{D_{KL}}({\nu_0}|{\Psi_{\sigma}(\nu_0,\mu_0)}) + \operatorname{D_{KL}}({\mu_0}|{\Phi_{\sigma}(\nu_0,\mu_0)})\right),
    \end{equation*}
    \begin{equation*}
         \operatorname{NI}({\nu_t}, {\mu_t}) \leq \frac{\sigma^2}{2}e^{-\alpha t}\left(\operatorname{D_{KL}}({\nu_0}|{\Psi_{\sigma}(\nu_0,\mu_0)}) + \operatorname{D_{KL}}({\mu_0}|{\Phi_{\sigma}(\nu_0,\mu_0)})\right),
    \end{equation*}
\end{theorem}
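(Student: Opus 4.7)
The plan is to differentiate the Lyapunov function $L(t) := \operatorname{D_{KL}}(\nu_t|\Psi_\sigma(\nu_t,\mu_t)) + \operatorname{D_{KL}}(\mu_t|\Phi_\sigma(\nu_t,\mu_t))$ along the flow \eqref{eq: mean-field-br}, show that the correction terms arising from the time-dependence of $\Psi_\sigma,\Phi_\sigma$ either cancel (cross terms) or carry a favourable sign (diagonal terms) thanks to the convex-concave structure of $F$, and then close with Gronwall's inequality and Lemma \ref{lemma: NI-equals-KL}. Differentiability of $L$ on $(0,\infty)$ should follow from the $C^1$ regularity of $t \mapsto (\nu_t,\mu_t)$ given by Proposition \ref{prop:Existence-flows-FP}, combined with the uniform bounds on the first and second flat derivatives of $F$ in Assumptions \ref{assumption: boundedness-first-flat}--\ref{assump:F} (the latter used to differentiate $\log\Psi_\sigma$ in time), which justify the interchange of $\partial_t$ and $\int$.

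For each summand, using $\operatorname{D_{KL}}(\nu|\Psi) = \int \nu \log\nu\, dx - \int \nu \log\Psi\, dx$ together with the mass-preservation identity $\int \partial_t \nu_t\, dx = 0$, I would obtain
\begin{equation*}
\frac{d}{dt}\operatorname{D_{KL}}(\nu_t|\Psi_\sigma(\nu_t,\mu_t)) = \int \partial_t \nu_t \log\frac{\nu_t}{\Psi_\sigma}\, dx - \int \nu_t\, \frac{\partial_t \Psi_\sigma}{\Psi_\sigma}\, dx.
\end{equation*}
Substituting $\partial_t \nu_t = \alpha(\Psi_\sigma - \nu_t)$ converts the first term into $-\alpha\operatorname{D_J}(\nu_t|\Psi_\sigma)$, which is exactly the Jeffreys term in the theorem. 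For the second term I would differentiate the explicit formula \eqref{eq:argminpsi-density} to get $\partial_t \log\Psi_\sigma = -\partial_t \log Z(\nu_t,\mu_t) - \tfrac{2}{\sigma^2}\partial_t \tfrac{\delta F}{\delta\nu}(\nu_t,\mu_t,\cdot)$, and then use $\int \partial_t\Psi_\sigma\, dx = 0$ (since $\Psi_\sigma$ stays a probability density) to eliminate $\partial_t \log Z$, arriving at
\begin{equation*}
-\int \nu_t\, \frac{\partial_t\Psi_\sigma}{\Psi_\sigma}\, dx = -\frac{2}{\sigma^2}\int (\nu_t-\Psi_\sigma)(x)\, \partial_t\frac{\delta F}{\delta\nu}(\nu_t,\mu_t,x)\, dx.
\end{equation*}
A chain rule for flat derivatives then expresses $\partial_t\tfrac{\delta F}{\delta\nu}$ as an integral of $\tfrac{\delta^2 F}{\delta\nu^2}$ against $\partial_t\nu_t$ plus an integral of $\tfrac{\delta^2 F}{\delta\mu\delta\nu}$ against $\partial_t\mu_t$, with symmetric formulae for the $\mu$-term.

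Collecting everything, the $\nu$-correction yields the quadratic form
\begin{equation*}
Q_\nu := \frac{2\alpha}{\sigma^2}\iint \frac{\delta^2 F}{\delta\nu^2}(\nu_t,\mu_t,x,x')\, (\nu_t-\Psi_\sigma)(x)\, (\nu_t-\Psi_\sigma)(x')\, dx\, dx',
\end{equation*}
which is nonnegative by convexity of $\nu \mapsto F(\nu,\mu)$ (Assumption \ref{assumption: assump-F-conv-conc}) applied to the zero-mass signed measure $\nu_t - \Psi_\sigma$; dually, the $\mu$-correction produces $Q_\mu \leq 0$ by concavity. The decisive simplification is that the two cross terms built from $\tfrac{\delta^2 F}{\delta\mu\delta\nu}$ enter the $\nu$- and $\mu$-derivatives with \emph{opposite} signs and cancel exactly by the symmetry of mixed flat derivatives (Lemma \ref{lemma: symmetry-flat}) --- the infinite-dimensional analogue of the cancellation one encounters for Euclidean zero-sum flows. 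What remains is
\begin{equation*}
L'(t) = -\alpha\bigl(\operatorname{D_J}(\nu_t|\Psi_\sigma) + \operatorname{D_J}(\mu_t|\Phi_\sigma)\bigr) - Q_\nu + Q_\mu \leq -\alpha\bigl(\operatorname{D_J}(\nu_t|\Psi_\sigma) + \operatorname{D_J}(\mu_t|\Phi_\sigma)\bigr),
\end{equation*}
which is the stated differential inequality.

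To finish, I would invoke $\operatorname{D_J}(m|m') \geq \operatorname{D_{KL}}(m|m')$ to upgrade the Jeffreys bound to $L'(t) \leq -\alpha L(t)$, apply Gronwall to obtain $L(t) \leq e^{-\alpha t} L(0)$, and then combine with the right-hand inequality of Lemma \ref{lemma: NI-equals-KL}, $\operatorname{NI}(\nu_t,\mu_t) \leq \tfrac{\sigma^2}{2} L(t)$, to conclude the NI estimate. The main technical obstacle is the bookkeeping around the correction term: one has to carefully differentiate the normalising constant $Z(\nu_t,\mu_t)$, justify the chain rule for flat derivatives along the flow (using Assumption \ref{assump:F} and Proposition \ref{prop:Existence-flows-FP} to move $\partial_t$ under the integral), and track the signs so that the cross contributions genuinely cancel \emph{before} any convexity-concavity argument is invoked on the diagonal quadratic forms.
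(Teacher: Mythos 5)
Your proposal follows essentially the same route as the paper: differentiate the Lyapunov function, substitute the flow to produce the Jeffreys terms, differentiate $\log\Psi_\sigma,\log\Phi_\sigma$ via the explicit Gibbs formulas (eliminating $\partial_t\log Z$ by mass conservation), discard the diagonal quadratic forms by convexity/concavity, and cancel the mixed terms by Lemma \ref{lemma: symmetry-flat}, before closing with Gronwall and Lemma \ref{lemma: NI-equals-KL}. The only blemish is a sign slip in your intermediate display, which should read $-\int \nu_t\,\partial_t\Psi_\sigma/\Psi_\sigma\,dx = +\tfrac{2}{\sigma^2}\int(\nu_t-\Psi_\sigma)\,\partial_t\tfrac{\delta F}{\delta\nu}\,dx$; your final assembled identity $L'(t)=-\alpha(\cdots)-Q_\nu+Q_\mu$ is nevertheless the correct one and agrees with the paper.
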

\begin{corollary}[Convergence of MF-BR with explicit rates in $\operatorname{D_{KL}},$ TV]
\label{corollary: KL-TV}
Let Assumptions \ref{assumption: assump-F-conv-conc}, \ref{assumption: boundedness-first-flat}, \ref{assump:F} hold. Suppose that $(\nu_0, \mu_0) \in \mathcal{P}_{\text{ac}}(\mathcal{X}) \times \mathcal{P}_{\text{ac}}(\mathcal{Y})$ are chosen such that $\operatorname{D_{KL}}({\nu_0}|{\Psi_{\sigma}(\nu_0,\mu_0)}) + \operatorname{D_{KL}}({\mu_0}|{\Phi_{\sigma}(\nu_0,\mu_0)}) < \infty,$ and let $\left(\nu_{\sigma}^*, \mu_{\sigma}^*\right)$ be the MNE of \eqref{eq:F-nonlinear}. Then,
 \begin{equation*}
   \operatorname{D_{KL}}({\nu_t}|\nu_{\sigma}^*) + \operatorname{D_{KL}}({\mu_t}|\mu_{\sigma}^*) \leq e^{-\alpha t}\left(\operatorname{D_{KL}}({\nu_0}|{\Psi_{\sigma}(\nu_0,\mu_0)}) + \operatorname{D_{KL}}({\mu_0}|{\Phi_{\sigma}(\nu_0,\mu_0)})\right),
\end{equation*}
\begin{equation*}
    \operatorname{TV}^2({\nu_t},\nu_{\sigma}^*) + \operatorname{TV}^2({\mu_t},\mu_{\sigma}^*) \leq \frac{1}{2}e^{-\alpha t}\left(\operatorname{D_{KL}}({\nu_0}|{\Psi_{\sigma}(\nu_0,\mu_0)}) + \operatorname{D_{KL}}({\mu_0}|{\Phi_{\sigma}(\nu_0,\mu_0)})\right).
\end{equation*}
\end{corollary}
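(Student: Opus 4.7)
The plan is to obtain this corollary as an immediate consequence of Theorem \ref{thm:exponential-kl} combined with the sandwich inequality \eqref{eq:NI-bounded-above} supplied by Lemma \ref{lemma: NI-equals-KL} and Pinsker's inequality. Essentially all the analytical work, namely differentiating the Lyapunov function along the MF-BR flow, producing the Gronwall-type differential inequality via the Jeffreys divergence, and applying the integral form of Gronwall's lemma, is already carried out in Theorem \ref{thm:exponential-kl}; the corollary is then just a matter of translating the exponential decay of the NI error into decay in $\operatorname{D_{KL}}$ and $\operatorname{TV}$ against the MNE $(\nu_\sigma^*, \mu_\sigma^*)$.

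For the first bound, I would take the left inequality in \eqref{eq:NI-bounded-above}, namely
\begin{equation*}
\tfrac{\sigma^2}{2}\bigl(\operatorname{D_{KL}}(\nu_t|\nu_\sigma^*) + \operatorname{D_{KL}}(\mu_t|\mu_\sigma^*)\bigr) \le \operatorname{NI}(\nu_t,\mu_t),
\end{equation*}
and substitute the NI bound from the second display in Theorem \ref{thm:exponential-kl}. Since both sides carry a factor $\sigma^2/2$, this factor cancels, leaving
\begin{equation*}
\operatorname{D_{KL}}(\nu_t|\nu_\sigma^*) + \operatorname{D_{KL}}(\mu_t|\mu_\sigma^*) \le e^{-\alpha t}\bigl(\operatorname{D_{KL}}(\nu_0|\Psi_\sigma(\nu_0,\mu_0)) + \operatorname{D_{KL}}(\mu_0|\Phi_\sigma(\nu_0,\mu_0))\bigr),
\end{equation*}
which is the first claim. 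Note that the cancellation of $\sigma^2$ here is precisely the reason the rate in $\operatorname{D_{KL}}$ against the MNE ends up independent of $\sigma$, even though the NI bound itself carries a $\sigma^2/2$ prefactor.

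For the TV bound, I would apply Pinsker's inequality componentwise, that is $\operatorname{TV}^2(\nu_t,\nu_\sigma^*) \le \tfrac{1}{2}\operatorname{D_{KL}}(\nu_t|\nu_\sigma^*)$ and similarly for $\mu_t$, then sum the two estimates and substitute the bound just obtained. The hypothesis that $\operatorname{D_{KL}}(\nu_0|\Psi_\sigma(\nu_0,\mu_0)) + \operatorname{D_{KL}}(\mu_0|\Phi_\sigma(\nu_0,\mu_0)) < \infty$ ensures all quantities involved are finite, so no additional integrability discussion is required.

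Honestly there is no real obstacle in this corollary: it is a bookkeeping consequence of the theorem preceding it, and the only point worth underlining is the role of Lemma \ref{lemma: NI-equals-KL} in converting convergence of NI (which is natural from the variational viewpoint) into convergence of $\operatorname{D_{KL}}$ relative to the MNE (which is natural from the information-geometric viewpoint), together with Pinsker's inequality as the standard bridge from $\operatorname{D_{KL}}$ to $\operatorname{TV}$.
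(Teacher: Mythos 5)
Your proof is correct and follows exactly the paper's own route: the first bound comes from combining the left-hand inequality of Lemma \ref{lemma: NI-equals-KL} with the NI decay in Theorem \ref{thm:exponential-kl} (with the $\sigma^2/2$ factors cancelling), and the second from Pinsker's inequality applied componentwise. The paper's proof is just a two-line version of this same argument.
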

\begin{corollary}[Convergence of MF-BR with explicit rate in NI error for bilinear $F$]
\label{corollary: convergence-NI-linear-F-FP}
    Let Assumptions \ref{assumption: boundedness-first-flat}, \ref{assump:F} hold. Then, for $F(\nu, \mu) = \int_{\mathcal{Y}}\int_{\mathcal{X}} f(x,y) \nu(\mathrm{d}x) \mu(\mathrm{d}y)$ with $f:\mathcal{X} \times \mathcal{Y} \to \mathbb{R}$ bounded and $\operatorname{NI}({\nu_0}, {\mu_0}) < \infty$, it holds that
    \begin{equation*}
        \operatorname{NI}({\nu_t}, {\mu_t}) \leq e^{-\alpha t}\operatorname{NI}({\nu_0}, {\mu_0}),
    \end{equation*}
\end{corollary}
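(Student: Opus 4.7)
The plan is to combine Theorem \ref{thm:exponential-kl} with an identity that, in the bilinear setting, sharpens the upper bound of Lemma \ref{lemma: NI-equals-KL} into an equality. First I would note that when $F(\nu,\mu)=\int_{\mathcal{Y}}\int_{\mathcal{X}} f(x,y)\,\nu(\mathrm{d}x)\mu(\mathrm{d}y)$, the flat derivatives $\frac{\delta F}{\delta\nu}(\nu,\mu,x)=\int_{\mathcal{Y}}f(x,y)\mu(\mathrm{d}y)$ and $\frac{\delta F}{\delta\mu}(\nu,\mu,y)=\int_{\mathcal{X}}f(x,y)\nu(\mathrm{d}x)$ are independent of one of the two arguments, and $F$ is affine in each variable separately, so the convexity--concavity inequalities \eqref{eq:convexF}--\eqref{eq:concaveF} hold as equalities. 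Consequently, the entropy-regularized linearizations \eqref{eq:psi-linear-F}--\eqref{eq:phi-linear-F} agree with the true best-response problems, namely $\Psi_{\sigma}(\nu,\mu)=\argmin_{\nu'\in\mathcal{P}_{\text{ac}}(\mathcal{X})}V^{\sigma}(\nu',\mu)$ and $\Phi_{\sigma}(\nu,\mu)=\argmax_{\mu'\in\mathcal{P}_{\text{ac}}(\mathcal{Y})}V^{\sigma}(\nu,\mu')$.

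The central step is to prove the Gibbs-variational identity
\[
V^{\sigma}(\nu,\Phi_{\sigma}(\nu,\mu))-V^{\sigma}(\nu,\mu)=\tfrac{\sigma^2}{2}\operatorname{D_{KL}}(\mu|\Phi_{\sigma}(\nu,\mu)),
\]
together with its $\nu$-side counterpart. For this I would substitute the explicit formula \eqref{eq:argmaxphi-density}, rewritten as $\int_{\mathcal{X}}f(x,y)\nu(\mathrm{d}x)=\tfrac{\sigma^2}{2}\log(\Phi_{\sigma}(\nu,\mu)(y)/\rho(y))+\tfrac{\sigma^2}{2}\log Z'(\nu,\mu)$, into $V^{\sigma}(\nu,\mu)$ and regroup the entropy terms. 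The constant $\tfrac{\sigma^2}{2}\log Z'(\nu,\mu)$ integrates to the same value against $\mu$ and against $\Phi_{\sigma}(\nu,\mu)$ and therefore cancels in the difference $V^{\sigma}(\nu,\Phi_{\sigma}(\nu,\mu))-V^{\sigma}(\nu,\mu)$, leaving precisely a KL divergence. Adding the two identities yields
\[
\operatorname{NI}(\nu,\mu)=\tfrac{\sigma^2}{2}\bigl(\operatorname{D_{KL}}(\nu|\Psi_{\sigma}(\nu,\mu))+\operatorname{D_{KL}}(\mu|\Phi_{\sigma}(\nu,\mu))\bigr),
\]
so that the right-hand side of \eqref{eq:NI-bounded-above} is attained with equality for every $(\nu,\mu)\in\mathcal{P}_{\text{ac}}(\mathcal{X})\times\mathcal{P}_{\text{ac}}(\mathcal{Y})$.

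Finally, since $\operatorname{NI}(\nu_0,\mu_0)<\infty$ is equivalent via the above identity to $\operatorname{D_{KL}}(\nu_0|\Psi_{\sigma}(\nu_0,\mu_0))+\operatorname{D_{KL}}(\mu_0|\Phi_{\sigma}(\nu_0,\mu_0))<\infty$, Theorem \ref{thm:exponential-kl} is applicable; combining its KL conclusion on the right-hand side with the identity on the left-hand side at both times $t$ and $0$ gives
\[
\operatorname{NI}(\nu_t,\mu_t)=\tfrac{\sigma^2}{2}\bigl(\operatorname{D_{KL}}(\nu_t|\Psi_{\sigma}(\nu_t,\mu_t))+\operatorname{D_{KL}}(\mu_t|\Phi_{\sigma}(\nu_t,\mu_t))\bigr)\le e^{-\alpha t}\operatorname{NI}(\nu_0,\mu_0),
\]
as claimed. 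The main, and essentially only, obstacle is the completion-of-squares calculation behind the Gibbs-variational identity; its payoff is that the $\tfrac{\sigma^2}{2}$ factor appearing in the NI bound of Theorem \ref{thm:exponential-kl} is absorbed by the same factor arising in the identity at time $0$, yielding a convergence rate in the NI error that is independent of $\sigma$.
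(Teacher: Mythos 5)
Your proposal is correct and follows essentially the same route as the paper: the paper invokes the equality case of Lemma \ref{lemma: NI-equals-KL} for bilinear $F$ (whose appendix proof is precisely your Gibbs-variational computation, with the $\log Z'(\nu,\mu)$ terms cancelling) and then combines it with the NI bound of Theorem \ref{thm:exponential-kl} at times $t$ and $0$ so that the $\tfrac{\sigma^2}{2}$ factors cancel.
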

Lastly, we would like to demonstrate that in the continuous time setting the convergence study for the MF-BR flow \eqref{eq: mean-field-br} consequently leads to the convergence of a related type of flow known as \textit{fictitious play} (FP) in the literature of games on $\mathbb R^d$ (see e.g. \cite{brown:fp1951, 10.2307/1969530, HARRIS1998238}). In the setup of min-max games on $\mathbb R^d$, it is showed in \cite{Hofbauer} that the continuous-time \emph{best response} and \emph{fictitious play} dynamics are equivalent up to a time rescale.
\begin{remark}
\label{remark: fp-equiv-br}
    An intrinsic feature of the \emph{best response} algorithm is that players know the opponent's strategy at the exact same round when they make their move as opposed to \emph{fictitious play}, where players respond best against the historical distribution of the opponent's strategies. The distinction between the FP and BR flows is that for \emph{fictitious play}, the flow equations hold at the level of the averaged-in-time strategies $(\hat{\nu}_t, \hat{\mu}_t) \coloneqq \left(\frac{1}{t}\int_0^t \nu_s \mathrm{d}s, \frac{1}{t}\int_0^t \mu_s \mathrm{d}s\right)$. We show how to recover the (mean-field) \emph{fictitious play} flow from the MF-BR flow \eqref{eq: mean-field-br}. From Proposition \ref{prop:Existence-flows-FP}, we have that, for all $(x,y) \in \mathcal{X} \times \mathcal{Y}$, the maps $[0, \infty) \ni t \mapsto \nu_t(x) \in \mathcal{X}$ and $[0, \infty) \ni t \mapsto \mu_t(y) \in \mathcal{Y}$ are in $C^1([0, \infty)),$ and solve \eqref{eq: mean-field-br}. 

    Therefore, by setting $\hat{\nu}_t = \nu_{\log t}$ and $\hat{\mu}_t = \mu_{\log t},$ for all $t \geq t_0 > 0,$ with initial condition $(\hat{\nu}_{t_0}, \hat{\mu}_{t_0}) \in \mathcal{P}_{\text{ac}}(\mathcal{X}) \times \mathcal{P}_{\text{ac}}(\mathcal{Y})$, and applying the chain rule, we obtain that
    \begin{equation}
    \label{eq:mean-field-BR}
        \begin{cases}
            \mathrm{d}\hat{\nu}_t(x) = \frac{1}{t}\mathrm{d}\nu_{\log t}(x) = \frac{\alpha}{t}\left(\Psi_{\sigma}(\nu_{\log t}, \mu_{\log t})(x) - \nu_{\log t}(x)\right)\mathrm{d}t = \frac{\alpha}{t}\left(\Psi_{\sigma}(\hat{\nu}_t, \hat{\mu}_t)(x) - \hat{\nu}_t(x)\right)\mathrm{d}t,\\
            \mathrm{d}\hat{\mu}_t(y) = \frac{1}{t}\mathrm{d}\mu_{\log t}(y) = \frac{\alpha}{t}\left(\Phi_{\sigma}(\nu_{\log t}, \mu_{\log t})(y) - \mu_{\log t}\right)(y)\mathrm{d}t = \frac{\alpha}{t}\left(\Phi_{\sigma}(\hat{\nu}_t, \hat{\mu}_t)(y) - \hat{\mu}_t(y)\right)\mathrm{d}t,
        \end{cases}
    \end{equation}
    for all $(x,y) \in \mathcal{X} \times \mathcal{Y}$, which is precisely the mean-field version of the classical fictitious play flow studied for instance in \cite{HARRIS1998238, Hofbauer, Ostrovski}.
    
This fact suggests that in continuous time one could arbitrarily choose to work with either the MF-BR flow \eqref{eq: mean-field-br} or the fictitious play flow \eqref{eq:mean-field-BR} since the convergence rates for the flow \eqref{eq:mean-field-BR} can be obtained from Theorem \ref{thm:exponential-kl} via a change in timescale. Specifically, we can show that the maps $t \mapsto \operatorname{D_{KL}}(\hat{\nu}_t|\Psi_{\sigma}(\hat{\nu}_t,\hat{\mu}_t)) + \operatorname{D_{KL}}(\hat{\mu}_t|\Phi_{\sigma}(\hat{\nu}_t,\hat{\mu}_t))$ and $t \mapsto \operatorname{NI}(\hat{\nu}_t, \hat{\mu}_t)$ decrease along the flow \eqref{eq:mean-field-BR} with rates $\mathcal{O}\left(\frac{\alpha}{t}\right)$ and $\mathcal{O}\left(\frac{\alpha\sigma^2}{2t}\right),$ respectively. Additionally, via Corollary \ref{corollary: KL-TV}, the maps $t \mapsto \operatorname{D_{KL}}(\hat{\nu}_t|\nu_{\sigma}^*) + \operatorname{D_{KL}}(\hat{\mu}_t|\mu_{\sigma}^*)$ and $t \mapsto \operatorname{TV}^2(\hat{\nu}_t,\nu_{\sigma}^*) + \operatorname{TV}^2(\hat{\mu_t},\mu_{\sigma}^*)$ decrease along the flow \eqref{eq:mean-field-BR} with rate $\mathcal{O}\left(\frac{\alpha}{t}\right).$
\end{remark}
\begin{remark}
It is worth mentioning that our setup can be generalized by restricting the reference measures $\pi, \rho$ to be probability measures but not necessarily absolutely continuous with respect to the Lebesgue measure on $\mathbb R^d.$ Indeed, on a technical level, $\mathcal{P}_{\text{ac}}(\mathcal{X})$ and $\mathcal{P}_{\text{ac}}(\mathcal{Y})$ would be changed to $\mathcal{P}_{\pi}(\mathcal{X})$ and $\mathcal{P}_{\rho}(\mathcal{Y}),$ the set of probability measures absolutely continuous with respect to $\pi$ and $\rho,$ respectively. However, for practical purposes, we follow the setup adopted in previous works such as \cite{10.1214/20-AIHP1140, https://doi.org/10.48550/arxiv.2202.05841}.
\end{remark}

\section{Proof of Theorem \ref{thm:exponential-kl}, Corollary \ref{corollary: KL-TV} and Corollary \ref{corollary: convergence-NI-linear-F-FP}} 
Before we present the proof of Theorem \ref{thm:exponential-kl}, we state some useful auxiliary results which are proved in Section \ref{appendix: AppC}. We split the proof of Theorem \ref{thm:exponential-kl} into three steps:
    \begin{itemize}
        \item First, we show that the map \(t \mapsto \operatorname{D_{KL}}({\nu_t}|{\Psi_{\sigma}(\nu_t,\mu_t)}) + \operatorname{D_{KL}}({\mu_t}|{\Phi_{\sigma}(\nu_t,\mu_t)})\) is differentiable when $(\nu_t, \mu_t)_{t \geq 0}$ satisfies the MF-BR flow.
        \item Second, we differentiate \(t \mapsto \operatorname{D_{KL}}({\nu_t}|{\Psi_{\sigma}(\nu_t,\mu_t)}) + \operatorname{D_{KL}}({\mu_t}|{\Phi_{\sigma}(\nu_t,\mu_t)})\) with respect to $t$ and show that $\frac{\mathrm{d}}{\mathrm{d}t}\left(\operatorname{D_{KL}}({\nu_t}|{\Psi_{\sigma}(\nu_t,\mu_t)}) + \operatorname{D_{KL}}({\mu_t}|{\Phi_{\sigma}(\nu_t,\mu_t)})\right)$ is bounded above by $-\alpha\left(\operatorname{D_{KL}}({\nu_t}|{\Psi_{\sigma}(\nu_t,\mu_t)}) + \operatorname{D_{KL}}({\mu_t}|{\Phi_{\sigma}(\nu_t,\mu_t)})\right)$.
        \item Lastly, we finish by applying Gronwall's inequality to obtain exponential convergence. Subsequently, we establish exponential convergence with respect to $t \mapsto \operatorname{NI}(\nu_t, \mu_t).$
    \end{itemize}  

The lemma below is an adaptation of the first part of \cite[Proposition $4.2$]{https://doi.org/10.48550/arxiv.2202.05841} to the min-max setting \eqref{eq:F-nonlinear}. 
\begin{lemma}
\label{proposition:PsiPhiBounds}
Suppose that Assumption \ref{assumption: boundedness-first-flat} holds. Then there exist constants $k_{\Psi_{\sigma}},K_{\Psi_{\sigma}}, k_{\Phi_{\sigma}},K_{\Phi_{\sigma}}$ with $0<k_{\Psi_{\sigma}}<1<K_{\Psi_{\sigma}}<\infty$ and $0<k_{\Phi_{\sigma}}<1<K_{\Phi_{\sigma}}<\infty$ such that for all $(\nu,\mu) \in \mathcal{P}(\mathcal{X}) \times \mathcal{P}(\mathcal{Y})$ and all $(x,y) \in \mathcal{X} \times \mathcal{Y}$, we have that
    \begin{equation}
    \label{eq: estimate-psi}
        k_{\Psi_{\sigma}}e^{-U^{\pi}(x)} \leq \Psi_{\sigma}(\nu, \mu)(x) \leq K_{\Psi_{\sigma}}e^{-U^{\pi}(x)},
    \end{equation}
    \begin{equation}
    \label{eq: estimate-phi}
        k_{\Phi_{\sigma}}e^{-U^{\rho}(y)} \leq \Phi_{\sigma}(\nu, \mu)(y) \leq K_{\Phi_{\sigma}}e^{-U^{\rho}(y)},
    \end{equation}
where, by an abuse of notation, $\Psi_{\sigma}(\nu, \mu)(x)$ and $\Phi_{\sigma}(\nu, \mu)(y)$ denote the densities of $\Psi_{\sigma}(\nu, \mu)$ and $\Phi_{\sigma}(\nu, \mu)$, respectively, with respect to the Lebesgue measure on $\mathcal{X}$ and $\mathcal{Y}$. Moreover, $\Psi_{\sigma}(\nu, \mu)$ and $\Phi_{\sigma}(\nu, \mu)$ belong to $\mathcal{P}(\mathcal{X})$ and $\mathcal{P}(\mathcal{Y})$.
\end{lemma}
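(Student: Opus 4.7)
The plan is straightforward and essentially reduces to elementary exponential estimates, exploiting the uniform boundedness of the flat derivatives together with the explicit Gibbs-type form \eqref{eq:argminpsi-density}--\eqref{eq:argmaxphi-density} for $\Psi_\sigma$ and $\Phi_\sigma$. Since the argument for $\Phi_\sigma$ is perfectly symmetric to the one for $\Psi_\sigma$, I will concentrate on the latter.

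First I would use Assumption \ref{assumption: boundedness-first-flat}, which gives $|\tfrac{\delta F}{\delta\nu}(\nu,\mu,x)|\le C_\nu$ uniformly in $(\nu,\mu,x)$, to bracket the exponent appearing in \eqref{eq:argminpsi-density}. This yields the pointwise two-sided bound
\begin{equation*}
e^{-2C_\nu/\sigma^2}\,e^{-U^\pi(x)}\;\le\;\exp\!\left(-\tfrac{2}{\sigma^2}\tfrac{\delta F}{\delta\nu}(\nu,\mu,x)-U^\pi(x)\right)\;\le\;e^{2C_\nu/\sigma^2}\,e^{-U^\pi(x)}.
\end{equation*}
Integrating in $x$ and using the normalization $\int_\mathcal X e^{-U^\pi(x)}\mathrm{d}x=1$ I get the analogous control of the partition function,
\begin{equation*}
e^{-2C_\nu/\sigma^2}\;\le\;Z(\nu,\mu)\;\le\;e^{2C_\nu/\sigma^2}.
\end{equation*}

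Next I divide the numerator bounds by the opposite extreme of $Z(\nu,\mu)$ to conclude that, for every $x\in\mathcal X$,
\begin{equation*}
e^{-4C_\nu/\sigma^2}\,e^{-U^\pi(x)}\;\le\;\Psi_\sigma(\nu,\mu)(x)\;\le\;e^{4C_\nu/\sigma^2}\,e^{-U^\pi(x)}.
\end{equation*}
This gives the desired inequality \eqref{eq: estimate-psi} with the explicit choice $k_{\Psi_\sigma}:=e^{-4C_\nu/\sigma^2}$ and $K_{\Psi_\sigma}:=e^{4C_\nu/\sigma^2}$; because $C_\nu>0$ and $\sigma>0$, we automatically have $0<k_{\Psi_\sigma}<1<K_{\Psi_\sigma}<\infty$. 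Repeating the argument verbatim with $U^\rho$, $C_\mu$ and $\tfrac{\delta F}{\delta\mu}$ in place of $U^\pi$, $C_\nu$ and $-\tfrac{\delta F}{\delta\nu}$ delivers \eqref{eq: estimate-phi} with $k_{\Phi_\sigma}=e^{-4C_\mu/\sigma^2}$ and $K_{\Phi_\sigma}=e^{4C_\mu/\sigma^2}$.

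Finally, to verify that $\Psi_\sigma(\nu,\mu)\in\mathcal P(\mathcal X)$ and $\Phi_\sigma(\nu,\mu)\in\mathcal P(\mathcal Y)$, I note that the bounds just established imply in particular that both densities are strictly positive almost everywhere and, by definition of $Z(\nu,\mu)$ and $Z'(\nu,\mu)$, integrate to $1$ over $\mathcal X$ and $\mathcal Y$ respectively. The only subtle point, which I would flag explicitly, is the finiteness and positivity of the partition functions: these are guaranteed by the two-sided exponent bound combined with the assumption that $e^{-U^\pi}$ and $e^{-U^\rho}$ are probability densities, so no integrability issue arises even when $\mathcal X$ or $\mathcal Y$ is unbounded. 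There is no real obstacle in this proof; the delicate choice is simply to state the constants explicitly in terms of $C_\nu,C_\mu,\sigma$ so that they can be tracked downstream in Theorem \ref{thm:exponential-kl}.
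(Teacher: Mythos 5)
Your proof is correct and follows essentially the same route as the paper: bracket the exponent using Assumption \ref{assumption: boundedness-first-flat}, integrate to bound $Z(\nu,\mu)$ between $e^{\mp 2C_\nu/\sigma^2}$, and divide to obtain $K_{\Psi_\sigma}=1/k_{\Psi_\sigma}=e^{4C_\nu/\sigma^2}$, with the normalization giving $\Psi_\sigma(\nu,\mu)\in\mathcal P(\mathcal X)$. The constants you obtain coincide exactly with those in the paper's proof.
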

The corollary and lemma below are extensions of \cite[Corollary $4.5$]{https://doi.org/10.48550/arxiv.2202.05841} and \cite[Lemma $4.7$]{https://doi.org/10.48550/arxiv.2202.05841}, respectively, to the min-max setting \eqref{eq:F-nonlinear}. 
\begin{corollary}
\label{corollary: A.6}
Let Assumption \ref{assumption: boundedness-first-flat}, \ref{assump:F} hold. We have the following bounds on $\nu_t(x)$ and $\mu_t(y):$
\begin{equation}
\label{equation:nu-density-bound}
\left( 1 - e^{-\alpha t} \right) k_{\Psi_{\sigma}} e^{-U^{\pi}\left(x\right)} \leq \nu_t \left(x\right) \leq \left( 1 - e^{-\alpha t} \right) K_{\Psi_{\sigma}} e^{-U^{\pi}\left(x\right)} + e^{-\alpha t}\nu_0\left(x\right),
\end{equation}
\begin{equation*}
\left( 1 - e^{-\alpha t} \right) k_{\Phi_{\sigma}} e^{-U^{\rho}\left(y\right)} \leq \mu_t \left(y\right) \leq \left( 1 - e^{-\alpha t} \right) K_{\Phi_{\sigma}} e^{-U^{\pi}\left(y\right)} + e^{-\alpha t} \mu_0\left(y\right).
\end{equation*}
hold for all \(x,y \in \mathcal{X}\). 
\end{corollary}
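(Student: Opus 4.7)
The plan is to view the MF-BR dynamics pointwise in $x$ (respectively $y$) as a scalar linear ODE with a bounded source term, solve it explicitly via Duhamel's formula, and insert the two-sided bounds on $\Psi_\sigma$ and $\Phi_\sigma$ from Lemma \ref{proposition:PsiPhiBounds}.

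Fix $x \in \mathcal{X}$. By Proposition \ref{prop:Existence-flows-FP} the map $t \mapsto \nu_t(x)$ is $C^1$ and satisfies the first equation of \eqref{eq: mean-field-br}, which can be rewritten as
\begin{equation*}
\frac{\mathrm{d}}{\mathrm{d}t}\nu_t(x) + \alpha\, \nu_t(x) = \alpha\, \Psi_\sigma(\nu_t,\mu_t)(x).
\end{equation*}
Multiplying by the integrating factor $e^{\alpha t}$ and integrating from $0$ to $t$ yields the variation-of-parameters representation
\begin{equation*}
\nu_t(x) = e^{-\alpha t}\nu_0(x) + \alpha \int_0^t e^{-\alpha(t-s)}\,\Psi_\sigma(\nu_s,\mu_s)(x)\,\mathrm{d}s.
\end{equation*}

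Next, I apply Lemma \ref{proposition:PsiPhiBounds}, which gives $k_{\Psi_\sigma}e^{-U^\pi(x)} \leq \Psi_\sigma(\nu_s,\mu_s)(x) \leq K_{\Psi_\sigma}e^{-U^\pi(x)}$ uniformly in $s \geq 0$. Using $\alpha \int_0^t e^{-\alpha(t-s)}\,\mathrm{d}s = 1 - e^{-\alpha t}$, the upper bound follows directly:
\begin{equation*}
\nu_t(x) \leq e^{-\alpha t}\nu_0(x) + \bigl(1-e^{-\alpha t}\bigr) K_{\Psi_\sigma}\, e^{-U^\pi(x)}.
\end{equation*}
For the lower bound, I discard the nonnegative term $e^{-\alpha t}\nu_0(x)$ and use the lower bound on $\Psi_\sigma$:
\begin{equation*}
\nu_t(x) \geq \alpha \int_0^t e^{-\alpha(t-s)}\, k_{\Psi_\sigma}\, e^{-U^\pi(x)}\,\mathrm{d}s = \bigl(1-e^{-\alpha t}\bigr) k_{\Psi_\sigma}\, e^{-U^\pi(x)}.
\end{equation*}
The identical argument, applied to the second equation of \eqref{eq: mean-field-br} at a fixed $y \in \mathcal{Y}$, gives the analogous two-sided bound for $\mu_t(y)$.

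There is no serious obstacle: the only ingredients are Duhamel's formula for a one-dimensional linear ODE and the uniform (in the arguments $\nu,\mu$) bounds on the best-response densities provided by Lemma \ref{proposition:PsiPhiBounds}. The mild points to verify are that the pointwise ODE interpretation is legitimate (guaranteed by the $C^1$ regularity in Proposition \ref{prop:Existence-flows-FP}) and that the densities $\Psi_\sigma(\nu_s,\mu_s)(x)$ are measurable in $s$ so that the Duhamel integral is well defined, which follows from the continuity of $s \mapsto (\nu_s,\mu_s)$ in TV together with the Lipschitz estimates \eqref{eq: 2.5i}--\eqref{eq: 2.5ii}.
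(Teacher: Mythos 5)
Your proposal is correct and follows essentially the same route as the paper: rewrite \eqref{eq: mean-field-br} via Duhamel's formula, then insert the two-sided bounds on $\Psi_{\sigma}$ and $\Phi_{\sigma}$ from Lemma \ref{proposition:PsiPhiBounds}, discarding the nonnegative term $e^{-\alpha t}\nu_0(x)$ for the lower bound. The extra remarks on the pointwise ODE interpretation and measurability of the integrand are harmless additions to the same argument.
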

\begin{lemma}[Differentiability of $\operatorname{D_{KL}}$ divergence with respect to Gibbs measure]
\label{lemma:integrability-derivative-of-entropy}
Let Assumption \ref{assumption: boundedness-first-flat}, \ref{assump:F} hold and let \(s > 0\). There exist integrable functions \(f, g, \hat{f}, \hat{g}\) such that the following holds for all \((x,y) \in \mathcal{X} \times \mathcal{Y}\) 
and all \(s \leq t < +\infty\)
\[
g\left(x\right) \leq \log \frac{\nu_t\left(x\right)}{e^{-U^{\pi}\left(x\right)}} \left(\Psi_{\sigma}(\nu_t, \mu_t)(x) - \nu_t\left(x\right)\right) \leq f\left(x\right),
\]
\[
\hat{g}\left(y\right) \leq \log \frac{\mu_t\left(y\right)}{e^{-U^{\rho}\left(y\right)}} \left(\Phi_{\sigma}(\nu_t, \mu_t)(y) - \mu_t\left(y\right)\right) \leq \hat{f}\left(y\right).
\]
\end{lemma}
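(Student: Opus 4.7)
The plan is to construct the dominating functions $f, g$ on $\mathcal{X}$ directly from the pointwise sandwich estimates already provided by Lemma \ref{proposition:PsiPhiBounds} and Corollary \ref{corollary: A.6}; the functions $\hat{f}, \hat{g}$ on $\mathcal{Y}$ will follow by the entirely symmetric construction. The idea is that for any fixed $s > 0$ and all $t \geq s$, the density $\nu_t(x)$ is pinched between two multiples of $e^{-U^{\pi}(x)}$ up to a transient term $e^{-\alpha s}\nu_0(x)$ inherited from the initial condition; this gives $t$-uniform control of both the logarithmic factor $\log(\nu_t(x)/e^{-U^{\pi}(x)})$ and the increment $\Psi_{\sigma}(\nu_t, \mu_t)(x) - \nu_t(x)$, and the product of these controls yields the desired $t$-independent majorants.

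Concretely, for $t \geq s$ the inequalities of Corollary \ref{corollary: A.6} and Lemma \ref{proposition:PsiPhiBounds} combine to give
\[
(1-e^{-\alpha s}) k_{\Psi_{\sigma}} e^{-U^{\pi}(x)} \leq \nu_t(x) \leq K_{\Psi_{\sigma}} e^{-U^{\pi}(x)} + e^{-\alpha s}\nu_0(x),
\]
\[
k_{\Psi_{\sigma}} e^{-U^{\pi}(x)} \leq \Psi_{\sigma}(\nu_t, \mu_t)(x) \leq K_{\Psi_{\sigma}} e^{-U^{\pi}(x)}.
\]
Subtracting yields $|\Psi_{\sigma}(\nu_t,\mu_t)(x) - \nu_t(x)| \leq K_{\Psi_{\sigma}} e^{-U^{\pi}(x)} + e^{-\alpha s}\nu_0(x)$. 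Dividing the first display by $e^{-U^{\pi}(x)}$, taking logarithms, and using $\log(a+b) \leq \log(2a) + \max(0, \log(b/a))$ yields
\[
\log\bigl((1-e^{-\alpha s})k_{\Psi_{\sigma}}\bigr) \leq \log\frac{\nu_t(x)}{e^{-U^{\pi}(x)}} \leq \log(2K_{\Psi_{\sigma}}) + \max\!\Bigl(0,\,\log\tfrac{\nu_0(x)\,e^{U^{\pi}(x)}}{K_{\Psi_{\sigma}}}\Bigr).
\]

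Multiplying these pointwise bounds and case-splitting by the signs of the two factors produces, both for the upper majorant $f$ and (up to sign) for the lower minorant $g$, a function of the form
\[
f(x) = A\, e^{-U^{\pi}(x)} + B\, \nu_0(x) + C\, e^{-U^{\pi}(x)} \max\!\bigl(0,\log(\nu_0(x)\,e^{U^{\pi}(x)})\bigr) + D\, \nu_0(x) \max\!\bigl(0,\log(\nu_0(x)\,e^{U^{\pi}(x)})\bigr),
\]
with constants $A, B, C, D > 0$ depending only on $s, \alpha, k_{\Psi_{\sigma}}, K_{\Psi_{\sigma}}$, independent of $t$ and $x$. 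Integrability of the first two terms is immediate from $\int e^{-U^{\pi}(x)}\,\mathrm{d}x = \int \nu_0(x)\,\mathrm{d}x = 1$. For the third term, the elementary inequality $\max(0,\log r) \leq r$ gives $e^{-U^{\pi}(x)} \max(0,\log(\nu_0(x)/e^{-U^{\pi}(x)})) \leq \nu_0(x)$, hence integrable. For the fourth term, writing $\pi = e^{-U^{\pi}}$, splitting the integral $\int \nu_0\,\max(0,\log(\nu_0/\pi))\,\mathrm{d}x$ on $\{\nu_0 > \pi\}$ and its complement, and using $r\log(1/r) \leq 1/e$ for $r \in (0,1]$, bounds it by $\operatorname{D_{KL}}(\nu_0|\pi) + 1/e$. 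The hypothesis $\operatorname{D_{KL}}(\nu_0|\Psi_{\sigma}(\nu_0,\mu_0)) < \infty$ in Theorem \ref{thm:exponential-kl}, combined with $\Psi_{\sigma}(\nu_0,\mu_0) \leq K_{\Psi_{\sigma}} e^{-U^{\pi}}$ from Lemma \ref{proposition:PsiPhiBounds}, implies $\operatorname{D_{KL}}(\nu_0|\pi) \leq \operatorname{D_{KL}}(\nu_0|\Psi_{\sigma}(\nu_0,\mu_0)) + \log K_{\Psi_{\sigma}} < \infty$, so the fourth term is also finite.

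The main obstacle is precisely this cross-term $\nu_0(x)\,\max(0,\log(\nu_0(x)/\pi(x)))$: all other contributions are controlled immediately from the normalisations $\int e^{-U^{\pi}}\,\mathrm{d}x = \int \nu_0\,\mathrm{d}x = 1$ together with $\max(0,\log r) \leq r$, whereas this one forces the appeal to the finite-entropy hypothesis on the initial condition standing in the ambient theorem. The functions $\hat{f}, \hat{g}$ on $\mathcal{Y}$ are then constructed by the entirely symmetric argument with $\mu_t, \Phi_{\sigma}, \mu_0, U^{\rho}, k_{\Phi_{\sigma}}, K_{\Phi_{\sigma}}$ in place of $\nu_t, \Psi_{\sigma}, \nu_0, U^{\pi}, k_{\Psi_{\sigma}}, K_{\Psi_{\sigma}}$.
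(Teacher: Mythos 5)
Your proof is correct, but it takes a more direct route than the paper's. The paper splits the product into $\Psi_{\sigma}(\nu_t,\mu_t)\log\frac{\nu_t}{e^{-U^{\pi}}}$ and $-\nu_t\log\frac{\nu_t}{e^{-U^{\pi}}}$ and treats each with a different device: for the first it applies $\log(a+b)\leq \log a + b/a$ to Duhamel's formula, which converts the initial-condition contribution inside the logarithm into a \emph{linear} term $\kappa_s\nu_0(x)$ and thereby avoids any $\nu_0\log\nu_0$ term in that majorant; for the second it applies Jensen's inequality to the convex map $z\mapsto z\log z$ along Duhamel's formula (upper bound) and the elementary bound $z\log z\geq -1/e$ (lower bound). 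You instead bound the logarithmic factor and the increment $\Psi_{\sigma}(\nu_t,\mu_t)-\nu_t$ separately in absolute value and multiply, which is more elementary (no Jensen step) but produces the cross term $\nu_0\max(0,\log(\nu_0/\pi))$, whose integrability you must then extract from the finite-entropy hypothesis $\operatorname{D_{KL}}(\nu_0|\Psi_{\sigma}(\nu_0,\mu_0))<\infty$ of Theorem \ref{thm:exponential-kl}. This is not a defect relative to the paper: the paper's own minorant $g_2$ contains exactly the same term $\max(\nu_0\log(\nu_0/e^{-U^{\pi}}),0)$, whose integrability likewise requires $\operatorname{D_{KL}}(\nu_0|\pi)<\infty$ even though the lemma's statement does not list that hypothesis --- so you have in fact made explicit an assumption the paper uses silently. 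The net trade-off: the paper's decomposition yields slightly cleaner majorants at the cost of two extra tricks, while your argument needs only the pointwise sandwich bounds of Lemma \ref{proposition:PsiPhiBounds} and Corollary \ref{corollary: A.6} plus one appeal to the finite relative entropy of $\nu_0$.
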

Finally, we state an auxiliary Lemma \ref{lemma: NI-equals-KL}, linking the NI error to the Lyapunov functions $t \mapsto \operatorname{D_{KL}}({\nu_t}|\nu_{\sigma}^*) + \operatorname{D_{KL}}({\mu_t}|\mu_{\sigma}^*)$ and \(t \mapsto \operatorname{D_{KL}}({\nu_t}|{\Psi_{\sigma}(\nu_t,\mu_t)}) + \operatorname{D_{KL}}({\mu_t}|{\Phi_{\sigma}(\nu_t,\mu_t)})\). 
The proof of this lemma can also be found in Section \ref{appendix: AppC}. A similar type of ``sandwich'' lemma can be found in \cite{yulong}.
\begin{lemma}
\label{lemma: NI-equals-KL}
    Let Assumption \ref{assumption: assump-F-conv-conc} hold and let $(\nu_{\sigma}^*, \mu_{\sigma}^*)$ be the MNE of \eqref{eq:F-nonlinear}. Then, for any $\left(\nu, \mu\right) \in \mathcal{P}(\mathcal{X}) \times \mathcal{P}(\mathcal{Y})$ and any $\sigma > 0,$ we have 
    \begin{equation*}
        \frac{\sigma^2}{2}\left(\operatorname{D_{KL}}({\nu}|\nu_{\sigma}^*) + \operatorname{D_{KL}}({\mu}|\mu_{\sigma}^*)\right) \leq \operatorname{NI}(\nu,\mu) \leq \frac{\sigma^2}{2}\left(\operatorname{D_{KL}}({\nu}|{\Psi_{\sigma}(\nu,\mu)}) + \operatorname{D_{KL}}({\mu}|{\Phi_{\sigma}(\nu,\mu)})\right).
    \end{equation*}
The right-hand side inequality becomes equality when $F$ is bilinear, i.e., $$F(\nu, \mu) = \int_{\mathcal{Y}}\int_{\mathcal{X}} f(x,y) \nu(\mathrm{d}x) \mu(\mathrm{d}y),$$ for some function $f:\mathcal{X} \times \mathcal{Y} \to \mathbb{R}.$
\end{lemma}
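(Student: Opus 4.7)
The plan is to prove the two inequalities separately, using the explicit Gibbs-form representations of $(\nu_{\sigma}^*, \mu_{\sigma}^*)$ from \eqref{eq: nu-implicit}--\eqref{eq: mu-implicit} and of $(\Psi_{\sigma}(\nu,\mu), \Phi_{\sigma}(\nu,\mu))$ from \eqref{eq:argminpsi-density}--\eqref{eq:argmaxphi-density}, combined with the convex-concave structure in Assumption \ref{assumption: assump-F-conv-conc}. For the left-hand inequality, I would start from the saddle-point property \eqref{eq: nashdefmeasures}, which yields $\min_{\nu'} V^{\sigma}(\nu', \mu) \leq V^{\sigma}(\nu_{\sigma}^*, \mu)$ and $\max_{\mu'} V^{\sigma}(\nu, \mu') \geq V^{\sigma}(\nu, \mu_{\sigma}^*)$, so that
\begin{equation*}
\operatorname{NI}(\nu, \mu) \geq \bigl[V^{\sigma}(\nu, \mu_{\sigma}^*) - V^{\sigma}(\nu_{\sigma}^*, \mu_{\sigma}^*)\bigr] + \bigl[V^{\sigma}(\nu_{\sigma}^*, \mu_{\sigma}^*) - V^{\sigma}(\nu_{\sigma}^*, \mu)\bigr].
\end{equation*}
I would then use convexity of $\nu \mapsto F(\nu, \mu_{\sigma}^*)$ via \eqref{eq:convexF}, substitute the identity $\frac{\delta F}{\delta \nu}(\nu_{\sigma}^*, \mu_{\sigma}^*, x) = -\frac{\sigma^2}{2}\bigl(\log \nu_{\sigma}^*(x) + U^{\pi}(x) + \log Z\bigr)$ read off from \eqref{eq: nu-implicit}, and combine it with the $\operatorname{D_{KL}}(\nu|\pi) - \operatorname{D_{KL}}(\nu_{\sigma}^*|\pi)$ piece of $V^{\sigma}$; the $U^{\pi}$ and $\log Z$ contributions cancel and leave precisely $\frac{\sigma^2}{2} \operatorname{D_{KL}}(\nu | \nu_{\sigma}^*)$ as a lower bound for the first bracket. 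The mirror computation using concavity of $F$ in $\mu$ and \eqref{eq: mu-implicit} produces $\frac{\sigma^2}{2} \operatorname{D_{KL}}(\mu | \mu_{\sigma}^*)$ for the second bracket, and summing the two delivers the left inequality.

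For the right-hand inequality I would use concavity of $F$ in $\mu$, i.e.\ \eqref{eq:concaveF}, to obtain, for every $\mu' \in \mathcal{P}_{\text{ac}}(\mathcal{Y})$,
\begin{equation*}
V^{\sigma}(\nu, \mu') - V^{\sigma}(\nu, \mu) \leq \int_{\mathcal{Y}} \frac{\delta F}{\delta \mu}(\nu, \mu, y)(\mu' - \mu)(\mathrm{d}y) - \frac{\sigma^2}{2}\bigl(\operatorname{D_{KL}}(\mu'|\rho) - \operatorname{D_{KL}}(\mu|\rho)\bigr).
\end{equation*}
The right-hand side, viewed as a function of $\mu'$, is (up to a constant independent of $\mu'$) exactly the objective maximized in \eqref{eq:phi-linear-F}, so it attains its maximum at $\mu' = \Phi_{\sigma}(\nu, \mu)$. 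Taking $\max_{\mu'}$ on the left-hand side and plugging in the explicit form $\frac{\delta F}{\delta \mu}(\nu, \mu, y) = \frac{\sigma^2}{2}\bigl(\log \Phi_{\sigma}(\nu,\mu)(y) + U^{\rho}(y) + \log Z'\bigr)$ from \eqref{eq:argmaxphi-density} on the right, the $U^{\rho}$ and $\log Z'$ pieces cancel between the linearization term and the two $\operatorname{D_{KL}}(\cdot | \rho)$ terms, leaving exactly $\frac{\sigma^2}{2}\operatorname{D_{KL}}(\mu | \Phi_{\sigma}(\nu, \mu))$. The symmetric argument based on \eqref{eq:convexF} and \eqref{eq:argminpsi-density} gives $V^{\sigma}(\nu, \mu) - \min_{\nu'} V^{\sigma}(\nu', \mu) \leq \frac{\sigma^2}{2}\operatorname{D_{KL}}(\nu | \Psi_{\sigma}(\nu, \mu))$, and summing the two yields the desired upper bound on $\operatorname{NI}(\nu,\mu)$.

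For the equality statement in the bilinear case, the key point is that when $F(\nu,\mu) = \int_{\mathcal{Y}}\int_{\mathcal{X}} f(x,y) \nu(\mathrm{d}x) \mu(\mathrm{d}y)$, the flat derivatives $\frac{\delta F}{\delta \nu}$ and $\frac{\delta F}{\delta \mu}$ are independent of the argument with respect to which they are taken, so $F$ is separately affine in $\nu$ and in $\mu$ and the inequalities \eqref{eq:convexF}--\eqref{eq:concaveF} become identities; the linearization step in the right-hand argument is then exact, and every subsequent step is already an equality, so the full chain collapses to the claimed identity. The main obstacle I anticipate is the careful bookkeeping in the algebraic cancellations of the $U^{\pi}, U^{\rho}, \log Z, \log Z'$ contributions when the Gibbs densities are substituted into the mixed expressions involving flat derivatives and relative entropies to $\pi, \rho$; nothing is conceptually deep, but one must track the terms meticulously to see that everything collapses cleanly to the target $\operatorname{D_{KL}}$ quantities.
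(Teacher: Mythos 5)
Your proposal is correct and follows essentially the same route as the paper's proof: the left inequality via the linearization inequalities at the MNE combined with its Gibbs/first-order characterization, the right inequality via the variational optimality of $\Psi_{\sigma},\Phi_{\sigma}$ for the entropy-regularized linearized objectives together with their explicit Gibbs forms, and the equality case by noting that the linearization is exact for bilinear $F$. The algebraic cancellations you flag as the main bookkeeping burden do go through exactly as you describe.
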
 
\begin{proof}[Proof of Theorem \ref{thm:exponential-kl}]
\emph{Step 1: Differentiability of $\operatorname{D_{KL}}$ with respect to the MF-BR dynamics \eqref{eq: mean-field-br}:}
The differentiability of the map \(t \mapsto \operatorname{D_{KL}}({\nu_t}|{\Psi_{\sigma}(\nu_t,\mu_t)}) + \operatorname{D_{KL}}({\mu_t}|{\Phi_{\sigma}(\nu_t,\mu_t)})\) follows from a standard argument utilizing the dominated convergence theorem (see \cite[Theorem $11.5$]{schilling2017measures} for more details). Let $t > 0$ and a sequence $(t_n)_{n \in \mathbb{N}} \subset (0, \infty)$ such that $t_n \neq t$, for all $n \in \mathbb{N}$, and $\lim_{n \to \infty} t_n = t$. From the definition of $\operatorname{D_{KL}}$, we have that
    \begin{multline}
    \label{eq:integral-dct}
        \partial_t \operatorname{D_{KL}}({\nu_t}|{\Psi_{\sigma}(\nu_t,\mu_t)}) = \lim_{n \to \infty} \frac{\operatorname{D_{KL}}({\nu_{t_n}}|{\Psi_{\sigma}(\nu_{t_n},\mu_{t_n})}) - \operatorname{D_{KL}}({\nu_t}|{\Psi_{\sigma}(\nu_t,\mu_t)})}{t_n - t}\\ 
        = \lim_{n \to \infty} \int_{\mathcal{X}} \frac{1}{t_n-t}\left(\nu_{t_n}(x) \log \frac{\nu_{t_n}(x)}{\Psi_{\sigma}(\nu_{t_n},\mu_{t_n})(x)} - \nu_t(x) \log \frac{\nu_t(x)}{\Psi_{\sigma}(\nu_t,\mu_t)(x)}\right) \mathrm{d}x.
    \end{multline}  
Observe that
\begin{equation*}
        \lim_{n \to \infty} \frac{1}{t_n-t}\left(\nu_{t_n}(x) \log \frac{\nu_{t_n}(x)}{\Psi_{\sigma}(\nu_{t_n},\mu_{t_n})(x)} - \nu_t(x) \log \frac{\nu_t(x)}{\Psi_{\sigma}(\nu_t,\mu_t)(x)}\right) = \partial_t \left(\nu_t(x) \log \frac{\nu_t(x)}{\Psi_{\sigma}(\nu_t,\mu_t)(x)}\right).
\end{equation*}
Therefore, we can exchange the limit and the integral in \eqref{eq:integral-dct} using the dominated convergence theorem if there exist integrable functions $h_{\Psi_{\sigma}}, \Tilde{h}_{\Psi_{\sigma}}: \mathcal{X} \to \mathbb R$ such that, for all $t > 0$ and all $x \in \mathcal{X}$, it holds that
\begin{equation*}
        h_{\Psi_{\sigma}}(x) \leq \partial_t \left(\nu_t(x) \log \frac{\nu_t(x)}{\Psi_{\sigma}(\nu_t,\mu_t)(x)}\right) \leq \Tilde{h}_{\Psi_{\sigma}}(x).
\end{equation*}
We can argue similarly to show that $t \mapsto \operatorname{D_{KL}}({\mu_t}|{\Phi_{\sigma}(\nu_t,\mu_t)})$ is differentiable by finding integrable functions $h_{\Phi_{\sigma}}, \Tilde{h}_{\Phi_{\sigma}}: \mathcal{Y} \to \mathbb R$ such that, for all $t > 0$ and all $y \in \mathcal{Y}$, we have that
\begin{equation*}
        h_{\Phi_{\sigma}}(y) \leq \partial_t \left(\mu_t(y) \log \frac{\mu_t(y)}{\Phi_{\sigma}(\nu_t,\mu_t)(y)}\right) \leq \Tilde{h}_{\Phi_{\sigma}}(y).
\end{equation*}
    Using Lemma \ref{proposition:PsiPhiBounds}, Corollary \ref{corollary: A.6} and Lemma \ref{lemma:integrability-derivative-of-entropy}, we will show how to obtain $h_{\Psi_{\sigma}}$ and $\Tilde{h}_{\Psi_{\sigma}}$ since obtaining $h_{\Phi_{\sigma}}$ and $\Tilde{h}_{\Phi_{\sigma}}$ is analogous. First, we can rewrite
    \begin{align*}
        \partial_t \left(\nu_t(x) \log \frac{\nu_t(x)}{\Psi_{\sigma}(\nu_t,\mu_t)(x)}\right) &= \partial_t \left(\nu_t(x) \log \frac{\nu_t(x)}{e^{-U^{\pi}(x)}} + \nu_t(x) \log \frac{e^{-U^{\pi}(x)}}{\Psi_{\sigma}(\nu_t,\mu_t)(x)}\right)\\
        &= \left(1+ \log \frac{\nu_t(x)}{e^{-U^{\pi}(x)}}\right)\partial_t \nu_t(x) + \partial_t\left(\nu_t(x) \log \frac{e^{-U^{\pi}(x)}}{\Psi_{\sigma}(\nu_t,\mu_t)(x)}\right)\\ 
        &= \alpha\left(1+ \log \frac{\nu_t(x)}{e^{-U^{\pi}(x)}}\right)\left(\Psi_{\sigma}(\nu_t,\mu_t)(x) - \nu_t(x)\right)\\ &+ \alpha\left(\log \frac{e^{-U^{\pi}(x)}}{\Psi_{\sigma}(\nu_t,\mu_t)(x)}\right)\left(\Psi_{\sigma}(\nu_t,\mu_t)(x) - \nu_t(x)\right)\\ &- \nu_t(x) \partial_t \log \Psi_{\sigma}(\nu_t,\mu_t)(x).
    \end{align*}
    Next, we will bound by integrable functions each of the three terms in the last equality.
    \newline
    
    \emph{Bound for the term $\alpha\left(1+ \log \frac{\nu_t(x)}{e^{-U^{\pi}(x)}}\right)\left(\Psi_{\sigma}(\nu_t,\mu_t)(x) - \nu_t(x)\right)$:}
    By Lemma \ref{lemma:integrability-derivative-of-entropy}, there exist integrable functions $f,g: \mathcal{X} \to \mathbb R$ such that for all $x \in \mathcal{X},$
    \begin{equation*}
        f(x) \leq \log \frac{\nu_t(x)}{e^{-U^{\pi}(x)}}\left(\Psi_{\sigma}(\nu_t,\mu_t)(x) - \nu_t(x)\right) \leq g(x).
    \end{equation*}
    Estimate \eqref{equation:nu-density-bound} in Corollary \ref{corollary: A.6} further reads
    \begin{multline}
    \label{eq: new-bound-nu}
    \begin{aligned}
        0 \leq \left( 1 - e^{-\alpha t} \right) k_{\Psi_{\sigma}} e^{-U^{\pi}\left(x\right)} \leq \nu_t \left(x\right) &\leq \left( 1 - e^{-\alpha t} \right) K_{\Psi_{\sigma}} e^{-U^{\pi}\left(x\right)} + e^{-\alpha t}\nu_0\left(x\right)\\ &\leq K_{\Psi_{\sigma}} e^{-U^{\pi}\left(x\right)} + \nu_0\left(x\right).
    \end{aligned}
    \end{multline}
    Thus, combining estimate \eqref{eq: estimate-psi} from Lemma \ref{proposition:PsiPhiBounds} and the estimate above, we have 
    \begin{align*}
        h_1(x) :=-\alpha \left((K_{\Psi_{\sigma}} - k_{\Psi_{\sigma}})e^{-U^{\pi}\left(x\right)} + \nu_0\left(x\right)\right) &\leq \alpha \left(\Psi_{\sigma}(\nu_t,\mu_t)(x) - \nu_t(x)\right)\\ &\leq \alpha K_{\Psi_{\sigma}}e^{-U^{\pi}(x)} =: \Tilde{h}_1(x),
    \end{align*}
    and hence we obtain that
    \begin{equation*}
        h_1(x) + \alpha f(x) \leq \alpha\left(1+ \log \frac{\nu_t(x)}{e^{-U^{\pi}(x)}}\right)\left(\Psi_{\sigma}(\nu_t,\mu_t)(x) - \nu_t(x)\right) \leq \alpha g(x) + \Tilde{h}_1(x).
    \end{equation*}
    \emph{Bound for the term $\alpha\left(\log \frac{e^{-U^{\pi}(x)}}{\Psi_{\sigma}(\nu_t,\mu_t)(x)}\right)\left(\Psi_{\sigma}(\nu_t,\mu_t)(x) - \nu_t(x)\right)$:}
    We can split the second term as
    \begin{align*}
        \alpha\ \log \frac{e^{-U^{\pi}(x)}}{\Psi_{\sigma}(\nu_t,\mu_t)(x)}\left(\Psi_{\sigma}(\nu_t,\mu_t)(x) - \nu_t(x)\right) &= \alpha \Psi_{\sigma}(\nu_t,\mu_t)(x)\log \frac{e^{-U^{\pi}(x)}}{\Psi_{\sigma}(\nu_t,\mu_t)(x)}\\
        &- \alpha \nu_t(x)\log \frac{e^{-U^{\pi}(x)}}{\Psi_{\sigma}(\nu_t,\mu_t)(x)}.
    \end{align*}
    Using estimate \eqref{eq: estimate-psi} in Lemma \ref{proposition:PsiPhiBounds} and the fact that $\Psi_{\sigma}(\nu_t,\mu_t)(x) \geq 0,$ it follows that
    \begin{equation*}
        \Psi_{\sigma}(\nu_t,\mu_t)(x)\log \frac{1}{K_{\Psi_{\sigma}}} \leq \Psi_{\sigma}(\nu_t,\mu_t)(x)\log \frac{e^{-U^{\pi}(x)}}{\Psi_{\sigma}(\nu_t,\mu_t)(x)} \leq  \Psi_{\sigma}(\nu_t,\mu_t)(x)\log \frac{1}{k_{\Psi_{\sigma}}}
    \end{equation*}
    Note that $K_{\Psi_{\sigma}} > 1 > k_{\Psi_{\sigma}} > 0$ and so $\log \frac{1}{K_{\Psi_{\sigma}}} < 0 < \log \frac{1}{k_{\Psi_{\sigma}}}$. Therefore, using estimate \eqref{eq: estimate-psi} again we get that
    \begin{equation*}
        \alpha K_{\Psi_{\sigma}}e^{-U^{\pi}(x)}\log \frac{1}{K_{\Psi_{\sigma}}} \leq \alpha \Psi_{\sigma}(\nu_t,\mu_t)(x)\log \frac{1}{K_{\Psi_{\sigma}}},
    \end{equation*}
    and
    \begin{equation*}
        \alpha \Psi_{\sigma}(\nu_t,\mu_t)(x)\log \frac{1}{k_{\Psi_{\sigma}}} \leq \alpha K_{\Psi_{\sigma}}e^{-U^{\pi}(x)}\log \frac{1}{k_{\Psi_{\sigma}}}.
    \end{equation*}
    Hence, we obtain
    \begin{align*}
        h_2(x) := \alpha K_{\Psi_{\sigma}}e^{-U^{\pi}(x)}\log \frac{1}{K_{\Psi_{\sigma}}} &\leq \alpha \Psi_{\sigma}(\nu_t,\mu_t)(x)\log \frac{e^{-U^{\pi}(x)}}{\Psi_{\sigma}(\nu_t,\mu_t)(x)}\\
        &\leq \alpha K_{\Psi_{\sigma}}e^{-U^{\pi}(x)}\log \frac{1}{k_{\Psi_{\sigma}}} =: \Tilde{h}_2(x).
    \end{align*}
    Next, since $\nu_t(x) \geq 0$ and $\log \frac{1}{K_{\Psi_{\sigma}}} < 0 < \log \frac{1}{k_{\Psi_{\sigma}}}$, it follows from estimates \eqref{eq: estimate-psi} and \eqref{eq: new-bound-nu} that
    \begin{align*}
         \alpha \nu_t(x)\log \frac{1}{K_{\Psi_{\sigma}}} &\leq \alpha \nu_t(x)\log \frac{e^{-U^{\pi}(x)}}{\Psi_{\sigma}(\nu_t,\mu_t)(x)}\\ &\leq \alpha \nu_t(x) \log \frac{1}{k_{\Psi_{\sigma}}}\\
        &\leq \alpha \left(K_{\Psi_{\sigma}} e^{-U^{\pi}\left(x\right)} + \nu_0\left(x\right)\right)\log \frac{1}{k_{\Psi_{\sigma}}} =: -h_3(x).
    \end{align*}
    Since $\log \frac{1}{K_{\Psi_{\sigma}}} < 0,$ using again estimate \eqref{eq: new-bound-nu} gives that
    \begin{equation*}
        -\Tilde{h}_3(x):=\alpha\left(K_{\Psi_{\sigma}} e^{-U^{\pi}\left(x\right)} + \nu_0\left(x\right)\right)\log \frac{1}{K_{\Psi_{\sigma}}} \leq \alpha \nu_t(x)\log \frac{1}{K_{\Psi_{\sigma}}},
    \end{equation*}
    and hence
    \begin{equation*}
        -\Tilde{h}_3(x) \leq \alpha \nu_t(x)\log \frac{e^{-U^{\pi}(x)}}{\Psi_{\sigma}(\nu_t,\mu_t)(x)} \leq -h_3(x).
    \end{equation*}
    Therefore, we obtain
    \begin{equation*}
        h_2(x) + h_3(x) \leq \alpha \left(\log \frac{e^{-U^{\pi}(x)}}{\Psi_{\sigma}(\nu_t,\mu_t)(x)}\right)\left(\Psi_{\sigma}(\nu_t,\mu_t)(x) - \nu_t(x)\right) \leq \Tilde{h}_2(x) + \Tilde{h}_3(x).
    \end{equation*}
    \emph{Bound for the term $\nu_t(x) \partial_t \log \Psi_{\sigma}(\nu_t,\mu_t)(x)$:}
    First, using the expression of $\Psi_{\sigma}$ from \eqref{eq:argminpsi-density}, we can calculate that 
    \begin{align*}
        \partial_t \log \Psi_{\sigma}(\nu_t,\mu_t)(x) &= -\partial_t \log Z(\nu_t,\mu_t) - \frac{2}{\sigma^2} \partial_t \frac{\delta F}{\delta \nu}(\nu_t, \mu_t, x)\\
        &= -\frac{\partial_t Z(\nu_t,\mu_t)}{Z(\nu_t,\mu_t)} - \frac{2}{\sigma^2} \partial_t \frac{\delta F}{\delta \nu}(\nu_t, \mu_t, x)\\
        &= - \frac{1}{Z(\nu_t,\mu_t)} \left(\int_{\mathcal{X}} \frac{\delta Z}{\delta \nu}(\nu_t,\mu_t,z)\partial_t \nu_t(z)\mathrm{d}z + \int_{\mathcal{Y}} \frac{\delta Z}{\delta \mu}(\nu_t,\mu_t,w)\partial_t \mu_t(w)\mathrm{d}w\right)\\ 
        &- \frac{2}{\sigma^2}\left(\int_{\mathcal{X}} \frac{\delta^2 F}{\delta \nu^2}(\nu_t,\mu_t,x,z)\partial_t \nu_t(z)\mathrm{d}z + \int_{\mathcal{Y}} \frac{\delta^2 F}{\delta \mu \delta \nu}(\nu_t,\mu_t,x,w)\partial_t \mu_t(w)\mathrm{d}w\right).
    \end{align*}
    Then, using \eqref{eq:argminpsi-density}, observe that
    \begin{equation*}
        \frac{1}{Z(\nu_t,\mu_t)}\frac{\delta Z}{\delta \nu}(\nu_t,\mu_t,z) = -\frac{2}{\sigma^2}\int_{\mathcal{X}} \frac{\delta^2 F}{\delta \nu^2}(\nu_t,\mu_t,x,z) \Psi_{\sigma}(\nu_t, \mu_t)(x) \mathrm{d}x,
    \end{equation*}
    \begin{equation*}
        \frac{1}{Z(\nu_t,\mu_t)}\frac{\delta Z}{\delta \mu}(\nu_t,\mu_t,w) = -\frac{2}{\sigma^2}\int_{\mathcal{X}} \frac{\delta^2 F}{\delta \mu \delta \nu}(\nu_t,\mu_t,x,w) \Psi_{\sigma}(\nu_t, \mu_t)(x) \mathrm{d}x,
    \end{equation*}
    and hence, we obtain that
    \begin{multline}
    \label{eq: log-Psi}
    \begin{aligned}
        \partial_t \log \Psi_{\sigma}(\nu_t,\mu_t)(x) &= \frac{2}{\sigma^2}\Bigg(\int_{\mathcal{X}} \int_{\mathcal{X}} \frac{\delta^2 F}{\delta \nu^2}(\nu_t,\mu_t,x',z) \Psi_{\sigma}(\nu_t, \mu_t)(x')\partial_t \nu_t(z)\mathrm{d}x'\mathrm{d}z\\ 
        &+ \int_{\mathcal{Y}} \int_{\mathcal{X}} \frac{\delta^2 F}{\delta \mu \delta \nu}(\nu_t,\mu_t,x',w) \Psi_{\sigma}(\nu_t, \mu_t)(x') \partial_t \mu_t(w)\mathrm{d}x'\mathrm{d}w\\ 
        &- \int_{\mathcal{X}} \frac{\delta^2 F}{\delta \nu^2}(\nu_t,\mu_t,x,z)\partial_t \nu_t(z)\mathrm{d}z - \int_{\mathcal{Y}} \frac{\delta^2 F}{\delta \mu \delta \nu}(\nu_t,\mu_t,x,w)\partial_t \mu_t(w)\mathrm{d}w\Bigg)\\
        &= \frac{2\alpha}{\sigma^2}\Bigg(\int_{\mathcal{X}} \int_{\mathcal{X}} \frac{\delta^2 F}{\delta \nu^2}(\nu_t,\mu_t,x',z) \Psi_{\sigma}(\nu_t, \mu_t)(x')\Psi_{\sigma}(\nu_t, \mu_t) (z)\mathrm{d}x'\mathrm{d}z\\ 
        &- \int_{\mathcal{X}} \int_{\mathcal{X}} \frac{\delta^2 F}{\delta \nu^2}(\nu_t,\mu_t,x',z) \Psi_{\sigma}(\nu_t, \mu_t)(x')\nu_t(z)\mathrm{d}x'\mathrm{d}z\\
        &- \int_{\mathcal{X}} \frac{\delta^2 F}{\delta \nu^2}(\nu_t,\mu_t,x,z)\Psi_{\sigma}(\nu_t, \mu_t)(z)\mathrm{d}z + \int_{\mathcal{X}} \frac{\delta^2 F}{\delta \nu^2}(\nu_t,\mu_t,x,z)\nu_t(z)\mathrm{d}z\Bigg)\\ 
        &+ \frac{2\alpha}{\sigma^2}\Bigg(\int_{\mathcal{Y}} \int_{\mathcal{X}} \frac{\delta^2 F}{\delta \mu \delta \nu}(\nu_t,\mu_t,x',w) \Psi_{\sigma}(\nu_t, \mu_t)(x') \Phi_{\sigma}(\nu_t, \mu_t)(w) \mathrm{d}x'\mathrm{d}w\\
        &- \int_{\mathcal{Y}} \int_{\mathcal{X}} \frac{\delta^2 F}{\delta \mu \delta \nu}(\nu_t,\mu_t,x',w) \Psi_{\sigma}(\nu_t, \mu_t)(x') \mu_t(w)\mathrm{d}x'\mathrm{d}w\\
        &- \int_{\mathcal{Y}} \frac{\delta^2 F}{\delta \mu \delta \nu}(\nu_t,\mu_t,x,w)\Phi_{\sigma}(\nu_t, \mu_t)(w)\mathrm{d}w + \int_{\mathcal{Y}} \frac{\delta^2 F}{\delta \mu \delta \nu}(\nu_t,\mu_t,x,w)\mu_t(w)\mathrm{d}w\Bigg) ,
    \end{aligned}
    \end{multline}
    where the second equality follows from the MF-BR flow \eqref{eq: mean-field-br}. 
    From Assumption \ref{assump:F} and the fact that $\Psi_{\sigma}, \Phi_{\sigma}, \nu$ and $\mu$ are all probability density functions, it follows that
    \begin{equation*}
         -\frac{8\alpha}{\sigma^2}C_{\nu, \nu} -\frac{8\alpha}{\sigma^2}C_{\mu, \nu} \leq \partial_t \log \Psi_{\sigma}(\nu_t,\mu_t)(x) \leq \frac{8\alpha}{\sigma^2}C_{\nu, \nu} + \frac{8\alpha}{\sigma^2}C_{\mu, \nu}.
    \end{equation*}
    Multiplying the last inequality by $\nu_t(x) \geq 0$ and using \eqref{eq: new-bound-nu}, we get that
    \begin{multline*}
        -\left(\frac{8\alpha}{\sigma^2}C_{\nu, \nu} +\frac{8\alpha}{\sigma^2}C_{\mu, \nu}\right)\left(K_{\Psi_{\sigma}} e^{-U^{\pi}\left(x\right)} + \nu_0\left(x\right)\right) \leq \nu_t(x)\partial_t \log \Psi_{\sigma}(\nu_t,\mu_t)(x)\\ 
        \leq \left(\frac{8\alpha}{\sigma^2}C_{\nu, \nu} +\frac{8\alpha}{\sigma^2}C_{\mu, \nu}\right)\left(K_{\Psi_{\sigma}} e^{-U^{\pi}\left(x\right)} + \nu_0\left(x\right)\right) =: h_4(x).
    \end{multline*}
    Putting everything together, we finally obtain that
    \begin{equation*}
        h_{\Psi_{\sigma}}(x) \leq \partial_t \left(\nu_t(x) \log \frac{\nu_t(x)}{\Psi_{\sigma}(\nu_t,\mu_t)(x)}\right) \leq \Tilde{h}_{\Psi_{\sigma}}(x),
    \end{equation*}
    where 
    \begin{equation*}
        h_{\Psi_{\sigma}}(x) = h_1(x) + \alpha f(x) + h_2(x) + h_3(x) - h_4(x),
    \end{equation*}
    \begin{equation*}
        \Tilde{h}_{\Psi_{\sigma}}(x) = \Tilde{h}_1(x) + \alpha g(x) + \Tilde{h}_2(x) + \Tilde{h}_3(x) + h_4(x).
    \end{equation*}
    \emph{Step 2: The Lyapunov function decreases along the MF-BR dynamics:}
    Since the map $t \mapsto \operatorname{D_{KL}}({\nu_t}|{\Psi_{\sigma}(\nu_t,\mu_t)}) + \operatorname{D_{KL}}({\mu_t}|{\Phi_{\sigma}(\nu_t,\mu_t)})$ is differentiable for all $t > 0,$ we have that
    \begin{multline}
    \label{eq: derivative-KL}
    \begin{aligned}
        &\frac{\mathrm{d}}{\mathrm{d}t} \left(\operatorname{D_{KL}}({\nu_t}|{\Psi_{\sigma}(\nu_t,\mu_t)}) + \operatorname{D_{KL}}({\mu_t}|{\Phi_{\sigma}(\nu_t,\mu_t)})\right)\\ &= \int_{\mathcal{X}} \partial_t \left(\nu_t(x) \log \frac{\nu_t(x)}{\Psi_{\sigma}(\nu_t,\mu_t)(x)}\right) \mathrm{d}x 
        + \int_{\mathcal{Y}} \partial_t \left(\mu_t(y) \log \frac{\mu_t(y)}{\Phi_{\sigma}(\nu_t,\mu_t)(y)}\right) \mathrm{d}y\\
        &= \int_{\mathcal{X}} \log \frac{\nu_t(x)}{\Psi_{\sigma}(\nu_t,\mu_t)(x)}\partial_t \nu_t(x) \mathrm{d}x + \int_{\mathcal{X}} \nu_t(x)\left(\frac{\partial_t \nu_t(x)}{\nu_t(x)} - \partial_t \log \Psi_{\sigma}(\nu_t,\mu_t)(x)\right) \mathrm{d}x\\
        &+ \int_{\mathcal{Y}} \log \frac{\mu_t(y)}{\Phi_{\sigma}(\nu_t,\mu_t)(y)}\partial_t \mu_t(y) \mathrm{d}y + \int_{\mathcal{Y}} \mu_t(y)\left(\frac{\partial_t \mu_t(y)}{\mu_t(y)} - \partial_t \log \Phi_{\sigma}(\nu_t,\mu_t)(y)\right) \mathrm{d}y\\
        &= \alpha\int_{\mathcal{X}} \log \frac{\nu_t(x)}{\Psi_{\sigma}(\nu_t,\mu_t)(x)}\left(\Psi_{\sigma}(\nu_t,\mu_t)(x) - \nu_t(x)\right) \mathrm{d}x - \int_{\mathcal{X}} \nu_t(x)\partial_t \log \Psi_{\sigma}(\nu_t,\mu_t)(x) \mathrm{d}x\\
        &+ \alpha\int_{\mathcal{Y}} \log \frac{\mu_t(y)}{\Phi_{\sigma}(\nu_t,\mu_t)(y)}\left(\Phi_{\sigma}(\nu_t,\mu_t)(y) - \mu_t(y)\right) \mathrm{d}y - \int_{\mathcal{Y}} \mu_t(y)\partial_t \log \Phi_{\sigma}(\nu_t,\mu_t)(y) \mathrm{d}y\\
        &= -\alpha\big(\operatorname{D_{KL}}({\nu_t}|{\Psi_{\sigma}(\nu_t, \mu_t)}) + \operatorname{D_{KL}}({\Psi_{\sigma}(\nu_t, \mu_t)}|{\nu_t}) + \operatorname{D_{KL}}({\mu_t}|{\Phi_{\sigma}(\nu_t, \mu_t)}) + \operatorname{D_{KL}}({\Phi_{\sigma}(\nu_t, \mu_t)|{\mu_t}})\big)\\
        &- \int_{\mathcal{X}} \nu_t(x)\partial_t \log \Psi_{\sigma}(\nu_t,\mu_t)(x) \mathrm{d}x - \int_{\mathcal{Y}} \mu_t(y)\partial_t \log \Phi_{\sigma}(\nu_t,\mu_t)(y) \mathrm{d}y. 
    \end{aligned}
    \end{multline}
    where the third equality follows from the MF-BR flow \eqref{eq: mean-field-br} and the fact that $\int_{\mathcal{X}}\partial_t \nu_t(x) \mathrm{d}x = \int_{\mathcal{Y}}\partial_t \mu_t(y) \mathrm{d}y = 0$. Using only the first equality from \eqref{eq: log-Psi}, we obtain that
    \begin{align*}
        -\int_{\mathcal{X}} \nu_t(x)\partial_t \log \Psi_{\sigma}(\nu_t,\mu_t)(x) \mathrm{d}x &= -\frac{2}{\sigma^2}\Bigg(\int_{\mathcal{X}} \int_{\mathcal{X}} \frac{\delta^2 F}{\delta \nu^2}(\nu_t,\mu_t,x,z) \Psi_{\sigma}(\nu_t, \mu_t)(x)\partial_t \nu_t(z)\mathrm{d}x\mathrm{d}z\\ 
        &+ \int_{\mathcal{Y}} \int_{\mathcal{X}} \frac{\delta^2 F}{\delta \mu \delta \nu}(\nu_t,\mu_t,x,w) \Psi_{\sigma}(\nu_t, \mu_t)(x) \partial_t \mu_t(w)\mathrm{d}x\mathrm{d}w\\ 
        &- \int_{\mathcal{X}} \int_{\mathcal{X}} \frac{\delta^2 F}{\delta \nu^2}(\nu_t,\mu_t,x,z)\partial_t \nu_t(z)\nu_t(x)\mathrm{d}z\mathrm{d}x\\ &- \int_{\mathcal{X}} \int_{\mathcal{Y}} \frac{\delta^2 F}{\delta \mu \delta \nu}(\nu_t,\mu_t,x,w)\partial_t \mu_t(w) \nu_t(x)\mathrm{d}w\mathrm{d}x\Bigg).
    \end{align*}
    From Assumption \ref{assump:F} and the fact that $\Psi_{\sigma}(\nu_t, \mu_t)$ and $\nu_t$ are probability density functions, we have that
    \begin{align*}
        &\int_{\mathcal{X}} \int_{\mathcal{X}} \left|\frac{\delta^2 F}{\delta \nu^2}(\nu_t,\mu_t,x,z)\partial_t \nu_t(z)\nu_t(x)\right|\mathrm{d}z\mathrm{d}x\\
        &= \int_{\mathcal{X}} \int_{\mathcal{X}} \alpha\left|\frac{\delta^2 F}{\delta \nu^2}(\nu_t,\mu_t,x,z)\left(\Psi_{\sigma}(\nu_t, \mu_t)(z) - \nu_t(z)\right)\nu_t(x)\right|\mathrm{d}z\mathrm{d}x\\
        &\leq \int_{\mathcal{X}} \int_{\mathcal{X}} \alpha\left|\frac{\delta^2 F}{\delta \nu^2}(\nu_t,\mu_t,x,z)\right|\Psi_{\sigma}(\nu_t, \mu_t)(z)\nu_t(x)\mathrm{d}z\mathrm{d}x 
        + \int_{\mathcal{X}} \int_{\mathcal{X}} \alpha\left|\frac{\delta^2 F}{\delta \nu^2}(\nu_t,\mu_t,x,z)\right|\nu_t(z)\nu_t(x)\mathrm{d}z\mathrm{d}x\\
        &\leq 2\alpha C_{\nu, \nu} < \infty.
    \end{align*}
    Similarly, we have that
    \begin{equation*}
        \int_{\mathcal{X}} \int_{\mathcal{Y}} \left|\frac{\delta^2 F}{\delta \mu \delta \nu}(\nu_t,\mu_t,x,w)\partial_t \mu_t(w) \nu_t(x)\right|\mathrm{d}w\mathrm{d}x \leq 2\alpha C_{\mu, \nu} < \infty.
    \end{equation*}
    Therefore, we can apply Fubini's theorem and obtain that
    \begin{multline}
    \label{eq: nu-times-Psi}
    \begin{aligned}
        &-\int_{\mathcal{X}} \nu_t(x)\partial_t \log \Psi_{\sigma}(\nu_t,\mu_t)(x) \mathrm{d}x\\
        &= -\frac{2}{\sigma^2}\Bigg(\int_{\mathcal{X}} \int_{\mathcal{X}} \frac{\delta^2 F}{\delta \nu^2}(\nu_t,\mu_t,x,z) \left(\Psi_{\sigma}(\nu_t, \mu_t)(x) - \nu_t(x)\right)\partial_t \nu_t(z)\mathrm{d}x\mathrm{d}z\\ 
        &+ \int_{\mathcal{Y}} \int_{\mathcal{X}} \frac{\delta^2 F}{\delta \mu \delta \nu}(\nu_t,\mu_t,x,w) \left(\Psi_{\sigma}(\nu_t, \mu_t)(x) - \nu_t(x)\right) \partial_t \mu_t(w)\mathrm{d}x\mathrm{d}w\Bigg)\\
        &= -\frac{2\alpha}{\sigma^2}\int_{\mathcal{X}} \int_{\mathcal{X}} \frac{\delta^2 F}{\delta \nu^2}(\nu_t,\mu_t,x,z) \left(\Psi_{\sigma}(\nu_t, \mu_t)(x) - \nu_t(x)\right)\left(\Psi_{\sigma}(\nu_t, \mu_t)(z) - \nu_t(z)\right)\mathrm{d}x\mathrm{d}z\\ 
        &-\frac{2\alpha}{\sigma^2} \int_{\mathcal{Y}} \int_{\mathcal{X}} \frac{\delta^2 F}{\delta \mu \delta \nu}(\nu_t,\mu_t,x,w) \left(\Psi_{\sigma}(\nu_t, \mu_t)(x) - \nu_t(x)\right)\left(\Phi_{\sigma}(\nu_t, \mu_t)(w) - \mu_t(w)\right)\mathrm{d}x\mathrm{d}w\\
        &\leq -\frac{2\alpha}{\sigma^2}\int_{\mathcal{Y}} \int_{\mathcal{X}} \frac{\delta^2 F}{\delta \mu \delta \nu}(\nu_t,\mu_t,x,w) \left(\Psi_{\sigma}(\nu_t, \mu_t)(x) - \nu_t(x)\right)\left(\Phi_{\sigma}(\nu_t, \mu_t)(w) - \mu_t(w)\right)\mathrm{d}x\mathrm{d}w,
    \end{aligned}
    \end{multline}
    where the last inequality follows from 
    \begin{equation*}
        \int_{\mathcal{X}} \int_{\mathcal{X}} \frac{\delta^2 F}{\delta \nu^2}(\nu_t,\mu_t,x,z) \left(\Psi_{\sigma}(\nu_t, \mu_t)(x) - \nu_t(x)\right)\left(\Psi_{\sigma}(\nu_t, \mu_t)(z) - \nu_t(z)\right)\mathrm{d}x\mathrm{d}z \geq 0
    \end{equation*} 
    due to the convexity of the map $\nu \mapsto F(\nu, \mu)$.

   Performing similar calculations to \eqref{eq: log-Psi}, \eqref{eq: nu-times-Psi}, and using Assumption \ref{assump:F}, we can apply Fubini's theorem and obtain that
    \begin{multline}
    \label{eq: mu-times-Phi}
    \begin{aligned}
        &-\int_{\mathcal{Y}} \mu_t(y)\partial_t \log \Phi_{\sigma}(\nu_t,\mu_t)(y) \mathrm{d}y\\
        &= \frac{2}{\sigma^2}\Bigg(\int_{\mathcal{Y}} \int_{\mathcal{Y}} \frac{\delta^2 F}{\delta \mu^2}(\nu_t,\mu_t,y,w) \left(\Phi_{\sigma}(\nu_t, \mu_t)(y) - \mu_t(y)\right)\partial_t \mu_t(w)\mathrm{d}y\mathrm{d}w\\
        &+ \int_{\mathcal{X}} \int_{\mathcal{Y}} \frac{\delta^2 F}{\delta \nu \delta \mu}(\nu_t,\mu_t,y,z) \left(\Phi_{\sigma}(\nu_t, \mu_t)(y) - \mu_t(y)\right) \partial_t \nu_t(z)\mathrm{d}y\mathrm{d}z\Bigg)\\
        &\leq \frac{2\alpha}{\sigma^2}\int_{\mathcal{X}} \int_{\mathcal{Y}} \frac{\delta^2 F}{\delta \nu \delta \mu}(\nu_t,\mu_t,y,z) \left(\Phi_{\sigma}(\nu_t, \mu_t)(y) - \mu_t(y)\right)\left(\Psi_{\sigma}(\nu_t, \mu_t)(z) - \nu_t(z)\right)\mathrm{d}y\mathrm{d}z,
    \end{aligned}
    \end{multline}
    where the last inequality follows from 
    \begin{equation*}
        \int_{\mathcal{Y}} \int_{\mathcal{Y}} \frac{\delta^2 F}{\delta \mu^2}(\nu_t,\mu_t,y,w) \left(\Phi_{\sigma}(\nu_t, \mu_t)(y) - \mu_t(y)\right)\left(\Phi_{\sigma}(\nu_t, \mu_t)(w) - \mu_t(w)\right)\mathrm{d}y\mathrm{d}w \leq 0
    \end{equation*} 
    due to the concavity of the map $\mu \mapsto F(\nu, \mu)$.

    Combining \eqref{eq: derivative-KL}, \eqref{eq: nu-times-Psi} and \eqref{eq: mu-times-Phi} gives that
    \begin{align*}
        &\frac{\mathrm{d}}{\mathrm{d}t} \left(\operatorname{D_{KL}}({\nu_t}|{\Psi_{\sigma}(\nu_t,\mu_t)}) + \operatorname{D_{KL}}({\mu_t}|{\Phi_{\sigma}(\nu_t,\mu_t)})\right)\\
        &= -\alpha\big(\operatorname{D_{J}}({\nu_t}|{\Psi_{\sigma}(\nu_t, \mu_t)}) + \operatorname{D_{J}}({\mu_t}|{\Phi_{\sigma}(\nu_t, \mu_t)})\big)\\ &- \int_{\mathcal{X}} \nu_t(x)\partial_t \log \Psi_{\sigma}(\nu_t,\mu_t)(x) \mathrm{d}x - \int_{\mathcal{Y}} \mu_t(y)\partial_t \log \Phi_{\sigma}(\nu_t,\mu_t)(y) \mathrm{d}y\\
        &\leq -\alpha\big(\operatorname{D_{J}}({\nu_t}|{\Psi_{\sigma}(\nu_t, \mu_t)}) + \operatorname{D_{J}}({\mu_t}|{\Phi_{\sigma}(\nu_t, \mu_t)})\big)\\ 
        &- \frac{2\alpha}{\sigma^2}\int_{\mathcal{Y}} \int_{\mathcal{X}} \frac{\delta^2 F}{\delta \mu \delta \nu}(\nu_t,\mu_t,x,w) \left(\Psi_{\sigma}(\nu_t, \mu_t)(x) - \nu_t(x)\right)\left(\Phi_{\sigma}(\nu_t, \mu_t)(w) - \mu_t(w)\right)\mathrm{d}x\mathrm{d}w\\ 
        &+ \frac{2\alpha}{\sigma^2}\int_{\mathcal{X}} \int_{\mathcal{Y}} \frac{\delta^2 F}{\delta \nu \delta \mu}(\nu_t,\mu_t,y,z) \left(\Phi_{\sigma}(\nu_t, \mu_t)(y) - \mu_t(y)\right)\left(\Psi_{\sigma}(\nu_t, \mu_t)(z) - \nu_t(z)\right)\mathrm{d}y\mathrm{d}z.
    \end{align*}
    Again, using Assumption \ref{assump:F} to justify the use of Fubini's theorem and Lemma \ref{lemma: symmetry-flat}, the last two terms cancel and we obtain that
    \begin{equation*}
        \frac{\mathrm{d}}{\mathrm{d}t} \left(\operatorname{D_{KL}}({\nu_t}|{\Psi_{\sigma}(\nu_t,\mu_t)}) + \operatorname{D_{KL}}({\mu_t}|{\Phi_{\sigma}(\nu_t,\mu_t)})\right)
        \leq -\alpha\big(\operatorname{D_{J}}({\nu_t}|{\Psi_{\sigma}(\nu_t, \mu_t)}) + \operatorname{D_{J}}({\mu_t}|{\Phi_{\sigma}(\nu_t, \mu_t)})\big).
    \end{equation*}
    \emph{Step 3: Convergence of the MF-BR dynamics in $\operatorname{D_{KL}}$ and $\operatorname{NI}$ error:}
    From the inequality above, we have that
    \begin{multline*}
        \frac{\mathrm{d}}{\mathrm{d}t} \left(\operatorname{D_{KL}}({\nu_t}|{\Psi_{\sigma}(\nu_t,\mu_t)}) + \operatorname{D_{KL}}({\mu_t}|{\Phi_{\sigma}(\nu_t,\mu_t)})\right) \leq -\alpha\big(\operatorname{D_{KL}}({\nu_t}|{\Psi_{\sigma}(\nu_t, \mu_t)}) + \operatorname{D_{KL}}({\Psi_{\sigma}(\nu_t, \mu_t)}|{\nu_t})\\
+ \operatorname{D_{KL}}({\mu_t}|{\Phi_{\sigma}(\nu_t, \mu_t)}) + \operatorname{D_{KL}}({\Phi_{\sigma}(\nu_t, \mu_t)|{\mu_t}})\big)
\leq -\alpha\big(\operatorname{D_{KL}}({\nu_t}|{\Psi_{\sigma}(\nu_t, \mu_t)}) + \operatorname{D_{KL}}({\mu_t}|{\Phi_{\sigma}(\nu_t, \mu_t)})\big),
    \end{multline*}
    and hence we deduce the conclusion from Gronwall's inequality. The convergence with respect to $t \mapsto \operatorname{NI}({\nu_t}, {\mu_t})$ with rate $\mathcal{O}\left(\frac{\sigma^2}{2}e^{-\alpha t}\right)$ follows from Lemma \ref{lemma: NI-equals-KL}.
\end{proof}
Next, we obtain convergence with rate $\mathcal{O}\left(e^{-\alpha t}\right)$ of the MF-BR flow to the MNE of \eqref{eq:F-nonlinear} in terms of $\operatorname{D_{KL}}$ divergence and TV distance.
\begin{proof}[Proof of Corollary \ref{corollary: KL-TV}]
    The convergence with respect to $t \mapsto \operatorname{D_{KL}}({\nu_t}|\nu_{\sigma}^*) + \operatorname{D_{KL}}({\mu_t}|\mu_{\sigma}^*)$ with rate $\mathcal{O}\left(e^{-\alpha t}\right)$ follows from Lemma \ref{lemma: NI-equals-KL}. The convergence with respect to $t \mapsto \operatorname{TV}^2({\nu_t},\nu_{\sigma}^*) + \operatorname{TV}^2({\mu_t},\mu_{\sigma}^*)$ with rate $\mathcal{O}\left(e^{-\alpha t}\right)$ follows from Pinsker's inequality.
\end{proof}
Next, we obtain convergence with rate $\mathcal{O}\left(e^{-\alpha t}\right)$ (independent of $\sigma$) of the MF-BR flow in terms of the NI error when $F$ is bilinear.
\begin{proof}[Proof of Corollary \ref{corollary: convergence-NI-linear-F-FP}]
    If we set
    \begin{equation*}
        F(\nu, \mu) = \int_{\mathcal{Y}}\int_{\mathcal{X}} f(x,y) \nu(\mathrm{d}x) \mu(\mathrm{d}y),
    \end{equation*} 
    with $f:\mathcal{X} \times \mathcal{Y} \to \mathbb{R}$ bounded, then Assumption \ref{assumption: assump-F-conv-conc}, \ref{assumption: boundedness-first-flat}, \ref{assump:F} still hold according to Remark \ref{remark:example-f}, and moreover we have equality in Lemma \ref{lemma: NI-equals-KL}. Therefore, the convergence estimate with respect to the NI error reads
    \begin{align*}
        \operatorname{NI}({\nu_t}, {\mu_t}) &\leq \frac{\sigma^2}{2}e^{-\alpha t}\left(\operatorname{D_{KL}}({\nu_0}|{\Psi_{\sigma}(\nu_0, \mu_0)}) + \operatorname{D_{KL}}({\mu_0}|{\Phi_{\sigma}(\nu_0, \mu_0)})\right)\\ &= \frac{\sigma^2}{2}e^{-\alpha t} \frac{2}{\sigma^2} \operatorname{NI}({\nu_0}, {\mu_0})\\
        &= e^{-\alpha t}\operatorname{NI}({\nu_0}, {\mu_0}).
    \end{align*}
\end{proof}

\section*{Acknowledgements}
We would like to thank the anonymous referees for their comments which helped us to improve the paper. R-AL was supported by the EPSRC Centre for Doctoral Training in Mathematical Modelling, Analysis and Computation (MAC-MIGS) funded by the UK Engineering and Physical Sciences Research Council (grant EP/S023291/1), Heriot-Watt University and the University of Edinburgh. LS acknowledges the support of the UKRI Prosperity Partnership Scheme (FAIR) under EPSRC Grant EP/V056883/1 and the Alan Turing Institute.

\bibliographystyle{abbrv}
\bibliography{sn-article-template/sn-bibliography}

\appendix
\numberwithin{equation}{section}
\makeatletter 
\newcommand{\section@cntformat}{Appendix \thesection:\ }
\makeatother
\section{Technical results and proofs}
\label{appendix: AppC}
In this section, we present the proofs of the remaining results formulated in Section \ref{sec:Main-Results} of the paper. 

\subsection{Auxiliary results.}
In this subsection, we present the proofs of Lemma \ref{proposition:PsiPhiBounds}, Corollary \ref{corollary: A.6}, Lemma \ref{lemma:integrability-derivative-of-entropy} and Lemma \ref{lemma: NI-equals-KL} but first we recall a result based on \cite{conforti_game_2020}, which will be useful throughout the appendix, regarding the characterization of MNEs via first-order conditions (see also \cite[Proposition $2.5$]{10.1214/20-AIHP1140}). 

\begin{proposition}[\cite{conforti_game_2020}, Theorem $3.1$]
\label{prop: 2.5}
Assume that $F$ admits first-order flat derivative (cf. Definition \ref{def:fderivative} in Appendix \ref{app: AppB}) and that Assumption \ref{assumption: assump-F-conv-conc} holds. Then, the pair $(\nu_{\sigma}^*, \mu_{\sigma}^*) \in \mathcal{P}_{\text{ac}}(\mathcal{X}) \times \mathcal{P}_{\text{ac}}(\mathcal{Y})$ is a MNE of \eqref{eq:F-nonlinear}, i.e., $\nu_{\sigma}^* \in \argmin_{\nu' \in \mathcal{P}(\mathcal{X})} V^{\sigma}(\nu',\mu_{\sigma}^*)$ and $\mu_{\sigma}^* \in \argmax_{\mu' \in \mathcal{P}(\mathcal{Y})} V^{\sigma}(\nu_{\sigma}^*,\mu')$, if and only if it satisfies the following first-order condition for all $(x, y) \in \mathcal{X} \times \mathcal{Y}$ Lebesgue almost surely:
\begin{equation*}
    \frac{\delta F}{\delta \nu}(\nu_{\sigma}^*, \mu_{\sigma}^*, x) + \frac{\sigma^2}{2}\log \left(\frac{\nu_{\sigma}^*(x)}{\pi(x)}\right) = \operatorname{constant}, 
\end{equation*}
\begin{equation*}
    \frac{\delta F}{\delta \mu}(\nu_{\sigma}^*, \mu_{\sigma}^*, y) - \frac{\sigma^2}{2}\log \left(\frac{\mu_{\sigma}^*(y)}{\rho(y)}\right) = \operatorname{constant}.
\end{equation*}
\end{proposition}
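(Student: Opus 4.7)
The plan is to exploit the saddle-point structure \eqref{eq: mne-argmin-argmax} and Assumption \ref{assumption: assump-F-conv-conc} to reduce the statement to two single-player convex variational characterizations. The pair $(\nu_\sigma^*, \mu_\sigma^*)$ satisfies \eqref{eq: nashdefmeasures} if and only if $\nu_\sigma^*$ minimizes the strictly convex functional $J(\nu) := F(\nu, \mu_\sigma^*) + \tfrac{\sigma^2}{2}\operatorname{D_{KL}}(\nu|\pi)$ on $\mathcal{P}_{\text{ac}}(\mathcal{X})$ and $\mu_\sigma^*$ maximizes the strictly concave functional $G(\mu) := F(\nu_\sigma^*, \mu) - \tfrac{\sigma^2}{2}\operatorname{D_{KL}}(\mu|\rho)$ on $\mathcal{P}_{\text{ac}}(\mathcal{Y})$. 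I will argue both directions for the $\nu$-component; the $\mu$-component follows symmetrically, with the sign reversal coming from concavity of $F$ in $\mu$.

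For the ``only if'' direction, fix any $\nu \in \mathcal{P}_{\text{ac}}(\mathcal{X})$ and consider the convex combination $\nu_\epsilon := (1-\epsilon)\nu_\sigma^* + \epsilon \nu$ for $\epsilon \in (0,1]$. Combining $J(\nu_\epsilon) \geq J(\nu_\sigma^*)$ with the flat-derivative identity for $F$ and the one-sided derivative
\begin{equation*}
    \left.\frac{d}{d\epsilon}\right|_{\epsilon = 0^+} \operatorname{D_{KL}}(\nu_\epsilon | \pi) = \int_{\mathcal{X}} \log\frac{\nu_\sigma^*(x)}{\pi(x)}(\nu - \nu_\sigma^*)(dx),
\end{equation*}
where the additive $1$ appearing in $1 + \log(\nu_\sigma^*/\pi)$ drops out because $\int d(\nu - \nu_\sigma^*) = 0$, yields the variational inequality
\begin{equation*}
    \int_{\mathcal{X}} \Big[\frac{\delta F}{\delta \nu}(\nu_\sigma^*, \mu_\sigma^*, x) + \frac{\sigma^2}{2}\log\frac{\nu_\sigma^*(x)}{\pi(x)}\Big](\nu - \nu_\sigma^*)(dx) \geq 0 \quad \text{for all } \nu \in \mathcal{P}_{\text{ac}}(\mathcal{X}).
\end{equation*}
A standard mass-exchange argument between level sets of the bracket then forces the bracketed expression to be a constant (the Lagrange multiplier of the normalization constraint) Lebesgue-a.s.\ on $\mathcal{X}$. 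For the converse, assume the first-order condition and combine the convexity inequality \eqref{eq:convexF} with the tangent inequality $\operatorname{D_{KL}}(\nu|\pi) - \operatorname{D_{KL}}(\nu_\sigma^*|\pi) \geq \int \log(\nu_\sigma^*/\pi)(\nu - \nu_\sigma^*)(dx)$ for the strictly convex relative entropy to obtain
\begin{equation*}
    J(\nu) - J(\nu_\sigma^*) \geq \int_{\mathcal{X}} \Big[\frac{\delta F}{\delta \nu}(\nu_\sigma^*, \mu_\sigma^*, x) + \frac{\sigma^2}{2}\log\frac{\nu_\sigma^*(x)}{\pi(x)}\Big](\nu - \nu_\sigma^*)(dx) = 0,
\end{equation*}
where the last equality uses constancy of the bracket and $\int d(\nu - \nu_\sigma^*) = 0$. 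Hence $\nu_\sigma^* \in \argmin J$, and the analogous argument based on \eqref{eq:concaveF} gives $\mu_\sigma^* \in \argmax G$.

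The principal technical point will be justifying the one-sided derivative of the relative entropy used above: the logarithmic singularity at points where $\nu_\sigma^*/\pi$ is small or large requires a careful monotone/dominated convergence argument, typically exploiting convexity of $t \mapsto t\log t$ to split the integrand into nonnegative and nonpositive parts. A secondary subtlety is that the mass-exchange argument initially yields constancy only on the support of $\nu_\sigma^*$; promoting this to Lebesgue-a.s.\ on $\{\pi > 0\}$ relies on the observation that the entropy penalty (together with Assumption \ref{assumption: boundedness-first-flat}) prevents $\nu_\sigma^*$ from vanishing on any subset of positive $\pi$-measure, so that $\nu_\sigma^*$ and $\pi$ share the same support and the identity extends consistently.
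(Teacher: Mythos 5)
Your approach is sound, but note that the paper itself does not prove this proposition: it is imported verbatim from \cite{conforti_game_2020} (Theorem 3.1), so there is no internal proof to compare against. What you propose is precisely the standard argument behind that cited result: decouple the saddle point into the two single-player problems \eqref{eq: mne-argmin-argmax}, derive the variational inequality from one-sided differentiation along the segment $\nu_\epsilon = (1-\epsilon)\nu_\sigma^* + \epsilon\nu$, upgrade it to pointwise constancy of the bracket, and run the Bregman-type inequality for $\operatorname{D_{KL}}$ together with \eqref{eq:convexF} for the converse. The converse direction as you write it is complete modulo the routine remark that one may restrict to $\nu$ with $\operatorname{D_{KL}}(\nu|\pi)<\infty$ (otherwise $J(\nu)=+\infty$ trivially dominates), and that $\int \log(\nu_\sigma^*/\pi)\,(\nu-\nu_\sigma^*)(\mathrm{d}x)$ is well defined because the first-order condition makes $\log(\nu_\sigma^*/\pi)$ equal to a constant minus $\tfrac{2}{\sigma^2}\tfrac{\delta F}{\delta\nu}$, which is controlled by Definition \ref{def:fderivative}.

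Two points deserve more care than you give them. First, the one-sided derivative of $\epsilon\mapsto\operatorname{D_{KL}}(\nu_\epsilon|\pi)$: the clean route is to observe that for fixed $x$ the difference quotients of the convex map $\epsilon\mapsto h(\nu_\epsilon(x))$, $h(t)=t\log(t/\pi(x))$, decrease monotonically as $\epsilon\downarrow 0$ and are dominated at $\epsilon=1$ by an integrable function whenever $\operatorname{D_{KL}}(\nu|\pi)<\infty$; monotone convergence then yields the derivative (a priori with value in $[-\infty,\infty)$), and minimality of $\nu_\sigma^*$ rules out $-\infty$. You flag this but should execute it, since it is the only genuinely analytic step. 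Second, you invoke Assumption \ref{assumption: boundedness-first-flat} to show that $\nu_\sigma^*$ charges every positive-Lebesgue-measure subset of $\mathcal{X}$, but that assumption is not among the hypotheses of the proposition; the correct mechanism is already contained in your variational inequality, since testing against a $\nu$ supported where $\nu_\sigma^*=0$ would make the left-hand side $-\infty$ and contradict the inequality, so no extra boundedness is needed. With those two repairs the argument is a complete and self-contained proof of the cited statement.
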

\begin{proof}[Proof of Lemma \ref{proposition:PsiPhiBounds}]
From Assumption \ref{assumption: boundedness-first-flat}, we have the estimates
\begin{equation}
\label{eq:bound-psi-unnormalized-density}
\exp \left( -\frac{2}{\sigma^2} C_{\nu} - U^{\pi} \left(x\right) \right) \leq \exp \left(- \frac{2}{\sigma^2}\frac{\delta F}{\delta \nu}(\nu, \mu, x)  - U^{\pi} \left(x\right) \right) \leq \exp \left( \frac{2}{\sigma^2} C_{\nu} - U^{\pi} \left(x\right) \right),
\end{equation}
\begin{equation}
\label{eq:bound-psi-normalization-constant}
\exp\left(-\frac{2C_{\nu}}{\sigma^2}\right) \leq Z(\nu, \mu) \leq \exp\left(\frac{2C_{\nu}}{\sigma^2}\right).
\end{equation}
Thus, we obtain \eqref{eq: estimate-psi} with constant \(K_{\Psi_{\sigma}} = \frac{1}{k_{\Psi_{\sigma}}} = \exp \left( \frac{4}{\sigma^2} C_{\nu} \right)> 1\). Moreover, by construction, 
\[
\int_{\mathcal{X}} \Psi_{\sigma}(\nu, \mu) \left(\mathrm{d}x\right) = \int_{\mathcal{X}} \Psi_{\sigma}(\nu, \mu) \left(x\right) \mathrm{d}x 
= \frac{1}{Z(\nu,\mu)}\int_{\mathcal{X}}  \exp \left(- \frac{2}{\sigma^2} \frac{\delta F}{\delta \nu}(\nu, \mu, x) - U^{\pi} \left(x\right)\right) \mathrm{d}x = 1,
\]
and therefore \(\Psi_{\sigma}(\nu, \mu)\in \mathcal P \left(\mathcal{X}\right)\). One can argue similarly for $\Phi_{\sigma}(\nu, \mu)$. 
\end{proof}

\begin{proof}[Proof of Corollary \ref{corollary: A.6}]
From \eqref{eq: estimate-psi} and \eqref{eq: estimate-phi}, we have that, for all $t \geq 0,$
\begin{equation*}
    k_{\Psi_{\sigma}}e^{-U^{\pi}(x)} \leq \Psi_{\sigma}(\nu_t, \mu_t)(x), \quad k_{\Phi_{\sigma}}e^{-U^{\rho}(y)} \leq \Phi_{\sigma}(\nu_t, \mu_t)(y).
\end{equation*}
By Duhamel's formula we can rewrite equations in \eqref{eq: mean-field-br} as
\begin{equation}
\label{equation:duhamel-nu}
\nu_t(x) = e^{-\alpha t}\nu_0(x) + \int_0^t \alpha e^{-\alpha (t-s)} \Psi_{\sigma}(\nu_s, \mu_s)(x) \mathrm{d}s,
\end{equation}
\begin{equation}
\label{equation:duhamel-mu}
\mu_t(y) = e^{-\alpha t}\mu_0(y) + \int_0^t \alpha e^{-\alpha (t-s)} \Phi_{\sigma}(\nu_s, \mu_s)(y) \mathrm{d}s.
\end{equation}
Therefore, using \eqref{equation:duhamel-nu} and \eqref{equation:duhamel-mu}, it follows that
\begin{equation*}
\nu_t \left(x\right) \geq \int_0^t \alpha e^{-\alpha (t-s)} \Psi_{\sigma}(\nu_s, \mu_s)\left(x\right) \mathrm{d}s \geq k_{\Psi_{\sigma}}e^{-U^{\pi}(x)} \int_0^t \alpha e^{-\alpha (t-s)} \mathrm{d}s = \left( 1 - e^{-\alpha t}\right) k_{\Psi_{\sigma}} e^{-U^{\pi}\left(x\right)},
\end{equation*}
\begin{equation*}
\mu_t \left(y\right) \geq \int_0^t \alpha e^{-\alpha (t-s)} \Phi_{\sigma}(\nu_s, \mu_s)\left(y\right) \mathrm{d}s
\geq k_{\Phi_{\sigma}}e^{-U^{\rho}(y)} \int_0^t \alpha e^{-\alpha (t-s)} \mathrm{d}s = \left(1 - e^{-\alpha t}\right) k_{\Phi_{\sigma}} e^{-U^{\rho}\left(y\right)}.
\end{equation*}
The proof for the upper bounds is similar.
\end{proof}

\begin{proof}[Proof of Lemma \ref{lemma:integrability-derivative-of-entropy}]
First, we derive lower and upper bounds on $\Psi_{\sigma}(\nu_t, \mu_t)(x)\log\frac{\nu_t\left(x\right)}{e^{-U^{\pi}\left(x\right)}}.$ Using the bounds given by \eqref{equation:nu-density-bound} and \eqref{eq: estimate-psi}, we have 
\begin{align*}
\Psi_{\sigma}(\nu_t, \mu_t)(x)\log\frac{\nu_t\left(x\right)}{e^{-U^{\pi}\left(x\right)}}
&\geq \Psi_{\sigma}(\nu_t, \mu_t)(x)\log \frac{ \left( 1 - e^{-\alpha t} \right) k_{\Psi_{\sigma}} e^{-U^{\pi}\left(x\right)} }{ e^{-U^{\pi}\left(x\right)} }\\
&= \Psi_{\sigma}(\nu_t, \mu_t)(x)\log \left(\left(1 - e^{-\alpha t}\right) k_{\Psi_{\sigma}}\right)\\ &\geq \log \left( \left(1 - e^{-\alpha t} \right) k_{\Psi_{\sigma}}  \right) K_{\Psi_{\sigma}} e^{-U^{\pi}\left(x\right)} =: g_1\left(x\right),
\end{align*}
where the last inequality follows from the fact that $k_{\Psi_{\sigma}} \in (0,1)$ so that $\log \left( \left(1 - e^{-\alpha t} \right) k_{\Psi_{\sigma}}  \right) < 0.$ 

The upper bound is obtained as follows. From Duhamel's formula \eqref{equation:duhamel-nu} and \eqref{eq: estimate-psi}, we have that
\begin{align*}
\log\frac{\nu_t\left(x\right)}{e^{-U^{\pi}\left(x\right)}}
&= \log \left( e^{-\alpha t}\frac{\nu_0 \left(x\right)}{e^{-U^{\pi}\left(x\right)}} + \int_0^t \alpha e^{-\alpha (t-s)}\frac{\Psi_{\sigma}(\nu_s, \mu_s) \left(x\right)}{e^{-U^{\pi}\left(x\right)}} ds \right) \\
&\leq \log \left( e^{-\alpha t}\frac{\nu_0 \left(x\right)}{e^{-U^{\pi}\left(x\right)}} + \int_0^t \alpha e^{-\alpha (t-s)} K_{\Psi_{\sigma}} ds \right)\\ &= \log \left( e^{-\alpha t}\frac{\nu_0 \left(x\right)}{e^{-U^{\pi}\left(x\right)}} + \left(1 - e^{-\alpha t}\right) K_{\Psi_{\sigma}} \right) \\
&\leq \log \left(\left(1 - e^{-\alpha t}\right) K_{\Psi_{\sigma}} \right) + \frac{\nu_0 \left(x\right)}{e^{-U^{\pi}\left(x\right)}} \frac{e^{-\alpha t}}{\left(1 - e^{-\alpha t}\right) K_{\Psi_{\sigma}}}\\
&\leq \log K_{\Psi_{\sigma}} + \frac{\nu_0(x)}{K_{\Psi_{\sigma}}e^{-U^{\pi}\left(x\right)}}\kappa_s,
\end{align*}
where in the second inequality we used the inequality \(\log \left(x + y\right) \leq \log x + \frac{y}{x}\) and in the last inequality we maximize over $t \geq s$ and take $\kappa_s \coloneqq \sup_{t \geq s} \frac{e^{-\alpha t}}{\left(1 - e^{-\alpha t}\right)} = \frac{e^{-\alpha s}}{\left(1 - e^{-\alpha s}\right)}$. Finally, the upper bound is given by
\begin{align*}
\Psi_{\sigma}(\nu_t, \mu_t)(x)\log\frac{\nu_t\left(x\right)}{e^{-U^{\pi}\left(x\right)}}
&\leq \left(\log K_{\Psi_{\sigma}} + \frac{\nu_0(x)}{K_{\Psi_{\sigma}}e^{-U^{\pi}\left(x\right)}}\kappa_s\right) \Psi_{\sigma}(\nu_t, \mu_t) \left(x\right)\\ 
&\leq \left(\log K_{\Psi_{\sigma}} + \frac{\nu_0(x)}{K_{\Psi_{\sigma}}e^{-U^{\pi}\left(x\right)}}\kappa_s\right) K_{\Psi_{\sigma}} e^{-U^{\pi}\left(x\right)}\\ &= K_{\Psi_{\sigma}} e^{-U^{\pi}\left(x\right)}\log K_{\Psi_{\sigma}} + \kappa_s\nu_0\left(x\right) =: f_1\left(x\right).
\end{align*}
Now, consider the second term $\nu_t\left(x\right)\log \frac{\nu_t\left(x\right)}{e^{-U^{\pi}\left(x\right)}}$. Observe that 
\begin{equation*}
    e^{-\alpha t} + \int_0^t \alpha e^{-\alpha\left(t-s\right)} \mathrm{d}s = 1.
\end{equation*}
Hence, applying the convexity of the map \(\psi: z \mapsto z\log z\) to Duhamel's formula \eqref{equation:duhamel-nu}, we have by Jensen's inequality that
\begin{align*}
\frac{\nu_t\left(x\right)}{e^{-U^{\pi}\left(x\right)}} \log \frac{\nu_t\left(x\right)}{e^{-U^{\pi}\left(x\right)}} &= \psi\left(\frac{\nu_t\left(x\right)}{e^{-U^{\pi}\left(x\right)}}\right)\\ &= \psi \left(e^{-\alpha t} \frac{\nu_0\left(x\right)}{e^{-U^{\pi}\left(x\right)}} + \int_0^t \alpha e^{-\alpha\left(t-s\right)}\frac{\Psi_{\sigma}(\nu_s, \mu_s)\left(x\right)}{e^{-U^{\pi}\left(x\right)}} \mathrm{d}s\right)\\
&\leq e^{-\alpha t} \psi \left(\frac{\nu_0\left(x\right)}{e^{-U^{\pi}\left(x\right)}}\right) + \int_0^t \alpha e^{-\alpha\left(t-s\right)}\psi\left(\frac{\Psi_{\sigma}(\nu_s, \mu_s)\left(x\right)}{e^{-U^{\pi}\left(x\right)}}\right) \mathrm{d}s\\
&= e^{-\alpha t} \frac{\nu_0\left(x\right)}{e^{-U^{\pi}\left(x\right)}}\log \frac{\nu_0\left(x\right)}{e^{-U^{\pi}\left(x\right)}} + \int_0^t \alpha e^{-\alpha\left(t-s\right)}\frac{\Psi_{\sigma}(\nu_s, \mu_s)\left(x\right)}{e^{-U^{\pi}\left(x\right)}} \log \frac{\Psi_{\sigma}(\nu_s, \mu_s)\left(x\right)}{e^{-U^{\pi}\left(x\right)}} \mathrm{d}s.
\end{align*}
Therefore, multiplying the inequality above by $e^{-U^{\pi}\left(x\right)},$ we obtain
\begin{align*}
    \nu_t\left(x\right) \log \frac{\nu_t\left(x\right)}{e^{-U^{\pi}\left(x\right)}} &\leq e^{-\alpha t} \nu_0\left(x\right)\log \frac{\nu_0\left(x\right)}{e^{-U^{\pi}\left(x\right)}} + \int_0^t \alpha e^{-\alpha\left(t-s\right)}\Psi_{\sigma}(\nu_s, \mu_s)\left(x\right) \log \frac{\Psi_{\sigma}(\nu_s, \mu_s)\left(x\right)}{e^{-U^{\pi}\left(x\right)}} \mathrm{d}s\\
    &\leq e^{-\alpha t} \nu_0\left(x\right)\log \frac{\nu_0\left(x\right)}{e^{-U^{\pi}\left(x\right)}} + \int_0^t \alpha e^{-\alpha \left(t-s\right)} \Psi_{\sigma}(\nu_s,\mu_s) \left(x\right)\log K_{\Psi_{\sigma}} \mathrm{d}s \\
    &\leq e^{-\alpha t} \nu_0\left(x\right)\log \frac{\nu_0\left(x\right)}{e^{-U^{\pi}\left(x\right)}} + \int_0^t \alpha e^{-\alpha \left(t-s\right)} K_{\Psi_{\sigma}} e^{-U^{\pi}\left(x\right)}\log K_{\Psi_{\sigma}} \mathrm{d}s \\
    &\leq \max\left(\nu_0\left(x\right)\log \frac{\nu_0\left(x\right)}{e^{-U^{\pi}\left(x\right)}},0\right) + K_{\Psi_{\sigma}} e^{-U^{\pi}\left(x\right)}\log K_{\Psi_{\sigma}} =: -g_2 \left(x\right),
\end{align*}
where the second and third inequalities follow from \eqref{eq: estimate-psi}. 

For the lower bound, we observe that 
\[
\nu_t\left(x\right) \log \frac{\nu_t\left(x\right)}{e^{-U^{\pi}\left(x\right)}} = \frac{\nu_t\left(x\right)}{e^{-U^{\pi}\left(x\right)}} e^{-U^{\pi}\left(x\right)}\log \frac{\nu_t\left(x\right)}{e^{-U^{\pi}\left(x\right)}} \geq -\frac 1e e^{-U^{\pi}\left(x\right)} =: -f_2 \left(x\right),
\]
where we used the fact that the map $z \mapsto z\log z$ is continuous with the global minimum at $z=1/e.$ The conclusion follows if  we set \(f(x) := f_1(x) + f_2(x)\) and \(g(y) := g_1(y) + g_2(y)\). One could argue similarly to obtain $\hat{g}$ and $\hat{f}.$ 
\end{proof}
\begin{proof}[Proof of Lemma \ref{lemma: NI-equals-KL}]
\emph{Step 1: Proof of the left-hand side inequality.}
Using \eqref{eq:convexF} from Assumption \ref{assumption: assump-F-conv-conc}, it follows that
    \begin{align*}
		V^{\sigma}(\nu, \mu_{\sigma}^*) - V^{\sigma}(\nu_{\sigma}^*, \mu_{\sigma}^*) &\geq \int_{\mathcal{X}} \frac{\delta F}{\delta \nu}(\nu_{\sigma}^*, \mu_{\sigma}^*,x) (\nu-\nu_{\sigma}^*)(\mathrm{d}x) + \frac{\sigma^2}{2}\operatorname{D_{KL}}(\nu|\pi) - \frac{\sigma^2}{2}\operatorname{D_{KL}}(\nu_{\sigma}^*|\pi)\\
        &= \int_{\mathcal{X}} \left(\frac{\delta F}{\delta \nu}(\nu_{\sigma}^*, \mu_{\sigma}^*,x) + \frac{\sigma^2}{2}\log\left(\frac{\nu_{\sigma}^*(x)}{\pi(x)}\right)\right) (\nu-\nu_{\sigma}^*)(\mathrm{d}x)\\ &- \frac{\sigma^2}{2}\int_{\mathcal{X}} \log\left(\frac{\nu_{\sigma}^*(x)}{\pi(x)}\right)(\nu-\nu_{\sigma}^*)(\mathrm{d}x)
        + \frac{\sigma^2}{2}\int_{\mathcal{X}} \log\left(\frac{\nu(x)}{\pi(x)}\right)\nu(\mathrm{d}x)\\ &- \frac{\sigma^2}{2}\int_{\mathcal{X}} \log\left(\frac{\nu_{\sigma}^*(x)}{\pi(x)}\right)\nu_{\sigma}^*(\mathrm{d}x)
        = \frac{\sigma^2}{2}\operatorname{D_{KL}}(\nu|\nu_{\sigma}^*),
	\end{align*}
 where the last equality follows from Proposition \ref{prop: 2.5}. Similarly, using \eqref{eq:concaveF} from Assumption \ref{assumption: assump-F-conv-conc} and Proposition \ref{prop: 2.5}, it follows that
        \begin{equation*}
            V^{\sigma}(\nu_{\sigma}^*, \mu) - V^{\sigma}(\nu_{\sigma}^*, \mu_{\sigma}^*) \leq \ - \frac{\sigma^2}{2}\operatorname{D_{KL}}(\mu|\mu_{\sigma}^*). 
        \end{equation*}
Recalling that $\operatorname{NI}(\nu, \mu) = \max_{\mu' \in \mathcal{P}(\mathcal{Y})}V^{\sigma}(\nu,\mu') - \min_{\nu' \in \mathcal{P}(\mathcal{X})}V^{\sigma}(\nu',\mu),$ and adding the previous inequalities gives
\begin{align*}
    \operatorname{NI}(\nu, \mu) = \max_{\mu' \in \mathcal{P}(\mathcal{Y})}V^{\sigma}(\nu,\mu') - \min_{\nu' \in \mathcal{P}(\mathcal{X})}V^{\sigma}(\nu',\mu) &\geq V^{\sigma}(\nu, \mu_{\sigma}^*) - V^{\sigma}(\nu_{\sigma}^*, \mu)\\ &\geq \frac{\sigma^2}{2}\left(\operatorname{D_{KL}}(\nu|\nu_{\sigma}^*) + \operatorname{D_{KL}}(\mu|\mu_{\sigma}^*)\right).
\end{align*}
\emph{Step 2: Proof of the right-hand side inequality.} From \eqref{eq:psi-linear-F} and \eqref{eq:phi-linear-F}, we have, for any $(\nu', \mu') \in \mathcal{P}(\mathcal{X}) \times \mathcal{P}(\mathcal{Y}),$ that
\begin{multline}
\label{eq:psi-F}
\begin{aligned}
    \int_{\mathcal{X}} \frac{\delta F}{\delta \nu}(\nu, \mu, x)(\Psi_{\sigma}(\nu, \mu)-\nu)(\mathrm{d}x) &+ \frac{\sigma^2}{2}\operatorname{D_{KL}}(\Psi_{\sigma}(\nu, \mu)|\pi)\\ 
    &\leq \int_{\mathcal{X}} \frac{\delta F}{\delta \nu}(\nu, \mu, x)(\nu'-\nu)(\mathrm{d}x) + \frac{\sigma^2}{2}\operatorname{D_{KL}}(\nu'|\pi), 
\end{aligned}
\end{multline}
\begin{multline}
\label{eq:phi-F}
\begin{aligned}
    \int_{\mathcal{Y}} \frac{\delta F}{\delta \mu}(\nu, \mu, y)(\Phi_{\sigma}(\nu, \mu)-\mu)(\mathrm{d}y) &- \frac{\sigma^2}{2}\operatorname{D_{KL}}(\Phi_{\sigma}(\nu, \mu)|\rho)\\ &\geq \int_{\mathcal{Y}} \frac{\delta F}{\delta \mu}(\nu, \mu, y)(\mu'-\mu)(\mathrm{d}y) - \frac{\sigma^2}{2}\operatorname{D_{KL}}(\mu'|\rho).
\end{aligned}
\end{multline}
Using Proposition \ref{prop: 2.5}, we observe that $\left(\Psi_{\sigma}, \Phi_{\sigma}\right)$ as defined in \eqref{eq:argminpsi-density} and \eqref{eq:argmaxphi-density} also satisfy
\begin{equation}
\label{eq: foc-psi}
    \frac{\delta F}{\delta \nu}(\nu, \mu, x) + \frac{\sigma^2}{2}\log \frac{\Psi_{\sigma}(\nu, \mu)(x)}{\pi(x)} = C,
\end{equation}
\begin{equation}
\label{eq: foc-phi}
    \frac{\delta F}{\delta \mu}(\nu, \mu, y) - \frac{\sigma^2}{2}\log \frac{\Phi_{\sigma}(\nu, \mu)(y)}{\rho(y)} = \Tilde{C},
\end{equation}
for all $(x,y) \in \mathcal{X} \times \mathcal{Y}$ Lebesgue almost surely, where $C, \Tilde{C} \in \mathbb R$. 

Given $(\nu, \mu) \in \mathcal{P}(\mathcal{X}) \times \mathcal{P}(\mathcal{Y}),$ we denote $\nu_{\sigma}^*(\mu) = \argmin_{\nu'} V^{\sigma}(\nu', \mu)$ and $\mu_{\sigma}^*(\nu) = \argmax_{\mu'} V^{\sigma}(\nu, \mu').$ Therefore, we have that
\begin{align*}
    &\max_{\mu' \in \mathcal{P}(\mathcal{Y})}V^{\sigma}(\nu,\mu') - V^{\sigma}(\nu,\mu) = V^{\sigma}\left(\nu, \mu_{\sigma}^*(\nu)\right) - V^{\sigma}(\nu, \mu)\\
    &\leq \int_{\mathcal{Y}} \frac{\delta F}{\delta \mu}(\nu, \mu, y) \left(\mu_{\sigma}^*(\nu) - \mu\right)(\mathrm{d}y) - \frac{\sigma^2}{2}\operatorname{D_{KL}}(\mu_{\sigma}^*(\nu)|\rho) + \frac{\sigma^2}{2}\operatorname{D_{KL}}(\mu|\rho)\\
    &\leq \int_{\mathcal{Y}} \frac{\delta F}{\delta \mu}(\nu, \mu, y)(\Phi_{\sigma}(\nu, \mu)-\mu)(\mathrm{d}y) - \frac{\sigma^2}{2}\operatorname{D_{KL}}(\Phi_{\sigma}(\nu, \mu)|\rho) + \frac{\sigma^2}{2}\operatorname{D_{KL}}(\mu|\rho)\\
    &= \int_{\mathcal{Y}} \Tilde{C} \left(\Phi_{\sigma}(\nu, \mu) - \mu\right)(\mathrm{d}y) + \frac{\sigma^2}{2} \int_{\mathcal{Y}} \log \frac{\Phi_{\sigma}(\nu, \mu)(y)}{\rho(y)} \left(\Phi_{\sigma}(\nu, \mu) - \mu\right)(\mathrm{d}y)\\ 
    &- \frac{\sigma^2}{2}\operatorname{D_{KL}}(\Phi_{\sigma}(\nu, \mu)|\rho) + \frac{\sigma^2}{2}\operatorname{D_{KL}}(\mu|\rho)\\
    &= -\frac{\sigma^2}{2} \int_{\mathcal{Y}} \log \frac{\Phi_{\sigma}(\nu, \mu)(y)}{\rho(y)} \mu(\mathrm{d}y) + \frac{\sigma^2}{2} \int_{\mathcal{Y}} \log \frac{\mu(y)}{\rho(y)} \mu(\mathrm{d}y) = \frac{\sigma^2}{2}\operatorname{D_{KL}}(\mu|\Phi_{\sigma}(\nu, \mu)),
\end{align*}
where the first inequality follows from \eqref{eq:concaveF} in Assumption \ref{assumption: assump-F-conv-conc}, the second inequality follows from \eqref{eq:phi-F} with $\mu' = \mu_{\sigma}^*(\nu),$ and the second equality follows from \eqref{eq: foc-phi}. Similarly, using \eqref{eq:convexF} from Assumption \ref{assumption: assump-F-conv-conc}, \eqref{eq:psi-F} with $\nu' = \nu_{\sigma}^*(\mu)$ and \eqref{eq: foc-psi}, we have that
    \begin{equation*}
        \min_{\nu' \in \mathcal{P}(\mathcal{X})}V^{\sigma}(\nu',\mu) - V^{\sigma}(\nu,\mu) \geq -\frac{\sigma^2}{2}\operatorname{D_{KL}}\left(\nu|\Psi_{\sigma}(\nu, \mu)\right).
    \end{equation*}
Therefore, we can finish the proof by adding the two inequalities above and recalling that $\operatorname{NI}(\nu, \mu) = \max_{\mu' \in \mathcal{P}(\mathcal{Y})}V^{\sigma}(\nu,\mu') - \min_{\nu' \in \mathcal{P}(\mathcal{X})}V^{\sigma}(\nu',\mu).$

\emph{Proof of the equality case.} Assume that $F$ is bilinear, i.e., $F(\nu, \mu) = \int_{\mathcal{Y}}\int_{\mathcal{X}} f(x,y) \nu(\mathrm{d}x) \mu(\mathrm{d}y),$ for some function $f:\mathcal{X} \times \mathcal{Y} \to \mathbb{R}.$ Then \eqref{eq:psi-linear-F} and \eqref{eq:phi-linear-F} become
\begin{equation}
\label{eq:psi-minimizer-bilinear}
    \Psi_{\sigma}(\mu) = \argmin_{\nu' \in \mathcal{P}(\mathcal{X})} V^{\sigma}(\nu', \mu),
\end{equation}
\begin{equation}
\label{eq:phi-maximizer-bilinear}
    \Phi_{\sigma}(\nu) = \argmax_{\mu' \in \mathcal{P}(\mathcal{Y})} V^{\sigma}(\nu, \mu').
\end{equation}
Therefore, using \eqref{eq:psi-minimizer-bilinear} and \eqref{eq:argminpsi-density}, we obtain
\begin{align*}
    V^{\sigma}(\nu, \mu) - \min_{\nu' \in \mathcal{P}(\mathcal{X})}V^{\sigma}(\nu',\mu) &= V^{\sigma}(\nu, \mu) - V^{\sigma}(\Psi_{\sigma}(\mu), \mu)\\
    &= \int_{\mathcal{Y}}\int_{\mathcal{X}} f(x,y) \nu(\mathrm{d}x) \mu(\mathrm{d}y) + \frac{\sigma^2}{2}\operatorname{D_{KL}}(\nu|\pi)\\ &- \int_{\mathcal{Y}}\int_{\mathcal{X}} f(x,y) \Psi_{\sigma}(\mu)(\mathrm{d}x) \mu(\mathrm{d}y) - \frac{\sigma^2}{2}\operatorname{D_{KL}}(\Psi_{\sigma}(\mu)|\pi)\\
    &= \int_{\mathcal{Y}}\int_{\mathcal{X}} f(x,y) \nu(\mathrm{d}x) \mu(\mathrm{d}y) + \frac{\sigma^2}{2}\operatorname{D_{KL}}(\nu|\pi) + \frac{\sigma^2}{2}\log Z(\mu),
\end{align*}
where 
\begin{equation*}
    Z(\mu) = \int_{\mathcal{X}} \exp\left(-\frac{2}{\sigma^2}\int_{\mathcal{Y}}f(x,y)\mu(\mathrm{d}y)-U^{\pi}(x)\right)\mathrm{d}x.
\end{equation*}
On the other hand, using \eqref{eq:argminpsi-density} again, a straightforward calculation shows that
\begin{equation*}
    \frac{\sigma^2}{2}\operatorname{D_{KL}}(\nu|\Psi_{\sigma}(\mu)) = \int_{\mathcal{Y}}\int_{\mathcal{X}} f(x,y) \nu(\mathrm{d}x) \mu(\mathrm{d}y) + \frac{\sigma^2}{2}\operatorname{D_{KL}}(\nu|\pi) + \frac{\sigma^2}{2}\log Z(\mu).
\end{equation*}
Hence, 
\begin{equation*}
    V^{\sigma}(\nu, \mu) - \min_{\nu' \in \mathcal{P}(\mathcal{X})}V^{\sigma}(\nu',\mu) = \frac{\sigma^2}{2}\operatorname{D_{KL}}(\nu|\Psi_{\sigma}(\mu)).
\end{equation*}
Similarly, using \eqref{eq:phi-maximizer-bilinear} and \eqref{eq:argmaxphi-density}, we can show that
\begin{equation*}
    \max_{\mu' \in \mathcal{P}(\mathcal{Y})}V^{\sigma}(\nu,\mu') - V^{\sigma}(\nu,\mu) = \frac{\sigma^2}{2}\operatorname{D_{KL}}(\mu|\Phi_{\sigma}(\nu)),
\end{equation*}
and hence 
\begin{equation*}
    \operatorname{NI}(\nu, \mu) = \frac{\sigma^2}{2}\left(\operatorname{D_{KL}}(\nu|\Psi_{\sigma}(\mu)) + \operatorname{D_{KL}}(\mu|\Phi_{\sigma}(\nu))\right).
\end{equation*}
\end{proof}

\subsection{Existence and uniqueness of the MF-BR flow}
In this subsection, we present the proof of our main result concerning the existence and uniqueness of the Mean-Field Best Response (MF-BR) flow, i.e., Proposition \ref{prop:Existence-flows-FP}. The proof follows a classical Picard iteration technique. Lemma \ref{thm:ContractivityBD} shows that a Picard iteration that we use for proving existence of the MF-BR flow admits a unique fixed point in an appropriate complete metric space, which then helps us to conclude the proof of Proposition \ref{prop:Existence-flows-FP}.

Before presenting the proof of Proposition \ref{prop:Existence-flows-FP}, we state and prove a useful auxiliary result. The lemma below is an adaptation of the second part of \cite[Proposition $4.2$]{https://doi.org/10.48550/arxiv.2202.05841} to the min-max setting \eqref{eq:F-nonlinear}. In contrast to \cite{https://doi.org/10.48550/arxiv.2202.05841}, we work with the total variation instead of the Wasserstein distance, which helps us to simplify some aspects of the argument, and to avoid imposing an additional assumption of Lipschitz continuity of the flat derivative of $F$ (cf.\ \cite[(2.1)]{https://doi.org/10.48550/arxiv.2202.05841}).
\begin{lemma}
\label{lemma:PsiPhiLipschitz}
Suppose that Assumption \ref{assumption: boundedness-first-flat} and \ref{assump:F} hold. Then there exist constants $L_{\Psi_{\sigma}}$, $L_{\Phi_{\sigma}} > 0$ such that for all $(\nu, \mu), (\nu', \mu') \in \mathcal{P}(\mathcal{X}) \times \mathcal{P}(\mathcal{Y})$, it holds that
\begin{equation}
\label{eq: Lipschitz-psi}
    \Big|\Psi_{\sigma}(\nu, \mu)(x) - \Psi_{\sigma}(\nu', \mu')(x)\Big| \leq L_{\Psi_{\sigma}}e^{-U^{\pi}(x)}\left(\operatorname{TV}(\nu,\nu') + \operatorname{TV}(\mu,\mu')\right),
\end{equation}
\begin{equation}
\label{eq: Lipschitz-phi}
    \Big|\Phi_{\sigma}(\nu, \mu)(y) - \Phi_{\sigma}(\nu', \mu')(y)\Big| \leq L_{\Phi_{\sigma}}e^{-U^{\rho}(y)}\left(\operatorname{TV}(\nu,\nu') + \operatorname{TV}(\mu,\mu')\right),
\end{equation}
and hence the maps \(\Psi_{\sigma} : \mathcal P \left(\mathcal{X}\right) \times \mathcal P \left(\mathcal{Y}\right) \to \mathcal P \left(\mathcal{X}\right)\) and \(\Phi_{\sigma} : \mathcal P \left(\mathcal{X}\right) \times \mathcal P \left(\mathcal{Y}\right) \to \mathcal P \left(\mathcal{Y}\right)\) are $\operatorname{TV}$-Lipschitz in the sense that there exist $L,L' > 0$ such that
\begin{equation*}
    \operatorname{TV}\left(\Psi_{\sigma}(\nu, \mu), \Psi_{\sigma}(\nu', \mu')\right) \leq L\left(\operatorname{TV}\left(\nu, \nu'\right) + \operatorname{TV}\left(\mu, \mu'\right)\right),
\end{equation*}
\begin{equation*}
    \operatorname{TV}\left(\Phi_{\sigma}(\nu, \mu), \Phi_{\sigma}(\nu', \mu')\right) \leq L'\left(\operatorname{TV}\left(\nu, \nu'\right) + \operatorname{TV}\left(\mu, \mu'\right)\right).
\end{equation*}
\end{lemma}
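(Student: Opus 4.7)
The plan is to write each density as an exponential divided by its normalizing constant and estimate each piece separately using the boundedness and Lipschitz-in-TV properties of the first-order flat derivatives. Set $g(\nu,\mu,x) := \exp\!\left(-\tfrac{2}{\sigma^2}\tfrac{\delta F}{\delta\nu}(\nu,\mu,x)\right)$, so that $\Psi_\sigma(\nu,\mu)(x) = g(\nu,\mu,x)e^{-U^\pi(x)}/Z(\nu,\mu)$ with $Z(\nu,\mu) = \int_{\mathcal{X}} g(\nu,\mu,x)e^{-U^\pi(x)}\mathrm{d}x$. I would then split
\begin{equation*}
\Psi_\sigma(\nu,\mu)(x) - \Psi_\sigma(\nu',\mu')(x) = \frac{e^{-U^\pi(x)}}{Z(\nu,\mu)}\bigl(g(\nu,\mu,x)-g(\nu',\mu',x)\bigr) + g(\nu',\mu',x)\,e^{-U^\pi(x)}\!\left(\frac{1}{Z(\nu,\mu)} - \frac{1}{Z(\nu',\mu')}\right).
\end{equation*}

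The first step is to control $|g(\nu,\mu,x)-g(\nu',\mu',x)|$. Since $|\tfrac{\delta F}{\delta\nu}|\leq C_\nu$ by Assumption \ref{assumption: boundedness-first-flat}, the values $-\tfrac{2}{\sigma^2}\tfrac{\delta F}{\delta\nu}$ lie in $[-2C_\nu/\sigma^2,2C_\nu/\sigma^2]$, so by the mean value theorem applied to $t\mapsto e^t$,
\begin{equation*}
|g(\nu,\mu,x)-g(\nu',\mu',x)| \leq e^{2C_\nu/\sigma^2}\cdot \tfrac{2}{\sigma^2}\left|\tfrac{\delta F}{\delta\nu}(\nu,\mu,x)-\tfrac{\delta F}{\delta\nu}(\nu',\mu',x)\right| \leq \tfrac{2C_\nu'}{\sigma^2}e^{2C_\nu/\sigma^2}\bigl(\operatorname{TV}(\nu,\nu')+\operatorname{TV}(\mu,\mu')\bigr),
\end{equation*}
where the last inequality uses estimate \eqref{eq: 2.5i}. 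Integrating this bound against $e^{-U^\pi(x)}\mathrm{d}x$, and using that $\int_{\mathcal{X}} e^{-U^\pi}\mathrm{d}x = 1$, gives the same bound (up to the same constant) for $|Z(\nu,\mu)-Z(\nu',\mu')|$. Together with the two-sided estimate $e^{-2C_\nu/\sigma^2}\leq Z(\cdot,\cdot)\leq e^{2C_\nu/\sigma^2}$ from \eqref{eq:bound-psi-normalization-constant} (used to bound $g$ pointwise by $e^{2C_\nu/\sigma^2}$ and $1/Z$, $1/(ZZ')$ from above), the two terms in the splitting are each dominated by $C\cdot e^{-U^\pi(x)}(\operatorname{TV}(\nu,\nu')+\operatorname{TV}(\mu,\mu'))$ for an explicit constant $C = C(C_\nu,C_\nu',\sigma)$. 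Summing yields \eqref{eq: Lipschitz-psi} with $L_{\Psi_\sigma}$ explicit; the estimate \eqref{eq: Lipschitz-phi} is proved in exactly the same way using \eqref{eq: 2.5ii} and the bounds on $\tfrac{\delta F}{\delta\mu}$.

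Finally, the TV-Lipschitz statement follows immediately by integrating \eqref{eq: Lipschitz-psi} against $\mathrm{d}x$: since $\int_{\mathcal{X}}e^{-U^\pi(x)}\mathrm{d}x=1$, we obtain
\begin{equation*}
\operatorname{TV}(\Psi_\sigma(\nu,\mu),\Psi_\sigma(\nu',\mu')) \leq L_{\Psi_\sigma}\bigl(\operatorname{TV}(\nu,\nu')+\operatorname{TV}(\mu,\mu')\bigr),
\end{equation*}
and analogously for $\Phi_\sigma$. There is no real obstacle here: the argument is a linearisation of the Gibbs map. The one point that requires a little care is keeping the exponential prefactors $e^{-U^\pi(x)}$, $e^{-U^\rho(y)}$ visible on the right-hand side of \eqref{eq: Lipschitz-psi}--\eqref{eq: Lipschitz-phi}, rather than absorbing them into a global $L^\infty$-type constant, because these pointwise bounds (not just the TV-Lipschitz consequence) are what the subsequent fixed-point/Picard argument for Proposition \ref{prop:Existence-flows-FP} will rely on. Working with TV in place of the Wasserstein distance used in \cite{https://doi.org/10.48550/arxiv.2202.05841} is what allows the bound on $\tfrac{\delta F}{\delta\nu}(\nu,\mu,\cdot)-\tfrac{\delta F}{\delta\nu}(\nu',\mu',\cdot)$ to come directly from Assumption \ref{assump:F} via \eqref{eq: 2.5i}, bypassing any Lipschitz-in-$x$ hypothesis on the flat derivative.
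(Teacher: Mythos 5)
Your proposal is correct and follows essentially the same route as the paper's proof: the same add-and-subtract decomposition of $\Psi_\sigma(\nu,\mu)-\Psi_\sigma(\nu',\mu')$ into a numerator-difference term and a normalizing-constant term, the same exponential mean-value estimate combined with \eqref{eq: 2.5i}--\eqref{eq: 2.5ii}, the same two-sided bounds on $Z$, and the same integration step for the TV-Lipschitz conclusion. The only cosmetic difference is that the paper uses $\operatorname{TV}(m,m')=\tfrac{1}{2}\int|m-m'|$ in the last step, so it obtains $L=L_{\Psi_\sigma}/2$ rather than your (still valid, slightly weaker) $L=L_{\Psi_\sigma}$.
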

\begin{proof}
From Assumption \ref{assump:F}, using \eqref{eq: 2.5i}, \eqref{eq: 2.5ii} and the estimate \(\left|e^x - e^y\right| \leq e^{\max\{x,y\}} \left|x-y\right|\), we have
\begin{multline}
\label{eq:bound-diff-psi}
\left | \exp \left(- \frac{2}{\sigma^2} \frac{\delta F}{\delta \nu}(\nu, \mu, x) - U^{\pi} \left(x\right) \right) - \exp \left(- \frac{2}{\sigma^2} \frac{\delta F}{\delta \nu}(\nu', \mu', x) - U^{\pi} \left(x\right) \right) \right| \\
\leq \frac{2}{\sigma^2} \exp \left(\frac{2}{\sigma^2}C_{\nu}\right)e^{-U^{\pi}(x)}\left(\operatorname{TV}(\nu,\nu') + \operatorname{TV}(\mu,\mu')\right).
\end{multline}
Integrating the previous inequality with respect to \(x\), we obtain
\begin{equation}
\label{eq:bound-diff-psi-Z}
    \left|Z(\nu, \mu) - Z(\nu', \mu')\right| \leq \frac{2}{\sigma^2} \exp \left(\frac{2}{\sigma^2}C_{\nu}\right)\left(\operatorname{TV}(\nu,\nu') + \operatorname{TV}(\mu,\mu')\right).
\end{equation}
Therefore, we have that
\begin{align*}
    \Big|\Psi_{\sigma}(\nu, \mu)(x) - \Psi_{\sigma}(\nu', \mu')(x)\Big| &=  \Bigg|\frac{1}{Z(\nu, \mu)}\exp \left(- \frac{2}{\sigma^2} \frac{\delta F}{\delta \nu}(\nu, \mu, x) - U^{\pi} \left(x\right) \right)\\ 
    &- \frac{1}{Z(\nu', \mu')}\exp \left(- \frac{2}{\sigma^2} \frac{\delta F}{\delta \nu}(\nu, \mu, x) - U^{\pi} \left(x\right) \right)\\ &+ \frac{1}{Z(\nu', \mu')}\exp \left(- \frac{2}{\sigma^2} \frac{\delta F}{\delta \nu}(\nu, \mu, x) - U^{\pi} \left(x\right) \right)\\ &- \frac{1}{Z(\nu', \mu')}\exp \left(- \frac{2}{\sigma^2} \frac{\delta F}{\delta \nu}(\nu', \mu', x) - U^{\pi} \left(x\right) \right)\Bigg|\\
    &\leq \exp \left(- \frac{2}{\sigma^2} \frac{\delta F}{\delta \nu}(\nu, \mu, x) - U^{\pi} \left(x\right) \right) \frac{\Big|Z(\nu', \mu') - Z(\nu, \mu)\Big|}{Z(\nu, \mu)Z(\nu', \mu')}\\ 
    &+ \frac{1}{Z(\nu', \mu')}\Bigg|\exp \left(- \frac{2}{\sigma^2} \frac{\delta F}{\delta \nu}(\nu, \mu, x) - U^{\pi} \left(x\right) \right)\\ &- \exp \left(- \frac{2}{\sigma^2} \frac{\delta F}{\delta \nu}(\nu', \mu', x) - U^{\pi} \left(x\right) \right)\Bigg|.
\end{align*}
Using estimates \eqref{eq:bound-psi-unnormalized-density}, \eqref{eq:bound-psi-normalization-constant}, \eqref{eq:bound-diff-psi} and \eqref{eq:bound-diff-psi-Z}, we arrive at the Lipschitz property \eqref{eq: Lipschitz-psi} with $L_{\Psi_{\sigma}} \coloneqq \frac{2}{\sigma^2}\exp\left(\frac{4C_{\nu}}{\sigma^2}\right)\left(1+\exp\left(\frac{4C_{\nu}}{\sigma^2}\right)\right) >0$. Proving \eqref{eq: Lipschitz-phi} follows the same steps as above but with $L_{\Phi_{\sigma}} \coloneqq \frac{2}{\sigma^2}\exp\left(\frac{4C_{\mu}}{\sigma^2}\right)\left(1+\exp\left(\frac{4C_{\mu}}{\sigma^2}\right)\right)$.

Now, integrating \eqref{eq: Lipschitz-psi} on $\mathcal{X},$ and applying \cite[Lemma $2.1$]{tsybakov2008introduction}, that is $\operatorname{TV}(m,m') = \frac{1}{2}\int_{\mathcal{X}} |m(x)-m'(x)|\mathrm{d}x,$ for any $m,m' \in \mathcal{P}_{\text{ac}}(\mathcal{X}),$ it follows that
\begin{equation*}
    \operatorname{TV}\left(\Psi_{\sigma}(\nu, \mu), \Psi_{\sigma}(\nu', \mu')\right) \leq \frac{L_{\Psi_{\sigma}}}{2}\left(\operatorname{TV}\left(\nu, \nu'\right) + \operatorname{TV}\left(\mu, \mu'\right)\right),
\end{equation*}
and we set $L \coloneqq \frac{L_{\Psi_{\sigma}}}{2} > 0$. One similarly obtains that $\Phi_{\sigma}$ is TV-Lipschitz with constant $L' \coloneqq \frac{L_{\Phi_{\sigma}}}{2}$. 
\end{proof}

\begin{proof}[Proof of Proposition \ref{prop:Existence-flows-FP}]
\emph{Step 1: Existence of gradient flow on $[0,T].$}
    By Duhamel's formula we can rewrite equations in \eqref{eq: mean-field-br} as
\begin{equation*}
\nu_t(x) = e^{-\alpha t}\nu_0(x) + \int_0^t \alpha e^{-\alpha (t-s)} \Psi_{\sigma}(\nu_s, \mu_s)(x) \mathrm{d}s,
\end{equation*}
\begin{equation*}
\mu_t(y) = e^{-\alpha t}\mu_0(y) + \int_0^t \alpha e^{-\alpha (t-s)} \Phi_{\sigma}(\nu_s, \mu_s)(y) \mathrm{d}s.
\end{equation*}
Based on these expressions, we will define a Picard iteration scheme as follows. Fix $T > 0$ and for each $n \geq 1$, fix $\nu_0^{(n)} = \nu_0^{(0)} = \nu_0$ and $\mu_0^{(n)} = \mu_0^{(0)} = \mu_0.$ Then define $(\nu_t^{(n)})_{t \in [0,T]}$ and $(\mu_t^{(n)})_{t \in [0,T]}$ by
\begin{equation}
\label{eq:Picard-nu}
\nu^{(n)}_t(x) = e^{-\alpha t}\nu_0(x) + \int_0^t \alpha e^{-\alpha (t-s)} \Psi_{\sigma}(\nu^{(n-1)}_s, \mu^{(n-1)}_s)(x) \mathrm{d}s,
\end{equation}
\begin{equation}
\label{eq:Picard-mu}
\mu^{(n)}_t(y) = e^{-\alpha t}\mu_0(y) + \int_0^t \alpha e^{-\alpha (t-s)} \Phi_{\sigma}(\nu^{(n-1)}_s, \mu^{(n-1)}_s)(y) \mathrm{d}s.
\end{equation}
For fixed $T > 0$, we consider the sequence of flows $\left( (\nu_t^{(n)}, \mu_t^{(n)})_{t \in [0,T]} \right)_{n=0}^{\infty}$ in \\$\left(\mathcal{P}_{\text{ac}}(\mathcal{X})^{[0,T]} \times \mathcal{P}_{\text{ac}}(\mathcal{Y})^{[0,T]}, \mathcal{TV}^{[0,T]}\right),$ where, for any $(\nu_t, \mu_t)_{t \in [0,T]} \in \mathcal{P}_{\text{ac}}(\mathcal{X})^{[0,T]} \times \mathcal{P}_{\text{ac}}(\mathcal{Y})^{[0,T]}$, the distance $\mathcal{TV}^{[0,T]}$ is defined by
\begin{equation*}
\mathcal{TV}^{[0,T]} \left( (\nu_t, \mu_t)_{t \in [0,T]}, (\nu_t',\mu_t')_{t \in [0,T]} \right) := \int_0^T \operatorname{TV}(\nu_t, \nu_t') \mathrm{d}t + \int_0^T \operatorname{TV}(\mu_t, \mu_t') \mathrm{d}t.
\end{equation*}
Since $\left(\mathcal{P}(\mathcal{X}), \operatorname{TV}\right)$ is complete, we can apply the argument from \cite[Lemma A.5]{SiskaSzpruch2020} with $p=1$ to conclude that $\left(\mathcal{P}(\mathcal{X})^{[0,T]}, \int_0^T \operatorname{TV}(\nu_t, \nu_t') \mathrm{d}t\right)$ and $\left(\mathcal{P}(\mathcal{Y})^{[0,T]}, \int_0^T \operatorname{TV}(\mu_t, \mu_t') \mathrm{d}t\right)$ are complete. Therefore, one can deduce that $\left(\mathcal{P}(\mathcal{X})^{[0,T]} \times \mathcal{P}(\mathcal{Y})^{[0,T]}, \mathcal{TV}^{[0,T]}\right)$ is also complete. 

On the other hand, it is straightforward to check that $\left(\mathcal{P}_{\text{ac}}(\mathcal{X}), \operatorname{TV}\right)$ is closed. Indeed, take a sequence $(\mu_n)_{n \geq 1} \subset \mathcal{P}_{\text{ac}}(\mathcal{X})$ such that $\mu_n \to \mu$ in $\operatorname{TV}$ for some $\mu \in \mathcal{P}(\mathcal{X})$. 
By Definition \ref{def:KRwasserstein}, since $\mu_n \to \mu$ in $\operatorname{TV}$, it follows that $\mu_n(A) \to \mu(A)$ for all sets $A \in \mathcal{B}(\mathcal{X})$, where $\mathcal{B}(\mathcal{X})$ is the Borel $\sigma$-algebra on $\mathcal{X}$. Since $(\mu_n)_{n \geq 1} \subset \mathcal{P}_{\text{ac}}(\mathcal{X})$, choosing $A$ with Lebesgue measure $0$ implies that $\mu_n(A) = 0$ for all $n \geq 1$. Hence, $\mu(A) = 0$, i.e. $\mu \in \mathcal{P}_{\text{ac}}(\mathcal{X})$. Therefore, $\left(\mathcal{P}_{\text{ac}}(\mathcal{X}), \operatorname{TV}\right)$ is closed. Then clearly both $\left(\mathcal{P}_{\text{ac}}(\mathcal{X})^{[0,T]}, \int_0^T \operatorname{TV}(\nu_t, \nu_t') \mathrm{d}t\right)$ and $\left(\mathcal{P}_{\text{ac}}(\mathcal{Y})^{[0,T]}, \int_0^T \operatorname{TV}(\mu_t, \mu_t') \mathrm{d}t\right)$ are closed and therefore $\left(\mathcal{P}_{\text{ac}}(\mathcal{X})^{[0,T]} \times \mathcal{P}_{\text{ac}}(\mathcal{Y})^{[0,T]}, \mathcal{TV}^{[0,T]}\right)$ is closed. But then since $\mathcal{P}_{\text{ac}}(\mathcal{X})^{[0,T]} \times \mathcal{P}_{\text{ac}}(\mathcal{Y})^{[0,T]} \subset \mathcal{P}(\mathcal{X})^{[0,T]} \times \mathcal{P}(\mathcal{Y})^{[0,T]}$ and the latter is complete in $\operatorname{TV}$-norm, we obtain that \newline $\left(\mathcal{P}_{\text{ac}}(\mathcal{X})^{[0,T]} \times \mathcal{P}_{\text{ac}}(\mathcal{Y})^{[0,T]}, \mathcal{TV}^{[0,T]}\right)$ is complete.

We consider the Picard iteration mapping $\varphi \left((\nu_t^{(n-1)}, \mu_t^{(n-1)})_{t \in [0,T]}\right) := (\nu_t^{(n)}, \mu_t^{(n)})_{t \in [0,T]}$ defined via \eqref{eq:Picard-nu} and \eqref{eq:Picard-mu} and show that $\varphi$ admits a unique fixed point $(\nu_t, \mu_t)_{t \in [0,T]}$ in the complete space $\left(\mathcal{P}_{\text{ac}}(\mathcal{X})^{[0,T]} \times \mathcal{P}_{\text{ac}}(\mathcal{Y})^{[0,T]}, \mathcal{TV}^{[0,T]}\right)$. Then this fixed point is the solution to \eqref{eq: mean-field-br}.

\begin{lemma}
\label{thm:ContractivityBD}
	The mapping $\varphi\left((\nu_t^{(n-1)}, \mu_t^{(n-1)})_{t \in [0,T]}\right) := (\nu_t^{(n)}, \mu_t^{(n)})_{t \in [0,T]}$ defined via \eqref{eq:Picard-nu} and \eqref{eq:Picard-mu} admits a unique fixed point in $\left(\mathcal{P}_{\text{ac}}(\mathcal{X})^{[0,T]} \times \mathcal{P}_{\text{ac}}(\mathcal{Y})^{[0,T]}, \mathcal{TV}^{[0,T]}\right)$.
\end{lemma}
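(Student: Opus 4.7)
The plan is a standard Banach fixed point argument applied to a suitable power $\varphi^{n_0}$ of the Picard iteration map, using Lemma \ref{proposition:PsiPhiBounds} to keep the iterates inside the ambient space and Lemma \ref{lemma:PsiPhiLipschitz} to drive the contraction. For invariance, given $(\nu_s^{(n-1)}, \mu_s^{(n-1)}) \in \mathcal{P}_{\text{ac}}(\mathcal{X}) \times \mathcal{P}_{\text{ac}}(\mathcal{Y})$, Lemma \ref{proposition:PsiPhiBounds} guarantees that $\Psi_{\sigma}(\nu_s^{(n-1)}, \mu_s^{(n-1)}) \in \mathcal{P}_{\text{ac}}(\mathcal{X})$ and $\Phi_{\sigma}(\nu_s^{(n-1)}, \mu_s^{(n-1)}) \in \mathcal{P}_{\text{ac}}(\mathcal{Y})$ for every $s \in [0, T]$. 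Because all weights are non-negative and $e^{-\alpha t} + \int_0^t \alpha e^{-\alpha(t-s)}\,\mathrm{d}s = 1$, swapping the order of integration in \eqref{eq:Picard-nu} shows that $\nu_t^{(n)}$ is a non-negative integrable function of unit mass, hence in $\mathcal{P}_{\text{ac}}(\mathcal{X})$; the same reasoning applies to $\mu_t^{(n)}$, so $\varphi$ stabilises $\mathcal{P}_{\text{ac}}(\mathcal{X})^{[0,T]} \times \mathcal{P}_{\text{ac}}(\mathcal{Y})^{[0,T]}$.

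For the contraction estimate, let $(A, B), (\tilde{A}, \tilde{B})$ lie in this space and let $(\nu^{(n)}, \mu^{(n)}), (\tilde{\nu}^{(n)}, \tilde{\mu}^{(n)})$ denote the associated Picard sequences. Set
\begin{equation*}
G_n(t) := \operatorname{TV}(\nu_t^{(n)}, \tilde{\nu}_t^{(n)}) + \operatorname{TV}(\mu_t^{(n)}, \tilde{\mu}_t^{(n)}), \quad G_0(t) := \operatorname{TV}(A_t, \tilde{A}_t) + \operatorname{TV}(B_t, \tilde{B}_t).
\end{equation*}
The $e^{-\alpha t}\nu_0$ and $e^{-\alpha t}\mu_0$ terms in \eqref{eq:Picard-nu}--\eqref{eq:Picard-mu} cancel in the differences, so using $\operatorname{TV}(m, m') = \tfrac{1}{2}\int |m - m'|\,\mathrm{d}x$, Fubini's theorem, the Lipschitz estimates of Lemma \ref{lemma:PsiPhiLipschitz}, and $\alpha e^{-\alpha(t-s)} \leq \alpha$, I would derive the Volterra-type one-step inequality $G_n(t) \leq C \int_0^t G_{n-1}(s)\,\mathrm{d}s$ with $C := \alpha(L + L')$. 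A straightforward induction employing Fubini at each step then yields the convolution bound $G_n(t) \leq \frac{C^n}{(n-1)!}\int_0^t (t - u)^{n-1} G_0(u)\,\mathrm{d}u$, and integrating over $[0, T]$ together with $(T - u)^n \leq T^n$ gives
\begin{equation*}
\mathcal{TV}^{[0,T]}\big(\varphi^n(A, B), \varphi^n(\tilde{A}, \tilde{B})\big) \leq \frac{(CT)^n}{n!}\, \mathcal{TV}^{[0,T]}\big((A, B), (\tilde{A}, \tilde{B})\big).
\end{equation*}

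Since $(CT)^n/n! \to 0$ as $n \to \infty$, I can pick $n_0$ large enough that $\varphi^{n_0}$ becomes a strict contraction on the complete metric space $(\mathcal{P}_{\text{ac}}(\mathcal{X})^{[0,T]} \times \mathcal{P}_{\text{ac}}(\mathcal{Y})^{[0,T]}, \mathcal{TV}^{[0,T]})$, so the Banach fixed point theorem delivers a unique $(\nu, \mu)$ with $\varphi^{n_0}(\nu, \mu) = (\nu, \mu)$. Since $\varphi$ commutes with $\varphi^{n_0}$, applying $\varphi$ to this identity shows that $\varphi(\nu, \mu)$ is also a fixed point of $\varphi^{n_0}$, hence equal to $(\nu, \mu)$ by uniqueness, thereby producing a unique fixed point of $\varphi$ itself. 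I expect the main obstacle to be establishing the contraction for arbitrary $T > 0$: the naive single-step estimate only yields $\mathcal{TV}^{[0,T]}(\varphi(A, B), \varphi(\tilde{A}, \tilde{B})) \leq CT\, \mathcal{TV}^{[0,T]}((A, B), (\tilde{A}, \tilde{B}))$, which contracts only when $CT < 1$, so it is the factorial gain produced by iterating the Volterra inequality that removes this size restriction and closes the argument for every finite time horizon.
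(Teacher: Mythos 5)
Your argument is correct and rests on exactly the same ingredients as the paper's: the pointwise Lipschitz bounds of Lemma \ref{lemma:PsiPhiLipschitz}, the resulting Volterra inequality $G_n(t)\le \alpha(L+L')\int_0^t G_{n-1}(s)\,\mathrm{d}s$ (your $C$ coincides with the paper's $\alpha_L$), and the factorial gain from iterating it. Where you genuinely diverge is in how the fixed point is extracted. The paper shows that the one specific Picard sequence started from $(\nu_0,\mu_0)$ is Cauchy, takes its limit, and then needs two further steps: a separate verification (via the Lipschitz continuity of $\Psi_\sigma,\Phi_\sigma$ and dominated convergence, justified by Lemma \ref{proposition:PsiPhiBounds}) that the limit actually solves \eqref{eq:Picard-nu}--\eqref{eq:Picard-mu}, and a separate Gronwall argument for uniqueness. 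You instead prove the uniform estimate $\mathcal{TV}^{[0,T]}(\varphi^n(A,B),\varphi^n(\tilde A,\tilde B))\le \frac{(CT)^n}{n!}\mathcal{TV}^{[0,T]}((A,B),(\tilde A,\tilde B))$ for \emph{arbitrary} inputs and invoke the power-contraction form of Banach's theorem, which delivers existence and uniqueness simultaneously and makes the paper's Steps 2 and 3 unnecessary; the price is that you must check invariance of $\mathcal{P}_{\text{ac}}(\mathcal{X})^{[0,T]}\times\mathcal{P}_{\text{ac}}(\mathcal{Y})^{[0,T]}$ under $\varphi$ explicitly, which your mixture argument does correctly. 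One small caveat applies equally to both proofs: $\mathcal{TV}^{[0,T]}$ only separates flows up to Lebesgue-null sets of times, so uniqueness is for almost every $t$ (the paper concedes this in its Step 3); if you want the genuine pointwise-in-$t$ solution claimed in Proposition \ref{prop:Existence-flows-FP}, you still need the continuity upgrade carried out there.
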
 
	\begin{proof}[Proof of Lemma \ref{thm:ContractivityBD}]
 \emph{Step 1: The sequence of flows $\left( (\nu_t^{(n)}, \mu_t^{(n)})_{t \in [0,T]} \right)_{n=0}^{\infty}$ is a Cauchy sequence in $\left(\mathcal{P}_{\text{ac}}(\mathcal{X})^{[0,T]} \times \mathcal{P}_{\text{ac}}(\mathcal{Y})^{[0,T]}, \mathcal{TV}^{[0,T]}\right).$}
 
From \cite[Lemma $2.1$]{tsybakov2008introduction}, that is $\operatorname{TV}(m,m') = \frac{1}{2}\int_{\mathcal{X}} |m(x)-m'(x)|\mathrm{d}x,$ for any $m,m' \in \mathcal{P}_{\text{ac}}(\mathcal{X}),$ and \eqref{eq:Picard-nu}, we have that
\begin{multline}
\label{eq: TV-nu}
\begin{aligned}
    &\operatorname{TV}\left(\nu^{(n)}_t, \nu^{(n-1)}_t\right) = \frac{1}{2}\int_{\mathcal{X}}\Big|\nu^{(n)}_t(x)-\nu^{(n-1)}_t(x)\Big|\mathrm{d}x\\ 
    &= \frac{1}{2}\int_{\mathcal{X}}\Big|\int_0^t \alpha e^{-\alpha (t-s)} \Psi_{\sigma}(\nu^{(n-1)}_s, \mu^{(n-1)}_s)(x) \mathrm{d}s - \int_0^t \alpha e^{-\alpha (t-s)}\Psi_{\sigma}(\nu^{(n-2)}_s, \mu^{(n-2)}_s)(x) \mathrm{d}s\Big|\mathrm{d}x\\ 
    &\leq \frac{\alpha}{2}\int_{\mathcal{X}}\int_0^t \Big|\Psi_{\sigma}(\nu^{(n-1)}_s, \mu^{(n-1)}_s)(x) - \Psi_{\sigma}(\nu^{(n-2)}_s, \mu^{(n-2)}_s)(x)\Big| \mathrm{d}s\mathrm{d}x\\ 
    &\leq \frac{\alpha}{2}\int_{\mathcal{X}}\int_0^t L_{\Psi_{\sigma}}e^{-U^{\pi}(x)}\left(\operatorname{TV}(\nu^{(n-1)}_s,\nu^{(n-2)}_s) + \operatorname{TV}(\mu^{(n-1)}_s,\mu^{(n-2)}_s)\right)\mathrm{d}s\mathrm{d}x\\
    &= \frac{\alpha L_{\Psi_{\sigma}}}{2}\int_0^t \left(\operatorname{TV}(\nu^{(n-1)}_s,\nu^{(n-2)}_s) + \operatorname{TV}(\mu^{(n-1)}_s,\mu^{(n-2)}_s)\right)\mathrm{d}s,
\end{aligned}
\end{multline}
where in the first inequality we used the fact that $e^{-\alpha (t-s)} \leq 1$, for all $s \in [0,t]$, and in the second inequality we used \eqref{eq: Lipschitz-psi}.

A similar argument using \eqref{eq: Lipschitz-phi} leads to
\begin{equation}
\label{eq: TV-mu}
    \operatorname{TV}\left(\mu^{(n)}_t, \mu^{(n-1)}_t\right) \leq \frac{\alpha L_{\Phi_{\sigma}}}{2}\int_0^t \left(\operatorname{TV}(\nu^{(n-1)}_s,\nu^{(n-2)}_s) + \operatorname{TV}(\mu^{(n-1)}_s,\mu^{(n-2)}_s)\right)\mathrm{d}s.
\end{equation}
Adding inequalities \eqref{eq: TV-nu} and \eqref{eq: TV-mu} and setting $\alpha_L \coloneqq \frac{\alpha L_{\Psi_{\sigma}}+\alpha L_{\Phi_{\sigma}}}{2} > 0$ gives
\begin{align*}
    &\operatorname{TV}\left(\nu^{(n)}_t, \nu^{(n-1)}_t\right) + \operatorname{TV}\left(\mu^{(n)}_t, \mu^{(n-1)}_t\right) \leq \alpha_L \int_0^t \left(\operatorname{TV}\left(\nu^{(n-1)}_s, \nu^{(n-2)}_s\right) + \operatorname{TV}\left(\mu^{(n-1)}_s, \mu^{(n-2)}_s\right)\right) \mathrm{d}s\\
   &\leq \left(\alpha_L\right)^{n-1}  \int_0^t \int_0^{t_1} \ldots \int_0^{t_{n-2}} \left(\operatorname{TV}\left(\nu^{(1)}_{t_{n-1}}, \nu^{(0)}_{t_{n-1}}\right) + \operatorname{TV}\left(\mu^{(1)}_{t_{n-1}}, \mu^{(0)}_{t_{n-1}}\right)\right) \mathrm{d}t_{n-1} \ldots \mathrm{d}t_2 \mathrm{d}t_1\\ 
    &\leq \left(\alpha_L\right)^{n-1}\frac{t^{n-2}}{(n-2)!}\int_0^t \left(\operatorname{TV}\left(\nu^{(1)}_{t_{n-1}}, \nu^{(0)}_{t_{n-1}}\right) + \operatorname{TV}\left(\mu^{(1)}_{t_{n-1}}, \mu^{(0)}_{t_{n-1}}\right)\right) \mathrm{d}t_{n-1},
  \end{align*}
	where in the third inequality we used the bound $\int_0^{t_{n-2}} \mathrm{d}t_{n-1} \leq \int_0^{t}  \mathrm{d}t_{n-1}$.
    Hence, we obtain
		\begin{align*}
		&\int_0^T \operatorname{TV}\left(\nu^{(n)}_t, \nu^{(n-1)}_t\right) + \operatorname{TV}\left(\mu^{(n)}_t, \mu^{(n-1)}_t\right)\mathrm{d}t \\ 
  &\leq \left(\alpha_L\right)^{n-1}\frac{T^{n-1}}{(n-2)!}\int_0^T \left(\operatorname{TV}\left(\nu^{(1)}_{t_{n-1}}, \nu^{(0)}_{t_{n-1}}\right) + \operatorname{TV}\left(\mu^{(1)}_{t_{n-1}}, \mu^{(0)}_{t_{n-1}}\right)\right)\mathrm{d}t_{n-1}.
	\end{align*}
 Using the definition of $\mathcal{TV}^{[0,T]},$ the last inequality becomes
 \begin{align*}
    &\mathcal{TV}^{[0,T]} \left( (\nu_t^{(n)}, \mu_t^{(n)})_{t \in [0,T]}, (\nu_t^{(n-1)}, \mu_t^{(n-1)})_{t \in [0,T]} \right)\\ &\leq \left(\alpha_L\right)^{n-1} \frac{T^{n-1}}{(n-2)!} \mathcal{TV}^{[0,T]} \left( (\nu_t^{(1)}, \mu_t^{(1)})_{t \in [0,T]}, (\nu_t^{(0)}, \mu_t^{(0)})_{t \in [0,T]} \right).
    \end{align*}
	By choosing $n$ sufficiently large, we conclude that $\left( (\nu_t^{(n)}, \mu_t^{(n)})_{t \in [0,T]} \right)_{n=0}^{\infty}$ is a Cauchy sequence. By completeness of $\left(\mathcal{P}_{\text{ac}}(\mathcal{X})^{[0,T]} \times \mathcal{P}_{\text{ac}}(\mathcal{Y})^{[0,T]}, \mathcal{TV}^{[0,T]}\right),$ the sequence admits a limit point $(\nu_t, \mu_t)_{t \in [0,T]} \in \left(\mathcal{P}_{\text{ac}}(\mathcal{X})^{[0,T]} \times \mathcal{P}_{\text{ac}}(\mathcal{Y})^{[0,T]}, \mathcal{TV}^{[0,T]}\right)$.

\emph{Step 2: The limit point $(\nu_t, \mu_t)_{t \in [0,T]}$ is a fixed point of $\varphi.$}

From Step 1, we obtain that for Lebesgue-almost all $t \in [0,T]$ we have
	\begin{equation*}
	\operatorname{TV}(\nu_t^{(n)},\nu_t) \to 0, \quad \operatorname{TV}(\mu_t^{(n)},\mu_t) \to 0, \quad \text{ as } n \to \infty.
	\end{equation*}
Therefore, by \eqref{eq: Lipschitz-psi} and \eqref{eq: Lipschitz-phi}, for Lebesgue-almost all $t \in [0,T]$ and any fixed $(x,y) \in \mathcal{X} \times \mathcal{Y},$ we have that $\Psi_{\sigma}(\nu_t^{(n-1)}, \mu_t^{(n-1)})(x)$ and $\Phi_{\sigma}(\nu_t^{(n-1)}, \mu_t^{(n-1)})(y)$ from \eqref{eq:Picard-nu} and \eqref{eq:Picard-mu} converge to $\Psi_{\sigma}(\nu_t, \mu_t)(x)$ and $\Phi_{\sigma}(\nu_t, \mu_t)(y),$ respectively, as $n \to \infty.$ Therefore, letting $n \to \infty$ in \eqref{eq:Picard-nu} and \eqref{eq:Picard-mu} and using the dominated convergence theorem (which is possible since $\Psi_{\sigma}, \Phi_{\sigma}$ are uniformly bounded due to Assumption \ref{assumption: boundedness-first-flat}), we conclude that $(\nu_t, \mu_t)_{t \in [0,T]}$ is a fixed point of $\varphi.$

\emph{Step 3: The fixed point $(\nu_t, \mu_t)_{t \in [0,T]}$ of $\varphi$ is unique.}

Suppose, for the contrary, that $\varphi$ admits two fixed points $(\nu_t, \mu_t)_{t \in [0,T]}$ and $(\bar{\nu}_t, \bar{\mu}_t)_{t \in [0,T]}$ such that $\nu_0 = \bar{\nu}_0$ and $\mu_0 = \bar{\mu}_0.$ Then repeating the same calculations from \eqref{eq: TV-nu} and \eqref{eq: TV-mu}, we arrive at
\begin{equation*}
        \operatorname{TV}(\nu_t, \bar{\nu}_t) + \operatorname{TV}(\mu_t,\bar{\mu}_t) \leq \alpha_L \int_0^t \left(\operatorname{TV}(\mu_s,\bar{\mu}_s) + \operatorname{TV}(\nu_s,\bar{\nu}_s)\right)\mathrm{d}s.
    \end{equation*}
    For each $t \in [0,T],$ denote $f(t) \coloneqq \int_0^t \left(\operatorname{TV}(\mu_s,\bar{\mu}_s) + \operatorname{TV}(\nu_s,\bar{\nu}_s)\right) \mathrm{d}s.$ Observe that $f \geq 0$ and $f(0) = 0.$ Then, by Gronwall's lemma, we obtain
    \begin{equation*}
        0 \leq f(t) \leq e^{\alpha_L t}f(0) = 0,
    \end{equation*}
    and hence 
    \begin{equation*}
        \operatorname{TV}(\nu_t, \bar{\nu}_t) + \operatorname{TV}(\mu_t,\bar{\mu}_t) = 0,
    \end{equation*}
    for Lebesgue-almost all $t \in [0,T],$ which implies
    \begin{equation*}
        \nu_t = \bar{\nu}_t, \quad \mu_t = \bar{\mu}_t,
    \end{equation*}
    for Lebesgue-almost all $t \in [0,T].$ Therefore, the fixed point $(\nu_t, \mu_t)_{t \in [0,T]}$ of $\phi$ must be unique.

    From Steps 1, 2 and 3, we obtain the existence and uniqueness of a pair of flows $(\nu_t, \mu_t)_{t \in [0,T]}$ satisfying \eqref{eq: mean-field-br} for any $T > 0.$
\end{proof}
Having proved Lemma \ref{thm:ContractivityBD}, we return to the proof of Proposition \ref{prop:Existence-flows-FP}.

\emph{Step 2: Existence of gradient flow on $[0,\infty).$}
From Lemma \ref{thm:ContractivityBD}, for any $T > 0,$ there exists unique flow $(\nu_t, \mu_t)_{t \in [0,T]}$ satisfying \eqref{eq: mean-field-br}. It remains to prove that the existence of this flow could be extended to $[0,\infty).$ Let $(\nu_t, \mu_t)_{t \in [0,T]}, (\nu'_t, \mu'_t)_{t \in [0,T]} \in \mathcal{P}_{\text{ac}}(\mathcal{X}) \times \mathcal{P}_{\text{ac}}(\mathcal{Y}).$ Then, using the calculations from Lemma \ref{thm:ContractivityBD}, we have that
\begin{equation*}
    \operatorname{TV}\left(\nu_t, \nu'_t\right) + \operatorname{TV}\left(\mu_t, \mu'_t\right) \leq \alpha_L \int_0^t \left(\operatorname{TV}\left(\nu_s, \nu'_s\right) + \operatorname{TV}\left(\mu_s, \mu'_s\right)\right) \mathrm{d}s, 
\end{equation*}
which shows that $(\nu_t, \mu_t)_{t \in [0,T]}$ do not blow up in any finite time, and therefore we can extend $(\nu_t, \mu_t)_{t \in [0,T]}$ globally to $(\nu_t, \mu_t)_{t \in [0,\infty)}.$ By \eqref{eq:argminpsi-density}, $\Psi_{\sigma}(\nu, \mu)$ admits a density of the form
\begin{multline*}
    \Psi_{\sigma}(\nu_t, \mu_t)(x) \coloneqq \frac{1}{Z(\nu_t, \mu_t)}\exp\left({-\frac{2}{\sigma^2}\frac{\delta F}{\delta \nu}(\nu_t, \mu_t, x) - U^{\pi}(x)}\right), \text{ with }\\ Z(\nu_t, \mu_t) = \int \exp\left({-\frac{2}{\sigma^2}\frac{\delta F}{\delta \nu}(\nu_t, \mu_t, x) - U^{\pi}(x)}\right) \mathrm{d}x.
\end{multline*}
For any fixed $x \in \mathcal{X},$ the flat derivative $t \mapsto \frac{\delta F}{\delta \nu}(\nu_t, \mu_t, x)$ is continuous on $[0, \infty)$ due to the fact that $\nu_t \in C\left([0, \infty), \mathcal{P}_{\text{ac}}(\mathcal{X})\right),$ $\mu_t \in C\left([0, \infty), \mathcal{P}_{\text{ac}}(\mathcal{Y})\right),$ and $(\nu, \mu) \mapsto \frac{\delta F}{\delta \nu}(\nu, \mu, x)$ is continuous. Moreover, the flat derivative $\frac{\delta F}{\delta \nu}(\nu_t, \mu_t, x)$ is bounded for every \(x \in \mathcal{X}\) and all $t \geq 0$ due to Assumption \ref{assumption: boundedness-first-flat}. Therefore, both terms $\frac{1}{Z(\nu_t, \mu_t)}$ and $\exp\left({-\frac{2}{\sigma^2}\frac{\delta F}{\delta \nu}(\nu_t, \mu_t, x) - U^{\pi}(x)}\right)$ are continuous in $t$ and bounded for every \(x \in \mathcal{X}\). Hence, we have that $\Psi_{\sigma}(\nu_t, \mu_t)(x)$ is continuous in $t$ and bounded for every \(x\ \in \mathcal{X}\). The same argument gives that $\Phi_{\sigma}(\nu_t, \mu_t)(y)$ is continuous in $t$ and bounded for every \(y\ \in \mathcal{X}\). But then this implies that the integrands in \eqref{equation:duhamel-nu} and \eqref{equation:duhamel-mu} are continuous in $s$ for all $t \geq 0$ and bounded for all \((x,y) \in \mathcal{X} \times \mathcal{Y}\). Hence, $\nu_t \in C^1\left([0, \infty), \mathcal{P}_{\text{ac}}(\mathcal{X})\right)$ and $\mu_t \in C^1\left([0, \infty), \mathcal{P}_{\text{ac}}(\mathcal{Y})\right).$
\end{proof}

\subsection{Additional results}
\label{subsec:additional}
We finally present three results: the first two concerning the existence and uniqueness of MNEs for games of the form \eqref{eq:F-nonlinear} and the last one illustrating how the regularized game is an approximation of the non-regularized game.
\begin{theorem}[\cite{conforti_game_2020}, Theorem $3.2$]
\label{thm: 2.6}
Let $p \geq 1.$ Suppose that $F$ admits first-order flat derivative (cf. Definition \ref{def:fderivative}) and that Assumption \ref{assumption: boundedness-first-flat} and the following hold:
\begin{enumerate}
    \item \label{eq: non-empty-convex} For any $(\nu, \mu) \in \mathcal{P}_p(\mathcal{X}) \times \mathcal{P}_p(\mathcal{Y}),$ the sets $\argmin_{\nu' \in \mathcal{P}_p(\mathcal{X})} V^{\sigma}(\nu',\mu)$ and \newline $\argmin_{\mu' \in \mathcal{P}_p(\mathcal{Y})} \{-V^{\sigma}(\nu,\mu')\}$ are non-empty and convex,
    \item \label{eq:continuity-F} The map $(\nu,\mu) \mapsto F(\nu, \mu)$ is jointly $\mathcal{W}_p$-continuous,
    \item \label{eq:U-growth} There exist $C_{\pi} \geq C'_{\pi} > 0$ and $C_{\rho} \geq C'_{\rho} > 0$ such that $U^{\pi}(x) \geq C'_{\pi}|x|^p - C_{\pi}$ and $U^{\rho}(y) \geq C'_{\rho}|y|^p-C_{\rho},$ for all $(x,y) \in \mathcal{X} \times \mathcal{Y}.$
\end{enumerate}
Then there exists at least one MNE $(\nu_{\sigma}^*, \mu_{\sigma}^*)$ of the game \eqref{eq:F-nonlinear}.
\end{theorem}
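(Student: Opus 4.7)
The plan is to obtain the MNE as a fixed point of the best-response correspondence
\[
BR(\nu, \mu) := \argmin_{\nu' \in \mathcal{P}_p(\mathcal{X})} V^\sigma(\nu', \mu) \times \argmax_{\mu' \in \mathcal{P}_p(\mathcal{Y})} V^\sigma(\nu, \mu')
\]
via the Kakutani-Fan-Glicksberg theorem on $\mathcal{P}_p(\mathcal{X}) \times \mathcal{P}_p(\mathcal{Y})$ equipped with the $\mathcal{W}_p$-topology. The first step is to localize the argument to a compact convex invariant set. Assumption \ref{assumption: boundedness-first-flat}, integrated along a straight-line interpolation of probability measures, implies that $F(\cdot, \mu)$ has oscillation at most $2C_{\nu}$ uniformly in $\mu$, and symmetrically for $F(\nu, \cdot)$. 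Comparing $V^\sigma(\cdot, \mu)$ at a minimizer $\nu^*_\mu$ with its value at the reference $\pi$ (so that the $\mu$-entropy cancels) therefore yields
\[
\tfrac{\sigma^2}{2}\operatorname{D_{KL}}(\nu^*_\mu | \pi) \leq F(\pi, \mu) - F(\nu^*_\mu, \mu) \leq 2C_{\nu},
\]
and symmetrically $\tfrac{\sigma^2}{2}\operatorname{D_{KL}}(\mu^*_\nu | \rho) \leq 2C_{\mu}$ for any maximizer $\mu^*_\nu$.

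Define $K := \{(\nu,\mu) : \operatorname{D_{KL}}(\nu|\pi) \leq 4C_{\nu}/\sigma^2,\ \operatorname{D_{KL}}(\mu|\rho) \leq 4C_{\mu}/\sigma^2\}$. Convexity of $K$ follows from convexity of the relative entropies. For $\mathcal{W}_p$-compactness, I would combine the entropy bound with the polynomial growth condition (\ref{eq:U-growth}) via the Donsker-Varadhan variational principle: for $\lambda \in (0, C'_\pi)$,
\[
\lambda \int_{\mathcal{X}} |x|^p \nu(\mathrm{d}x) \leq \operatorname{D_{KL}}(\nu|\pi) + \log \int_{\mathcal{X}} e^{\lambda |x|^p} \pi(\mathrm{d}x) + C_\pi,
\]
and the log-integral on the right is finite because $\pi(x) \propto e^{-U^\pi(x)}$ decays like $e^{-C'_\pi |x|^p}$. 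This produces a uniform $p$-th moment bound, giving tightness and $\mathcal{W}_p$-precompactness; closedness of $K$ in $\mathcal{W}_p$ follows from lower semi-continuity of relative entropy under weak convergence. By the bounds above $BR(K) \subseteq K$, and hypothesis (\ref{eq: non-empty-convex}) directly supplies non-emptiness and convexity of $BR(\nu, \mu)$ on $K$.

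The main remaining task, and the principal technical obstacle, is verifying the closed-graph property of $BR$ in $\mathcal{W}_p$. Suppose $(\nu_n, \mu_n) \to (\nu, \mu)$ and $(\nu'_n, \mu'_n) \in BR(\nu_n, \mu_n)$ with $(\nu'_n, \mu'_n) \to (\nu', \mu')$ in $\mathcal{W}_p$. The first-coordinate minimizing property reads, after cancelling the common $\mu_n$-entropy term,
\[
F(\nu'_n, \mu_n) + \tfrac{\sigma^2}{2}\operatorname{D_{KL}}(\nu'_n | \pi) \leq F(\tilde\nu, \mu_n) + \tfrac{\sigma^2}{2}\operatorname{D_{KL}}(\tilde\nu | \pi)
\]
for every fixed $\tilde\nu \in \mathcal{P}_p(\mathcal{X})$. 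Joint $\mathcal{W}_p$-continuity of $F$ (hypothesis (\ref{eq:continuity-F})) sends $F(\nu'_n, \mu_n) \to F(\nu', \mu)$ and $F(\tilde\nu, \mu_n) \to F(\tilde\nu, \mu)$, while lower semi-continuity of $\operatorname{D_{KL}}(\cdot | \pi)$ gives $\operatorname{D_{KL}}(\nu' | \pi) \leq \liminf_n \operatorname{D_{KL}}(\nu'_n | \pi)$; passing to the $\liminf$ therefore yields $V^\sigma(\nu', \mu) \leq V^\sigma(\tilde\nu, \mu)$, and the symmetric argument handles the $\mu$-coordinate. Hence $(\nu', \mu') \in BR(\nu, \mu)$, so $BR$ has a closed graph; Kakutani-Fan-Glicksberg then produces a fixed point of $BR$ in $K$, which is the desired MNE.
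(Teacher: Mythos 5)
Your overall architecture --- the best-response correspondence, localization to a compact convex invariant set, the closed-graph argument via joint $\mathcal{W}_p$-continuity of $F$ and lower semi-continuity of $\operatorname{D_{KL}}(\cdot|\pi)$, and then Kakutani--Fan--Glicksberg --- is exactly the paper's. The closed-graph step, the invariance $BR(K)\subseteq K$, and the convexity of $K$ are all fine. The gap is the claimed $\mathcal{W}_p$-compactness of $K$. The Donsker--Varadhan bound you invoke yields only a uniform bound on the $p$-th moments over $K$, and a uniform $p$-th moment bound gives tightness (relative compactness for the weak topology) but \emph{not} relative compactness in $\mathcal{W}_p$; for the latter one needs uniform integrability of $|x|^p$, e.g.\ a uniform bound on moments of some order $p'>p$. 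Entropy sublevel sets genuinely fail here: with $\pi$ the standard Gaussian, $p=2$, and $\nu_n=(1-\tfrac1n)\pi+\tfrac1n\rho_n$ where $\rho_n$ is a unit-width bump centred at $x_n=\sqrt{2Mn}$, one has $\operatorname{D_{KL}}(\nu_n|\pi)\to M$ and $\nu_n\to\pi$ weakly, yet $\int x^2\,\nu_n(\mathrm{d}x)\to\int x^2\,\pi(\mathrm{d}x)+2M$, so no subsequence converges in $\mathcal{W}_2$. Hence your $K$ is weakly compact but not $\mathcal{W}_p$-compact, and the fixed-point theorem cannot be applied on $(K,\mathcal{W}_p)$ as set up; nor can you simply switch to the weak topology, since condition (2) only gives $\mathcal{W}_p$-continuity of $F$.

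The paper closes precisely this hole by invoking the first-order condition (Proposition \ref{prop: 2.5}): every best response is an explicit Gibbs measure $\nu(x)=Z(\nu,\mu)^{-1}\exp\bigl(-\tfrac{2}{\sigma^2}\tfrac{\delta F}{\delta\nu}(\nu,\mu,x)-U^{\pi}(x)\bigr)$, whose density is bounded pointwise by a constant times $e^{-C'_{\pi}|x|^p}$ thanks to Assumption \ref{assumption: boundedness-first-flat} and condition (3). This yields uniform bounds on moments of order $p'>p$ strictly, and the resulting moment ball $\mathcal{S}$ is $\mathcal{W}_p$-compact and convex; the rest of the argument then proceeds as you describe. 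Your entropy estimate at the minimizer (comparison with the competitor $\pi$, using the oscillation bound $2C_{\nu}$ on $F(\cdot,\mu)$) is correct and would be a natural route in a weak-topology setting, but it does not control the tails finely enough for the $\mathcal{W}_p$ framework that condition (2) forces on you.
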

\begin{proof}
    The proof closely follows the one of \cite[Theorem $3.2$]{conforti_game_2020}. 
    For any $(\nu, \mu) \in \mathcal{P}_p(\mathcal{X}) \times \mathcal{P}_p(\mathcal{Y}),$ define $$\mathcal{R}_1(\mu) := \argmin_{\nu' \in \mathcal{P}_p(\mathcal{X})} V^{\sigma}(\nu',\mu), \quad \mathcal{R}_2(\nu) := \argmin_{\mu' \in \mathcal{P}_p(\mathcal{Y})} \{-V^{\sigma}(\nu,\mu')\}.$$ Note that, due to Condition \eqref{eq: non-empty-convex}, the sets $\mathcal{R}_1(\mu)$ and $\mathcal{R}_2(\nu)$ are non-empty and convex, for any $(\nu, \mu) \in \mathcal{P}_p(\mathcal{X}) \times \mathcal{P}_p(\mathcal{Y}),$ and so is $$\mathcal{R}(\nu, \mu) := \left\{(\bar{\nu}, \bar{\mu}) \in \mathcal{P}_p(\mathcal{X}) \times \mathcal{P}_p(\mathcal{Y}): \bar{\nu} \in \mathcal{R}_1(\mu), \bar{\mu} \in \mathcal{R}_2(\nu)\right\}.$$
    
    Due to Proposition \ref{prop: 2.5}, any $\nu \in \mathcal{R}_1(\mu)$ satisfies the first-order condition
    \begin{equation*}
        \nu(x) = \frac{1}{Z(\nu, \mu)}e^{-\frac{2}{\sigma^2}\frac{\delta F}{\delta \nu}(\nu, \mu, x) - U^{\pi}(x)},
    \end{equation*}
    where $Z(\nu, \mu) > 0$ is a normalization constant so that $\nu$ is a probability measure. By Assumption \ref{assumption: boundedness-first-flat} and Condition \eqref{eq:U-growth}, we have
    \begin{equation*}
        \nu(x) = \frac{1}{Z(\nu, \mu)}e^{-\frac{2}{\sigma^2}\frac{\delta F}{\delta \nu}(\nu, \mu, x) - U^{\pi}(x)} \leq \frac{1}{Z(\nu, \mu)}e^{\frac{2}{\sigma^2}C_{\nu} - C_{\pi}'|x|^p + C_{\pi}}.
    \end{equation*}
    Integrating the inequality above gives
    \begin{equation*}
        Z(\nu, \mu) \leq e^{C_{\pi} + \frac{2}{\sigma^2}C_{\nu}}\int_{\mathcal{X}}e^{- C_{\pi}'|x|^p}\mathrm{d}x < \infty,
    \end{equation*}
    since $|x|^p > 0,$ and hence $Z(\nu, \mu)$ is uniformly bounded. 
    
    Let $p' > p \geq 1.$ Then, by Assumption \ref{assumption: boundedness-first-flat} and Condition \eqref{eq:U-growth}, it follows that
    \begin{align*}
        \int_{\mathcal{X}} |x|^{p'} \nu(\mathrm{d}x) &= \frac{1}{Z(\nu,\mu)}\int_{\mathcal{X}} |x|^{p'}e^{-\frac{2}{\sigma^2}\frac{\delta F}{\delta \nu}(\nu, \mu, x) - U^{\pi}(x)} \mathrm{d}x \\
        &\leq \frac{1}{Z(\nu,\mu)}e^{C_{\pi} + \frac{2}{\sigma^2}C_{\nu}}\int_{\mathcal{X}}|x|^{p'}e^{- C_{\pi}'|x|^p}\mathrm{d}x < \infty.
    \end{align*}
    Therefore, we obtain 
    \begin{equation*}
        \overline{C}^{\nu} := \sup_{\nu \in \mathcal{R}_1(\mu)} \int_{\mathcal{X}} |x|^{p'} \nu(\mathrm{d}x) < \infty.
    \end{equation*}
    A similar argument gives
    \begin{equation*}
        \overline{C}^{\mu} := \sup_{\mu \in \mathcal{R}_2(\nu)} \int_{\mathcal{Y}} |y|^{p'} \mu(\mathrm{d}y) < \infty.
    \end{equation*}
    Define $$\mathcal{S}^{\nu} := \left\{\nu' \in \mathcal{P}_p(\mathcal{X}): \int_{\mathcal{X}} |x|^{p'} \nu'(\mathrm{d}x) \leq \overline{C}^{\nu}\right\}, \quad \mathcal{S}^{\mu} := \left\{\mu' \in \mathcal{P}_p(\mathcal{Y}): \int_{\mathcal{Y}} |y|^{p'} \mu'(\mathrm{d}y) \leq \overline{C}^{\mu}\right\},$$ and $$\mathcal{S} := \left\{(\Tilde{\nu}, \Tilde{\mu}) \in \mathcal{P}_p(\mathcal{X}) \times \mathcal{P}_p(\mathcal{Y}): \Tilde{\nu} \in \mathcal{S}^{\nu}, \Tilde{\mu} \in \mathcal{S}^{\mu}\right\}.$$ Above we showed that $\mathcal{R}(\nu, \mu) \subset \mathcal{S},$ for any $(\nu, \mu) \in \mathcal{P}_p(\mathcal{X}) \times \mathcal{P}_p(\mathcal{Y}),$ and so $\mathcal{S}$ is non-empty. Recall that $\mathcal{S}^{\nu}, \mathcal{S}^{\mu}$ are $\mathcal{W}_p$-compact (see, e.g., Subsection 1.3 in \cite{10.1214/20-AIHP1140}). A straightforward calculation shows that $\mathcal{S}^{\nu}, \mathcal{S}^{\mu}$ are also convex. Therefore, $\mathcal{S}$ is non-empty, $\mathcal{W}_p$-compact and convex. 
    
    Next, we show that the graph of the mapping $\mathcal{S} \ni (\nu,\mu) \mapsto \mathcal{R}(\nu,\mu) \subset \mathcal{S}$ is $\mathcal{W}_p$-closed, i.e. given $(\nu_{\infty}, \mu_{\infty}), (\nu'_{\infty}, \mu'_{\infty}) \in \mathcal{S}$, for any $(\nu_n, \mu_n) \to (\nu_{\infty}, \mu_{\infty})$ and $(\nu'_n, \mu'_n) \to (\nu'_{\infty}, \mu'_{\infty})$ in $\mathcal{W}_p$ and $(\nu_n', \mu_n') \in \mathcal{R}(\nu_n, \mu_n)$, it follows that $(\nu'_{\infty}, \mu'_{\infty}) \in \mathcal{R}(\nu_{\infty}, \mu_{\infty})$.

    By Condition \eqref{eq:continuity-F} and the lower semi-continuity of $\nu \mapsto \operatorname{D_{KL}}(\nu|\pi)$, we have 
    \begin{multline}
    \label{eq:lsc-nu}
    \begin{aligned}
        V^{\sigma}(\nu'_{\infty}, \mu_{\infty}) + \frac{\sigma^2}{2}\operatorname{D_{KL}}(\mu_{\infty}|\rho) &= F(\nu'_{\infty}, \mu_{\infty}) + \frac{\sigma^2}{2}\operatorname{D_{KL}}(\nu'_{\infty}|\pi)\\
        &\leq \liminf_{n \to \infty} F(\nu_n', \mu_n) + \frac{\sigma^2}{2}\liminf_{n \to \infty}\operatorname{D_{KL}}(\nu_n'|\pi)\\
        &\leq \liminf_{n \to \infty} \left(F(\nu_n', \mu_n) + \frac{\sigma^2}{2}\operatorname{D_{KL}}(\nu_n'|\pi)\right).
    \end{aligned}
    \end{multline}
    Since $\nu_n' \in \mathcal{R}_1(\mu_n),$ for each $n,$ we have
    \begin{equation*}
        \nu_n' \in \argmin_{\nu' \in \mathcal{P}_p(\mathcal{X})} V^{\sigma}(\nu', \mu_n) = \argmin_{\nu' \in \mathcal{P}_p(\mathcal{X})} \left(F(\nu', \mu_n) + \frac{\sigma^2}{2}\operatorname{D_{KL}}(\nu'|\pi)\right),
    \end{equation*}
    and hence \eqref{eq:lsc-nu} becomes
    \begin{align*}
        V^{\sigma}(\nu'_{\infty}, \mu_{\infty}) + \frac{\sigma^2}{2}\operatorname{D_{KL}}(\mu_{\infty}|\rho) &\leq \liminf_{n \to \infty} \left(F(\nu, \mu_n) + \frac{\sigma^2}{2}\operatorname{D_{KL}}(\nu|\pi)\right)\\
        &= F(\nu, \mu_{\infty}) + \frac{\sigma^2}{2}\operatorname{D_{KL}}(\nu|\pi) = V^{\sigma}(\nu,\mu_{\infty}) + \frac{\sigma^2}{2}\operatorname{D_{KL}}(\mu_{\infty}|\rho),
    \end{align*}
    for any $\nu \in \mathcal{S}^{\nu}$. Therefore, $\nu'_{\infty} \in \mathcal{R}_1(\mu_{\infty}).$

    Similarly, by Condition \eqref{eq:continuity-F} and the upper semi-continuity of $\mu \mapsto -\operatorname{D_{KL}}(\mu|\rho)$, we have 
    \begin{multline}
    \label{eq:lsc-mu}
    \begin{aligned}
        V^{\sigma}(\nu_{\infty}, \mu'_{\infty}) - \frac{\sigma^2}{2}\operatorname{D_{KL}}(\nu_{\infty}|\pi) &= F(\nu_{\infty}, \mu'_{\infty}) - \frac{\sigma^2}{2}\operatorname{D_{KL}}(\mu'_{\infty}|\rho)\\
        &\geq \limsup_{n \to \infty} F(\nu_n, \mu_n') + \frac{\sigma^2}{2} \limsup_{n \to \infty}\left(-\operatorname{D_{KL}}(\mu_n'|\rho)\right)\\
        &\geq \limsup_{n \to \infty} \left(F(\nu_n, \mu_n') - \frac{\sigma^2}{2}\operatorname{D_{KL}}(\mu_n'|\rho)\right).
    \end{aligned}
    \end{multline}
    Since $\mu_n' \in \mathcal{R}_2(\nu_n),$ for each $n,$ we have
    \begin{equation*}
        \mu_n' \in \argmax_{\mu' \in \mathcal{P}_p(\mathcal{Y})} V^{\sigma}(\nu_n, \mu') = \argmax_{\mu' \in \mathcal{P}_p(\mathcal{Y})} \left(F(\nu_n, \mu') - \frac{\sigma^2}{2}\operatorname{D_{KL}}(\mu'|\rho)\right),
    \end{equation*}
    and hence \eqref{eq:lsc-mu} becomes
    \begin{align*}
        V^{\sigma}(\nu_{\infty}, \mu'_{\infty}) - \frac{\sigma^2}{2}\operatorname{D_{KL}}(\nu_{\infty}|\pi) &\geq \limsup_{n \to \infty} \left(F(\nu_n, \mu) - \frac{\sigma^2}{2}\operatorname{D_{KL}}(\mu|\rho)\right)\\ &= F(\nu_{\infty}, \mu) - \frac{\sigma^2}{2}\operatorname{D_{KL}}(\mu|\rho) = V^{\sigma}(\nu_{\infty}, \mu) - \frac{\sigma^2}{2}\operatorname{D_{KL}}(\nu_{\infty}|\pi),
    \end{align*}
    for any $\mu \in \mathcal{S}^{\mu}$. Therefore, $\mu'_{\infty} \in \mathcal{R}_2(\nu_{\infty})$. Hence, we obtain that $(\nu'_{\infty}, \mu'_{\infty}) \in \mathcal{R}(\nu_{\infty}, \mu_{\infty}),$ as required.

    Putting everything together, we showed that:
    \begin{itemize}
        \item $\mathcal{R}(\nu,\mu)$ is non-empty and convex,
        \item $\mathcal{S}$ is non-empty, $\mathcal{W}_p$-compact and convex,
        \item the graph of the mapping $\mathcal{S} \ni (\nu,\mu) \mapsto \mathcal{R}(\nu,\mu) \subset \mathcal{S}$ is $\mathcal{W}_p$-closed.
    \end{itemize}
    Hence, by the Kakutani-Fan-Glicksberg fixed point theorem (see, e.g., \cite[Corollary 17.55]{aliprantis2007infinite}), the set of fixed points of the mapping $\mathcal{S} \ni (\nu,\mu) \mapsto \mathcal{R}(\nu,\mu) \subset \mathcal{S}$ is non-empty. Thus, there exists $(\nu_{\sigma}^*, \mu_{\sigma}^*) \in \mathcal{S}$ such that $(\nu_{\sigma}^*, \mu_{\sigma}^*) \in \mathcal{R}(\nu_{\sigma}^*, \mu_{\sigma}^*).$ Then, $\nu_{\sigma}^* \in \argmin_{\nu' \in \mathcal{P}_p(\mathcal{X})} V^{\sigma}(\nu',\mu_{\sigma}^*)$ and $\mu_{\sigma}^* \in \argmin_{\mu' \in \mathcal{P}_p(\mathcal{Y})} \{-V^{\sigma}(\nu_{\sigma}^*,\mu')\}.$ Therefore, $V(\nu_{\sigma}^*, \mu) \leq V(\nu_{\sigma}^*, \mu_{\sigma}^*) \leq V(\nu, \mu_{\sigma}^*)$, for all $(\nu,\mu) \in \mathcal{P}_p(\mathcal{X}) \times \mathcal{P}_p(\mathcal{X})$, i.e., $(\nu_{\sigma}^*, \mu_{\sigma}^*)$ is a MNE of the game \eqref{eq:F-nonlinear}. 
\end{proof}
\begin{lemma}[Uniqueness of MNE]
\label{lemma:Uniqueness-saddle-point}
     For $V^{\sigma}$ given by \eqref{eq:F-nonlinear}, if Assumption \ref{assumption: assump-F-conv-conc} holds and $(\nu_{\sigma}^*, \mu_{\sigma}^*) \in \mathcal{P}_{\text{ac}}(\mathcal{X}) \times \mathcal{P}_{\text{ac}}(\mathcal{Y})$ is a saddle point of $V^{\sigma}$, that is $V^{\sigma}(\nu_{\sigma}^*, \mu) \leq V^{\sigma}(\nu_{\sigma}^*, \mu_{\sigma}^*) \leq V^{\sigma}(\nu, \mu_{\sigma}^*)$, for all $(\nu, \mu) \in \mathcal{P}(\mathcal{X}) \times \mathcal{P}(\mathcal{Y})$, then it is unique. 
\end{lemma}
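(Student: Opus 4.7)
The plan is to exploit the fact that the entropic regularization makes $V^{\sigma}$ \emph{strictly} convex-concave even though $F$ itself is only assumed to be convex-concave. The strictness comes entirely from the KL term: the map $\nu \mapsto \operatorname{D_{KL}}(\nu|\pi)$ is strictly convex on $\mathcal{P}_{\text{ac}}(\mathcal{X})$ (a consequence of the strict convexity of $r \mapsto r \log r$ on $(0,\infty)$), and likewise $\mu \mapsto -\operatorname{D_{KL}}(\mu|\rho)$ is strictly concave on $\mathcal{P}_{\text{ac}}(\mathcal{Y})$. Consequently, for any fixed $\mu$, the functional $\nu \mapsto V^{\sigma}(\nu,\mu)$ is strictly convex (sum of a convex and a strictly convex term), and symmetrically for the other variable.

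To carry out the argument I would suppose for contradiction that $(\nu_1^*, \mu_1^*)$ and $(\nu_2^*, \mu_2^*)$ are two saddle points of $V^{\sigma}$. Writing out the four saddle-point inequalities from \eqref{eq: nashdefmeasures} applied to both points, one obtains the chain
\begin{equation*}
V^{\sigma}(\nu_1^*, \mu_1^*) \leq V^{\sigma}(\nu_2^*, \mu_1^*) \leq V^{\sigma}(\nu_2^*, \mu_2^*) \leq V^{\sigma}(\nu_1^*, \mu_2^*) \leq V^{\sigma}(\nu_1^*, \mu_1^*),
\end{equation*}
so that all four quantities are equal. In particular, $V^{\sigma}(\nu_2^*, \mu_1^*) = V^{\sigma}(\nu_1^*, \mu_1^*) = \min_{\nu \in \mathcal{P}_{\text{ac}}(\mathcal{X})} V^{\sigma}(\nu, \mu_1^*)$, which means $\nu_2^*$ also minimizes $\nu \mapsto V^{\sigma}(\nu, \mu_1^*)$.

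The key step is then to invoke strict convexity: since $\nu \mapsto V^{\sigma}(\nu, \mu_1^*)$ is strictly convex on the convex set $\mathcal{P}_{\text{ac}}(\mathcal{X})$, it admits at most one minimizer, forcing $\nu_1^* = \nu_2^*$. Applying the same reasoning to $\mu \mapsto V^{\sigma}(\nu_1^*, \mu)$, which is strictly concave and thus has at most one maximizer, yields $\mu_1^* = \mu_2^*$, completing the proof.

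The only nontrivial point to verify carefully is the strict convexity of $\operatorname{D_{KL}}(\cdot|\pi)$ on $\mathcal{P}_{\text{ac}}(\mathcal{X})$: given distinct $\nu, \nu' \in \mathcal{P}_{\text{ac}}(\mathcal{X})$ with finite KL and $\lambda \in (0,1)$, one has $\lambda \nu + (1-\lambda)\nu' \in \mathcal{P}_{\text{ac}}(\mathcal{X})$ and, because $r \mapsto r \log r$ is strictly convex, the strict inequality $\operatorname{D_{KL}}(\lambda \nu + (1-\lambda)\nu'|\pi) < \lambda \operatorname{D_{KL}}(\nu|\pi) + (1-\lambda)\operatorname{D_{KL}}(\nu'|\pi)$ holds on the set where $\nu(x) \neq \nu'(x)$, which has positive Lebesgue measure. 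I do not anticipate any real obstacle — the argument is a textbook saddle-point-uniqueness argument, with the entropy regularization doing all the work.
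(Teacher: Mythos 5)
Your proof is correct, but it follows a genuinely different route from the paper's. The paper argues via the first-order characterization of optimizers (Proposition \ref{prop: 2.5}): for each of the two hypothetical saddle points it combines the convexity/concavity inequalities \eqref{eq:convexF}--\eqref{eq:concaveF} with the Gibbs-type first-order condition to obtain the quantitative bounds $V^{\sigma}(\hat{\nu}_{\sigma}^*, \mu_{\sigma}^*) - V^{\sigma}(\nu_{\sigma}^*, \mu_{\sigma}^*) \geq \frac{\sigma^2}{2}\operatorname{D_{KL}}(\hat{\nu}_{\sigma}^*|\nu_{\sigma}^*)$ and its three companions, and then sums all four so that the left-hand sides cancel and the nonnegative KL terms are forced to vanish; this is in effect a relative-strong-convexity argument, and it reuses machinery (Prop.~\ref{prop: 2.5}) that the paper needs anyway for Lemma~\ref{lemma: NI-equals-KL}. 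You instead use the classical interchangeability of saddle points (the four-term equality chain) together with the strict convexity of $\nu \mapsto \operatorname{D_{KL}}(\nu|\pi)$, so that two distinct minimizers of the strictly convex map $\nu \mapsto V^{\sigma}(\nu,\mu_1^*)$ would be contradictory; this is more elementary, avoids the first-order condition entirely, and correctly identifies that the regularizer alone supplies the strictness. Two small points worth making explicit in your write-up: (i) the interchangeability chain also shows that $\mu_2^*$ maximizes $\mu \mapsto V^{\sigma}(\nu_1^*,\mu)$ (you need this for the second half, and it does follow from $V^{\sigma}(\nu_1^*,\mu_2^*)=V^{\sigma}(\nu_1^*,\mu_1^*)$); and (ii) one should note that $\operatorname{D_{KL}}(\nu_i^*|\pi)$ and $\operatorname{D_{KL}}(\mu_i^*|\rho)$ are finite (e.g.\ by testing the saddle inequalities against $\pi$ and $\rho$ and using that $F$ is real-valued), so that the convex combinations in the strict-convexity step are well defined; the paper sidesteps this because the first-order condition already exhibits the saddle point as a Gibbs measure with finite relative entropy.
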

\begin{proof}
Suppose to the contrary that $(\nu_{\sigma}^*, \mu_{\sigma}^*), (\hat{\nu}_{\sigma}^*, \hat{\mu}_{\sigma}^*)  \in \mathcal{P}_{\text{ac}}(\mathcal{X}) \times \mathcal{P}_{\text{ac}}(\mathcal{Y})$ are two saddle points of $V.$ Then, from Proposition \ref{prop: 2.5}, we can write the first order condition
\begin{equation*}
    \frac{\delta F}{\delta \nu}(\nu_{\sigma}^*, \mu_{\sigma}^*, x) + \frac{\sigma^2}{2}\log \left(\frac{\nu_{\sigma}^*(x)}{\pi(x)}\right) = \operatorname{constant}, 
\end{equation*}
\begin{equation*}
    \frac{\delta F}{\delta \mu}(\nu_{\sigma}^*, \mu_{\sigma}^*, y) - \frac{\sigma^2}{2}\log \left(\frac{\mu_{\sigma}^*(y)}{\rho(y)}\right) = \operatorname{constant},
\end{equation*}
for all $(x, y) \in \mathcal{X} \times \mathcal{Y}$ Lebesgue almost surely and the same two equations also hold for $(\hat{\nu}_{\sigma}^*, \hat{\mu}_{\sigma}^*).$ Then, by Assumption \ref{assumption: assump-F-conv-conc}, we have 
\begin{align*}
    V^{\sigma}(\hat{\nu}_{\sigma}^*, \mu_{\sigma}^*) - V^{\sigma}(\nu_{\sigma}^*, \mu_{\sigma}^*) &\geq \int_{\mathcal{X}} \left(\frac{\delta F}{\delta \nu}(\nu_{\sigma}^*, \mu_{\sigma}^*, x) + \frac{\sigma^2}{2}\log \left(\frac{\nu_{\sigma}^*(x)}{\pi(x)}\right)\right)(\hat{\nu}_{\sigma}^*-\nu_{\sigma}^*)(\mathrm{d}x)\\ &+ \frac{\sigma^2}{2}\operatorname{D_{KL}}(\hat{\nu}_{\sigma}^*|\nu_{\sigma}^*) = \frac{\sigma^2}{2}\operatorname{D_{KL}}(\hat{\nu}_{\sigma}^*|\nu_{\sigma}^*),
\end{align*}
\begin{align*}
    V^{\sigma}(\nu_{\sigma}^*, \hat{\mu}_{\sigma}^*) - V^{\sigma}(\nu_{\sigma}^*, \mu_{\sigma}^*) &\leq \int_{\mathcal{Y}} \left(\frac{\delta F}{\delta \mu}(\nu_{\sigma}^*, \mu_{\sigma}^*, y) - \frac{\sigma^2}{2}\log \left(\frac{\mu_{\sigma}^*(y)}{\rho(y)}\right)\right)(\hat{\mu}_{\sigma}^*-\mu_{\sigma}^*)(\mathrm{d}y)\\ &- \frac{\sigma^2}{2}\operatorname{D_{KL}}(\hat{\mu}_{\sigma}^*|\mu_{\sigma}^*) = - \frac{\sigma^2}{2}\operatorname{D_{KL}}(\hat{\mu}_{\sigma}^*|\mu_{\sigma}^*),
\end{align*}
where the equalities follow from the first order condition. Swapping $\nu_{\sigma}^*$ and $\mu_{\sigma}^*$ with $\hat{\nu}_{\sigma}^*$ and $\hat{\mu}_{\sigma}^*$ in the inequalities above, we get the analogous
\begin{align*}
    V^{\sigma}(\nu_{\sigma}^*, \hat{\mu}_{\sigma}^*) - V^{\sigma}(\hat{\nu}_{\sigma}^*, \hat{\mu}_{\sigma}^*) \geq \frac{\sigma^2}{2}\operatorname{D_{KL}}(\nu_{\sigma}^*|\hat{\nu}_{\sigma}^*),
\end{align*}
\begin{align*}
    V^{\sigma}(\hat{\nu}_{\sigma}^*, \mu_{\sigma}^*) - V^{\sigma}(\hat{\nu}_{\sigma}^*, \hat{\mu}_{\sigma}^*) \leq - \frac{\sigma^2}{2}\operatorname{D_{KL}}(\mu_{\sigma}^*|\hat{\mu}_{\sigma}^*).
\end{align*}
Multiplying the second and the forth inequalities by $-1$ and adding all inequalities gives
\begin{equation*}
    0 \geq \frac{\sigma^2}{2}\operatorname{D_{KL}}(\hat{\nu}_{\sigma}^*|\nu_{\sigma}^*) + \frac{\sigma^2}{2}\operatorname{D_{KL}}(\hat{\mu}_{\sigma}^*|\mu_{\sigma}^*) + \frac{\sigma^2}{2}\operatorname{D_{KL}}(\nu_{\sigma}^*|\hat{\nu}_{\sigma}^*) + \frac{\sigma^2}{2}\operatorname{D_{KL}}(\mu_{\sigma}^*|\hat{\mu}_{\sigma}^*).
\end{equation*}
Since $\operatorname{D_{KL}}(m|m') \geq 0$ for all $m, m' \in \mathcal{P}(\mathcal{M}),$ where $\mathcal{M} \subseteq \mathbb R^d$, with equality if and only if $m=m',$ it follows that
\begin{equation*}
    \nu_{\sigma}^* = \hat{\nu}_{\sigma}^* \text{ and }  \mu_{\sigma}^* = \hat{\mu}_{\sigma}^*,
\end{equation*}
and hence $V^{\sigma}$ has a unique saddle point. 
\end{proof}
The following result shows the relation between the regularized game and the non-regularized one as $\sigma \to 0.$
\begin{proposition}
\label{prop:gamma-convergence}
    Suppose that the assumptions of Theorem \ref{thm: 2.6} hold, that $F$ admits a saddle point $(\nu^*, \mu^*)$ on $\mathcal{P}_p(\mathcal{X}) \times \mathcal{P}_p(\mathcal{Y}),$ and that there exist $C_{\pi}, C_{\rho} > 0$ such that $U^{\pi}(x) \leq C_{\pi}\left(1+|x|^p\right)$ and $U^{\rho}(y) \leq C_{\rho}\left(1+|y|^p\right),$ for all $(x,y) \in \mathcal{X} \times \mathcal{Y}.$ Then, given the saddle point $(\nu_{\sigma}^*, \mu_{\sigma}^*) \in \mathcal{P}_p(\mathcal{X}) \times \mathcal{P}_p(\mathcal{Y})$ of $V^{\sigma},$ it holds
    \begin{equation*}
        \lim_{\sigma \to 0} V^{\sigma}(\nu_{\sigma}^*, \mu_{\sigma}^*) = F(\nu^*,\mu^*).
    \end{equation*}
\end{proposition}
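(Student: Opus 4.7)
Plan. The strategy is a two-sided sandwich: insert $(\nu^*, \mu^*)$ into the regularized saddle-point inequality for $V^\sigma$, and then invoke the unregularized saddle-point property of $F$ to pinch both sides to $F(\nu^*, \mu^*)$ up to an $O(\sigma^2)$ correction.

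I would start from the saddle-point property of $(\nu_\sigma^*, \mu_\sigma^*)$ for $V^\sigma$, which gives
\begin{equation*}
V^\sigma(\nu_\sigma^*, \mu^*) \leq V^\sigma(\nu_\sigma^*, \mu_\sigma^*) \leq V^\sigma(\nu^*, \mu_\sigma^*).
\end{equation*}
Expanding the definition of $V^\sigma$ and discarding the nonnegative terms $\tfrac{\sigma^2}{2}\operatorname{D_{KL}}(\nu_\sigma^*|\pi)$ and $\tfrac{\sigma^2}{2}\operatorname{D_{KL}}(\mu_\sigma^*|\rho)$ yields
\begin{equation*}
F(\nu_\sigma^*, \mu^*) - \tfrac{\sigma^2}{2}\operatorname{D_{KL}}(\mu^*|\rho) \leq V^\sigma(\nu_\sigma^*, \mu_\sigma^*) \leq F(\nu^*, \mu_\sigma^*) + \tfrac{\sigma^2}{2}\operatorname{D_{KL}}(\nu^*|\pi).
\end{equation*}
Applying the saddle-point inequalities $F(\nu^*, \mu_\sigma^*) \leq F(\nu^*, \mu^*) \leq F(\nu_\sigma^*, \mu^*)$ for $F$ then collapses both sides to $F(\nu^*, \mu^*)$:
\begin{equation*}
F(\nu^*, \mu^*) - \tfrac{\sigma^2}{2}\operatorname{D_{KL}}(\mu^*|\rho) \leq V^\sigma(\nu_\sigma^*, \mu_\sigma^*) \leq F(\nu^*, \mu^*) + \tfrac{\sigma^2}{2}\operatorname{D_{KL}}(\nu^*|\pi),
\end{equation*}
so letting $\sigma \to 0$ concludes the argument provided the two relative entropies are finite.

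The main obstacle is that an unregularized saddle point in $\mathcal{P}_p(\mathcal{X}) \times \mathcal{P}_p(\mathcal{Y})$ need not be absolutely continuous with respect to Lebesgue measure, in which case $\operatorname{D_{KL}}(\nu^*|\pi) = \operatorname{D_{KL}}(\mu^*|\rho) = +\infty$ and the sandwich is vacuous. To resolve this, I would mollify $(\nu^*, \mu^*)$ by a Gaussian kernel $\gamma_\delta$, setting $\nu^*_\delta := \nu^* * \gamma_\delta$ and $\mu^*_\delta := \mu^* * \gamma_\delta$, so that $\nu^*_\delta \to \nu^*$ and $\mu^*_\delta \to \mu^*$ in $\mathcal{W}_p$. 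The upper growth bound $U^\pi(x) \leq C_\pi(1+|x|^p)$ together with the finite $p$-th moments of $\nu^*$ controls $\int U^\pi \, d\nu^*_\delta$, while the bounded density of $\gamma_\delta$ and a Gaussian comparison bound the differential entropy $H(\nu^*_\delta)$; via the decomposition $\operatorname{D_{KL}}(\nu^*_\delta|\pi) = -H(\nu^*_\delta) + \int U^\pi \, d\nu^*_\delta$ this delivers $\operatorname{D_{KL}}(\nu^*_\delta|\pi) < \infty$, and symmetrically $\operatorname{D_{KL}}(\mu^*_\delta|\rho) < \infty$. Running the sandwich with $(\nu^*_\delta, \mu^*_\delta)$ in place of $(\nu^*, \mu^*)$ produces cross terms $F(\nu^*_\delta, \mu_\sigma^*)$ and $F(\nu_\sigma^*, \mu^*_\delta)$; the hardest step will be to first send $\sigma \to 0$ (killing the $\sigma^2$-entropy contributions) and then $\delta \to 0$, combining the joint $\mathcal{W}_p$-continuity of $F$ from Theorem \ref{thm: 2.6}(2), the boundedness of $F$ that follows from Assumption \ref{assumption: boundedness-first-flat}, and once more the saddle property of $F$ to identify the double limit as $F(\nu^*, \mu^*)$. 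Because $\{\nu_\sigma^*\}_\sigma$ and $\{\mu_\sigma^*\}_\sigma$ are not a priori $\mathcal{W}_p$-precompact as $\sigma \to 0$, this step will require a subsequential extraction argument exploiting the uniform bound on $F$.
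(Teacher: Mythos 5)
Your opening sandwich is correct as far as it goes: testing the $V^{\sigma}$-saddle inequality against $(\nu^*,\mu^*)$, discarding the nonnegative entropies of the regularized equilibrium, and collapsing the cross terms with the saddle property of $F$ does give
$F(\nu^*,\mu^*)-\tfrac{\sigma^2}{2}\operatorname{D_{KL}}(\mu^*|\rho)\leq V^{\sigma}(\nu_{\sigma}^*,\mu_{\sigma}^*)\leq F(\nu^*,\mu^*)+\tfrac{\sigma^2}{2}\operatorname{D_{KL}}(\nu^*|\pi)$,
and you correctly diagnose that this is vacuous unless both relative entropies are finite, which is not guaranteed. The gap is in the repair. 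Once you replace $(\nu^*,\mu^*)$ by mollifications $(\nu^*_{\delta},\mu^*_{\delta})$, the saddle property of $F$ at $(\nu^*,\mu^*)$ no longer collapses the cross terms $F(\nu_{\sigma}^*,\mu^*_{\delta})$ and $F(\nu^*_{\delta},\mu_{\sigma}^*)$, and the route you propose for the double limit --- subsequential extraction of $\{(\nu_{\sigma}^*,\mu_{\sigma}^*)\}_{\sigma}$ --- is not available: a uniform bound on the \emph{values} of $F$ gives no tightness of the measures, and the only moment bounds one has on the regularized equilibria (from the proof of Theorem \ref{thm: 2.6}) degenerate like $e^{c/\sigma^2}$ as $\sigma\to 0$. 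So the ``hardest step'' you flag is genuinely unresolved as stated, and the proposed mechanism for resolving it would fail.

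The paper's proof avoids both difficulties. First, it mollifies at scale $\sigma$ itself (not an independent $\delta$) and computes the explicit bound $\operatorname{D_{KL}}(\nu_{\sigma}|\pi)\leq \int h(f)\,\mathrm{d}x - d\log\sigma + C_{\pi}(1+\int|z|^p\nu(\mathrm{d}z))$ via Jensen's inequality for $h(z)=z\log z$ and the upper growth bound on $U^{\pi}$; the prefactor $\sigma^2/2$ then kills the $-d\log\sigma$ blow-up in a single limit, so no iterated limit is needed. Second, and more importantly, it never needs to control where $\nu_{\sigma}^*$ or $\mu_{\sigma}^*$ go: for \emph{arbitrary} $\mu\in\mathcal{P}_p(\mathcal{Y})$ it bounds $V^{\sigma}(\nu_{\sigma}^*,\mu_{\sigma}^*)$ from below by $\inf_{\nu}F(\nu,\mu_{\sigma})$ up to a vanishing error (and symmetrically from above by $\sup_{\mu}F(\nu_{\sigma},\mu)$ for arbitrary $\nu$), then takes $\sup_{\mu}\inf_{\nu}$ and $\inf_{\nu}\sup_{\mu}$ and uses the minimax identity $\sup\inf F=\inf\sup F=F(\nu^*,\mu^*)$ guaranteed by the existence of a saddle point of $F$. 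This eliminates the regularized equilibria from the right-hand sides entirely, which is exactly what your compactness argument was trying (and failing) to do. If you adopt the scale-$\sigma$ mollification and the $\sup\inf/\inf\sup$ pinching, your argument becomes the paper's.
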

\begin{proof}
Let $f:\mathcal{X} \cup \mathcal{Y} \to (0, \infty)$ be the Gaussian kernel given by $f(x) = (2\pi)^{-\frac{d}{2}}\exp\left(-\frac{1}{2}|x|^2\right).$ Then, define the mollifier $f_{\sigma}(x) \coloneqq \sigma^{-d} f\left(\frac{x}{\sigma}\right).$ Given $(\nu,\mu) \in \mathcal{P}_{p}(\mathcal{X}) \times \mathcal{P}_{p}(\mathcal{Y}),$ the mollifications $(\nu_{\sigma}, \mu_{\sigma}) \coloneqq (\nu \ast f_{\sigma}, \mu \ast f_{\sigma})$ of $(\nu, \mu)$ are given by
\begin{equation*}
    \nu_{\sigma}(x) \coloneqq \int_{\mathcal{X}} f_{\sigma}(x-z) \nu(\mathrm{d}z),\quad
    \mu_{\sigma}(y) \coloneqq \int_{\mathcal{X}} f_{\sigma}(y-w) \mu(\mathrm{d}w).
\end{equation*}
Then since $h(x) \coloneqq x\log(x)$ is convex, it follows by Jensen's inequality that
\begin{align*}
    \int_{\mathcal{X}} h(\nu_{\sigma}(x)) \mathrm{d}x &\leq \int_{\mathcal{X}} \int_{\mathcal{X}} h(f_{\sigma}(x-z)) \nu(\mathrm{d}z)\mathrm{d}x = \int_{\mathcal{X}} h(f_{\sigma}(x))\mathrm{d}x\\ &= \int_{\mathcal{X}} h\left(\sigma^{-d} f\left(\frac{x}{\sigma}\right)\right)\mathrm{d}x = \int_{\mathcal{X}} h(f(x))\mathrm{d}x - d\log \sigma.
\end{align*}
On the other hand, using the fact that $U^{\pi}(x) \leq C_{\pi}\left(1+|x|^p\right)$ for some $C_{\pi} > 0,$ we have
\begin{align*}
    \int_{\mathcal{X}} \nu_{\sigma}(x)\log \pi(x) \mathrm{d}x &= \int_{\mathcal{X}} \int_{\mathcal{X}} \log \pi(x) f_{\sigma}(x-z) \nu(\mathrm{d}z)\mathrm{d}x = -\int_{\mathcal{X}} \int_{\mathcal{X}} U^{\pi}(x+z) f_{\sigma}(x) \mathrm{d}x\nu(\mathrm{d}z)\\ &\geq -C_{\pi}\int_{\mathcal{X}} \int_{\mathcal{X}} (1+|x+z|^p) f_{\sigma}(x) \mathrm{d}x\nu(\mathrm{d}z) = -C_{\pi} \left(1 + \int_{\mathcal{X}}|z|^p\nu(\mathrm{d}z)\right) < \infty.
\end{align*}
Therefore, we have
\begin{equation}
\label{eq:upper-bound-mollifier-nu}
    \operatorname{D_{KL}}(\nu_{\sigma}|\pi) \leq \int_{\mathcal{X}} h(f(x))\mathrm{d}x - d\log \sigma + C_{\pi} \left(1 + \int_{\mathcal{X}}|z|^p\nu(\mathrm{d}z)\right) < \infty.
\end{equation}
Analogously, we obtain
\begin{equation}
\label{eq:upper-bound-mollifier-mu}
    \operatorname{D_{KL}}(\mu_{\sigma}|\rho) \leq \int_{\mathcal{Y}} h(f(y))\mathrm{d}y - d\log \sigma + C_{\rho} \left(1 + \int_{\mathcal{Y}}|w|^p\mu(\mathrm{d}w)\right) < \infty.
\end{equation}
Recall that since $(\nu_{\sigma}^*, \mu_{\sigma}^*) \in \mathcal{P}_p(\mathcal{X}) \times \mathcal{P}_p(\mathcal{Y})$ is the saddle point of $V^{\sigma},$ it holds
\begin{equation*}
    V^{\sigma}(\nu_{\sigma}^*, \mu) \leq V^{\sigma}(\nu_{\sigma}^*, \mu_{\sigma}^*) \leq V^{\sigma}(\nu, \mu_{\sigma}^*), \quad \text{for all } (\nu, \mu) \in \mathcal{P}_p(\mathcal{X}) \times \mathcal{P}_p(\mathcal{Y}).
\end{equation*}
Let $\mu \in \mathcal{P}_p(\mathcal{Y}).$ Then, we have
    \begin{align*}
        V^{\sigma}(\nu_{\sigma}^*, \mu_{\sigma}^*) &+ \frac{\sigma^2}{2}\left(\int_{\mathcal{Y}} h(f(y))\mathrm{d}y - d\log \sigma + C_{\rho} \left(1 + \int_{\mathcal{Y}}|w|^p\mu(\mathrm{d}w)\right)\right)\\ &\geq V^{\sigma}(\nu_{\sigma}^*, \mu_{\sigma}^*) + \frac{\sigma^2}{2}\operatorname{D_{KL}}(\mu_{\sigma}|\rho)\\ &\geq V^{\sigma}(\nu_{\sigma}^*, \mu_{\sigma}) + \frac{\sigma^2}{2}\operatorname{D_{KL}}(\mu_{\sigma}|\rho)\\ &= F(\nu_{\sigma}^*, \mu_{\sigma}) + \frac{\sigma^2}{2}\operatorname{D_{KL}}(\nu_{\sigma}^*|\pi) \geq F(\nu_{\sigma}^*, \mu_{\sigma}) \geq \inf_{\nu \in \mathcal{P}_p(\mathcal{X})} F(\nu, \mu_{\sigma}),
\end{align*}
where the first inequality follows from \eqref{eq:upper-bound-mollifier-mu}, the second inequality follows since $(\nu_{\sigma}^*, \mu_{\sigma}^*) \in \mathcal{P}_{p}(\mathcal{X}) \times \mathcal{P}_{p}(\mathcal{Y})$ is the saddle point of $V^{\sigma},$ and the third inequality follows since $\operatorname{D_{KL}}(\cdot|\pi) \geq 0.$ By \cite[Lemma 5.2]{santambrogio2015optimal}, we have $\mu_{\sigma} \to \mu$ in $\mathcal{W}_p$ as $\sigma \to 0,$ and since $F$ is jointly $\mathcal{W}_p$-continuous, it follows by taking liminf as $\sigma \to 0,$
\begin{equation*}
    \liminf_{\sigma \to 0}  V^{\sigma}(\nu_{\sigma}^*, \mu_{\sigma}^*) \geq \inf_{\nu \in \mathcal{P}_p(\mathcal{X})} F(\nu, \mu), \text{ for all } \mu \in \mathcal{P}_p(\mathcal{Y}).
\end{equation*}
Hence,
\begin{equation}
\label{eq:liminf-bound}
    \liminf_{\sigma \to 0}  V^{\sigma}(\nu_{\sigma}^*, \mu_{\sigma}^*) \geq \sup_{\mu \in \mathcal{P}_p(\mathcal{Y})} \inf_{\nu \in \mathcal{P}_p(\mathcal{X})} F(\nu, \mu).
\end{equation}
Let $\nu \in \mathcal{P}_p(\mathcal{X}).$ Then, we have 
\begin{align*}
    V^{\sigma}(\nu_{\sigma}^*, \mu_{\sigma}^*) &\leq V^{\sigma}(\nu_{\sigma}, \mu_{\sigma}^*)\\ &\leq F(\nu_{\sigma}, \mu_{\sigma}^*) + \frac{\sigma^2}{2}\operatorname{D_{KL}}(\nu_{\sigma}|\pi)\\
    &\leq F(\nu_{\sigma}, \mu_{\sigma}^*) + \frac{\sigma^2}{2}\left(\int_{\mathcal{X}} h(f(x))\mathrm{d}x - d\log \sigma + C_{\pi} \left(1 + \int_{\mathcal{X}}|z|^p\nu(\mathrm{d}z)\right)\right)\\
    &\leq \sup_{\mu \in \mathcal{P}_p(\mathcal{Y})} F(\nu_{\sigma}, \mu) + \frac{\sigma^2}{2}\left(\int_{\mathcal{X}} h(f(x))\mathrm{d}x - d\log \sigma + C_{\pi} \left(1 + \int_{\mathcal{X}}|z|^p\nu(\mathrm{d}z)\right)\right),
\end{align*}
where the first inequality follows since $(\nu_{\sigma}^*, \mu_{\sigma}^*) \in \mathcal{P}_{p}(\mathcal{X}) \times \mathcal{P}_{p}(\mathcal{Y})$ is the saddle point of $V^{\sigma},$ the second inequality follows since $-\operatorname{D_{KL}}(\cdot|\rho) \leq 0,$ and the third inequality follows from \eqref{eq:upper-bound-mollifier-nu}. By \cite[Lemma 5.2]{santambrogio2015optimal}, we have $\nu_{\sigma} \to \nu$ in $\mathcal{W}_p$ as $\sigma \to 0,$ and since $F$ is jointly $\mathcal{W}_p$-continuous, it follows by taking limsup as $\sigma \to 0,$
\begin{equation*}
    \limsup_{\sigma \to 0}  V^{\sigma}(\nu_{\sigma}^*, \mu_{\sigma}^*) \leq \sup_{\mu \in \mathcal{P}_p(\mathcal{Y})} F(\nu, \mu), \text{ for all } \nu \in \mathcal{P}_p(\mathcal{X}).
\end{equation*}
Hence,
\begin{equation}
\label{eq:limsup-bound}
    \limsup_{\sigma \to 0}  V^{\sigma}(\nu_{\sigma}^*, \mu_{\sigma}^*) \leq \inf_{\nu \in \mathcal{P}_p(\mathcal{X})} \sup_{\mu \in \mathcal{P}_p(\mathcal{Y})}  F(\nu, \mu).
\end{equation}
Combining \eqref{eq:liminf-bound} and \eqref{eq:limsup-bound}, we obtain
\begin{equation*}
    \sup_{\mu \in \mathcal{P}_p(\mathcal{Y})} \inf_{\nu \in \mathcal{P}_p(\mathcal{X})} F(\nu, \mu) \leq \liminf_{\sigma \to 0}  V^{\sigma}(\nu_{\sigma}^*, \mu_{\sigma}^*) \leq \limsup_{\sigma \to 0}  V^{\sigma}(\nu_{\sigma}^*, \mu_{\sigma}^*) \leq \inf_{\nu \in \mathcal{P}_p(\mathcal{X})} \sup_{\mu \in \mathcal{P}_p(\mathcal{Y})}  F(\nu, \mu)
\end{equation*}
Since $F$ admits a saddle point $(\nu^*, \mu^*)$ on $\mathcal{P}_p(\mathcal{X}) \times \mathcal{P}_p(\mathcal{Y}),$ we have
\begin{equation*}
    \inf_{\nu \in \mathcal{P}_p(\mathcal{X})} \sup_{\mu \in \mathcal{P}_p(\mathcal{Y})} F(\nu,\mu) = \sup_{\mu \in \mathcal{P}_p(\mathcal{Y})} \inf_{\nu \in \mathcal{P}_p(\mathcal{X})} F(\nu,\mu) = F(\nu^*, \mu^*),
\end{equation*}
and therefore
\begin{equation*}
    \liminf_{\sigma \to 0}  V^{\sigma}(\nu_{\sigma}^*, \mu_{\sigma}^*) = \limsup_{\sigma \to 0}  V^{\sigma}(\nu_{\sigma}^*, \mu_{\sigma}^*) = F(\nu^*, \mu^*),
\end{equation*}
hence the conclusion follows.
\end{proof}

\section{Notation and definitions}
\label{app: AppB}
In this section we recall some important definitions. Following \cite[Definition $5.43$]{Carmona2018ProbabilisticTO}, 
we start with the notion of differentiability on the space of probability measure that we utilize throughout the paper.
\begin{definition}
\label{def:fderivative} 
Fix $p \geq 0.$ For any $\mathcal{M} \subseteq \mathbb R^d,$ let $\mathcal{P}_p(\mathcal{M})$ be the space of probability measures on $\mathcal{M}$ with finite $p$-th moments. A function $F:\mathcal P_p(\mathcal{M}) \to \mathbb R$ admits first-order flat derivative on $\mathcal P_p(\mathcal{M}),$ if there exists a function $\frac{\delta F}{\delta \nu}: \mathcal P_p(\mathcal{M}) \times \mathcal{M}\rightarrow \mathbb R,$ such that
\begin{enumerate}
\item the map $\mathcal P_p(\mathcal{M}) \times \mathcal{M} \ni (m, x) \mapsto \frac{\delta F}{\delta m}(m,x)$ is jointly continuous with respect to the product topology, where $\mathcal P_p(\mathcal{M})$ is endowed with the weak topology,
\item for any $m \in \mathcal P_p(\mathcal{M}),$ there exists $C>0$ such that, for all $x\in \mathcal{M},$ we have 
\[
\left|\frac{\delta F}{\delta m}(m,x)\right|\leq C\left(1+|x|^p\right),
\]
\item for all $m, m' \in \mathcal P_p(\mathcal{M}),$ it holds that
\begin{equation}
\label{def:FlatDerivative}
F(m')-F(m)=\int_{0}^{1}\int_{\mathcal{M}}\frac{\delta F}{\delta m}(m + \varepsilon (m'-m),x)\left(m'- m\right)(\mathrm{d}x)\mathrm{d}\varepsilon.
\end{equation}
\end{enumerate}
The functional $\frac{\delta F}{\delta m}$ is then called the flat derivative of $F$ on $\mathcal P_p(\mathcal{M}).$ We note that $\frac{\delta F}{\delta m}$ exists up to an additive constant, and thus we make the normalizing convention $\int_{\mathcal{M}} \frac{\delta F}{\delta m}(m,x) m(\mathrm{d}x) = 0.$
\end{definition}
If, for any fixed $x \in \mathcal{M},$ the map $m \mapsto \frac{\delta F}{\delta m}(m,x)$ satisfies Definition \ref{def:fderivative}, we say that $F$ admits a second-order flat derivative denoted by $\frac{\delta^2 F}{\delta m^2}.$ Consequently, by Definition \ref{def:fderivative}, there exists a functional $\frac{\delta^2 F}{\delta m^2}: \mathcal P_p(\mathcal{M}) \times\mathcal{M} \times\mathcal{M} \rightarrow \mathbb R$ such that
\begin{equation}
\label{def:2FlatDerivative}
\frac{\delta F}{\delta m}(m',x)-\frac{\delta F}{\delta m}(m,x) = \int_{0}^{1}\int_{\mathcal{M}}\frac{\delta^2 F}{\delta m^2}(\nu + \varepsilon (m'-m),x, x')\left(m'- m\right)(\mathrm{d}x')\mathrm{d}\varepsilon.
\end{equation}
Now, we show a measure-space equivalent of Schwarz’s theorem on symmetry of second order flat derivatives.
\begin{lemma}[Symmetry of second order flat derivatives]
\label{lemma: symmetry-flat}
Let $\mathcal{X}, \mathcal{Y} \subseteq \mathbb R^d.$ Assume $F:\mathcal{P}(\mathcal{X}) \times \mathcal{P}(\mathcal{Y}) \to \mathbb R$ admits second order flat derivative and the maps $(\nu,\mu,y,x) \mapsto \frac{\delta^2 F}{\delta \nu \delta \mu} (\nu, \mu, y, x), (\nu,\mu,x,y) \mapsto \frac{\delta^2 F}{\delta \mu \delta \nu} (\nu, \mu, x, y)$ are jointly continuous in all variables. Then we have
 \begin{equation*}
        \int_{\mathcal{Y}} \int_{\mathcal{X}} \frac{\delta^2 F}{\delta \mu \delta \nu} (\nu, \mu, x, y) (\nu'-\nu)(\mathrm{d}x)(\mu'-\mu)(\mathrm{d}y) = \int_{\mathcal{X}} \int_{\mathcal{Y}} \frac{\delta^2 F}{\delta \nu \delta \mu} (\nu, \mu, y, x) (\mu'-\mu)(\mathrm{d}y)(\nu'-\nu)(\mathrm{d}x).
    \end{equation*}
\end{lemma}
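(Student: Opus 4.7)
The plan is to reduce the symmetry of second-order flat derivatives to the classical Schwarz theorem for functions of two real variables by considering suitable linear paths of measures.

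Specifically, fix $(\nu,\mu),(\nu',\mu')\in\mathcal{P}(\mathcal{X})\times\mathcal{P}(\mathcal{Y})$ and define
\[
G:[0,1]^2\to\mathbb{R},\qquad G(\varepsilon,\eta):=F\bigl(\nu+\varepsilon(\nu'-\nu),\,\mu+\eta(\mu'-\mu)\bigr).
\]
Using the integral characterisation \eqref{def:FlatDerivative} of the first-order flat derivative applied separately in the $\nu$- and $\mu$-slots (and the assumed joint continuity of $(\nu,\mu,x)\mapsto\frac{\delta F}{\delta\nu}$ and $(\nu,\mu,y)\mapsto\frac{\delta F}{\delta\mu}$), I would first show that $G$ is $C^1$ with
\[
\partial_\varepsilon G(\varepsilon,\eta)=\int_{\mathcal{X}}\frac{\delta F}{\delta\nu}(\nu_\varepsilon,\mu_\eta,x)(\nu'-\nu)(\mathrm{d}x),\qquad \partial_\eta G(\varepsilon,\eta)=\int_{\mathcal{Y}}\frac{\delta F}{\delta\mu}(\nu_\varepsilon,\mu_\eta,y)(\mu'-\mu)(\mathrm{d}y),
\]
where $\nu_\varepsilon:=\nu+\varepsilon(\nu'-\nu)$ and $\mu_\eta:=\mu+\eta(\mu'-\mu)$.

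Next, applying \eqref{def:2FlatDerivative} to the map $\eta\mapsto\frac{\delta F}{\delta\nu}(\nu_\varepsilon,\mu_\eta,x)$ and to $\varepsilon\mapsto\frac{\delta F}{\delta\mu}(\nu_\varepsilon,\mu_\eta,y)$, together with differentiation under the integral sign (justified by the joint continuity of the two second-order derivatives assumed in the hypothesis, plus the standard growth bound in Definition \ref{def:fderivative} used to produce an integrable majorant against the signed measures $\nu'-\nu$ and $\mu'-\mu$), I would obtain
\begin{align*}
\partial_\eta\partial_\varepsilon G(\varepsilon,\eta) &=\int_{\mathcal{Y}}\!\int_{\mathcal{X}}\frac{\delta^2 F}{\delta\mu\delta\nu}(\nu_\varepsilon,\mu_\eta,x,y)\,(\nu'-\nu)(\mathrm{d}x)(\mu'-\mu)(\mathrm{d}y),\\
\partial_\varepsilon\partial_\eta G(\varepsilon,\eta) &=\int_{\mathcal{X}}\!\int_{\mathcal{Y}}\frac{\delta^2 F}{\delta\nu\delta\mu}(\nu_\varepsilon,\mu_\eta,y,x)\,(\mu'-\mu)(\mathrm{d}y)(\nu'-\nu)(\mathrm{d}x).
\end{align*}

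The joint continuity hypothesis on $\frac{\delta^2 F}{\delta\mu\delta\nu}$ and $\frac{\delta^2 F}{\delta\nu\delta\mu}$, combined with continuity of $(\varepsilon,\eta)\mapsto(\nu_\varepsilon,\mu_\eta)$ in the weak topology, ensures that both mixed partials above are continuous functions of $(\varepsilon,\eta)$ on $[0,1]^2$. The classical Schwarz theorem for functions of two real variables then gives $\partial_\eta\partial_\varepsilon G=\partial_\varepsilon\partial_\eta G$ on $[0,1]^2$. Evaluating this equality at $(\varepsilon,\eta)=(0,0)$ yields exactly the claimed identity.

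The main obstacle will be the rigorous justification of differentiation under the integral sign at the second-order level: one needs a uniform (in $\varepsilon,\eta$) integrable bound on the second-order flat derivatives against $(\nu'-\nu)\otimes(\mu'-\mu)$, which I would obtain from the growth bound in Definition \ref{def:fderivative} together with the fact that $\nu_\varepsilon,\mu_\eta$ stay in $\mathcal{P}(\mathcal{X})\times\mathcal{P}(\mathcal{Y})$ and the signed measures $\nu'-\nu$, $\mu'-\mu$ have finite total variation. Beyond that, every step is a direct application of the definitions and the classical Clairaut--Schwarz theorem.
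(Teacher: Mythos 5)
Your proposal is correct and follows essentially the same route as the paper's proof: both define the bivariate map $(s,t)\mapsto F\bigl(\nu+s(\nu'-\nu),\mu+t(\mu'-\mu)\bigr)$, identify its mixed partial derivatives with the double integrals of the second-order flat derivatives, and invoke the classical Schwarz theorem before evaluating at $(0,0)$. Your version merely spells out the differentiation-under-the-integral and continuity justifications that the paper leaves implicit.
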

\begin{proof}
    Let $(\nu,\mu), (\nu',\mu') \in \mathcal{P}(\mathcal{X}) \times \mathcal{P}(\mathcal{Y}).$ Then $\nu+s(\nu'-\nu)$ and $\mu+t(\mu'-\mu)$ are probability measures for all $(s,t) \in [0,1] \times [0,1].$ Since $F$ admits second order flat derivatives, it follows that the map $f: [0,1] \times [0,1] \ni (s,t) \mapsto F(\nu+s(\nu'-\nu), \mu+t(\mu'-\mu))$ is twice differentiable, and so
    \begin{equation*}
        \partial_t \partial_s f(s,t) = \int_{\mathcal{Y}} \int_{\mathcal{X}} \frac{\delta^2 F}{\delta \mu \delta \nu} (\nu+s(\nu'-\nu), \mu+t(\mu'-\mu), x, y) (\nu'-\nu)(\mathrm{d}x)(\mu'-\mu)(\mathrm{d}y),
    \end{equation*}
    \begin{equation*}
        \partial_s \partial_t f(s,t) = \int_{\mathcal{X}} \int_{\mathcal{Y}} \frac{\delta^2 F}{\delta \nu \delta \mu} (\nu+s(\nu'-\nu), \mu+t(\mu'-\mu), y, x) (\mu'-\mu)(\mathrm{d}y)(\nu'-\nu)(\mathrm{d}x).
    \end{equation*}
    By Schwarz's Theorem, we have $\partial_t \partial_s f(s,t) =  \partial_s \partial_t f(s,t),$ for all $(s,t) \in [0,1] \times [0,1],$ and then setting $s=t=0$ gives
    \begin{equation*}
        \int_{\mathcal{Y}} \int_{\mathcal{X}} \frac{\delta^2 F}{\delta \mu \delta \nu} (\nu, \mu, x, y) (\nu'-\nu)(\mathrm{d}x)(\mu'-\mu)(\mathrm{d}y) = \int_{\mathcal{X}} \int_{\mathcal{Y}} \frac{\delta^2 F}{\delta \nu \delta \mu} (\nu, \mu, y, x) (\mu'-\mu)(\mathrm{d}y)(\nu'-\nu)(\mathrm{d}x).
    \end{equation*}
\end{proof}

\begin{definition}[TV distance between probability measures; \cite{tsybakov2008introduction}, Definition $2.4$]
\label{def:KRwasserstein}
    Let $\left(\mathcal{M}, \mathcal{A}\right)$ be a measurable space and let $P$ and $Q$ be probability measures on $\left(\mathcal{M}, \mathcal{A}\right)$. Assume that $\mu$ is a $\sigma$-finite measure on $\left(\mathcal{M}, \mathcal{A}\right)$ such that $P$ and $Q$ are absolutely continuous with respect to $\mu$ and let $p$ and $q$ denote their probability density functions, respectively. The total variation distance between $P$ and $Q$ is defined as:
    \begin{equation*}
        \operatorname{TV}(P,Q) \coloneqq \sup_{A \in \mathcal{A}}\left|P(A) - Q(A)\right| = \sup_{A \in \mathcal{A}} \left|\int_{A} (p-q) \mathrm{d}\mu\right|.
    \end{equation*}
\end{definition}
\end{document}